\newtheorem{theorem}{Theorem}[section]
\newtheorem{lemma}[theorem]{Lemma}
\newtheorem{proposition}[theorem]{Proposition}
\newtheorem{definition}[theorem]{Definition}
\newtheorem{assumption}[theorem]{Assumption}
\newtheorem{remark}[theorem]{Remark}
\newtheorem{notation}[theorem]{Notation}
\newtheorem{example}[theorem]{Example}
\newcommand{\bfF}{{\bf F}}
\newcommand{\bfV}{{\bf V}}
\newcommand{\Gal}{\operatorname{Gal}}
\newcommand{\ds}{\displaystyle}
\newcommand{\Tr}{\operatorname{Tr}}
\newcommand{\Sel}{\operatorname{Sel}}
\newcommand\Selr{\Sel_{\text{rel}}}
\newcommand{\Hom}{\operatorname{Hom}}
\newcommand\rat{{\mathbb Q}}
\newcommand\Col{\operatorname{Col}}
\newcommand\rank{\operatorname{rank}}
\newcommand\corank{\operatorname{corank}}
\newcommand\Proj{\operatorname{Proj}}
\newcommand\Qp{{\mathbb Q_p}}
\newcommand\Zp{{\mathbb Z_p}}
\newcommand\Z{\mathbb Z}
\newcommand{\bfT}{\mathbf T}
\newcommand{\bfA}{\mathbf A}
\newcommand{\Dieudonne}{\mathbf D}
\newcommand{\Dieu}{\Dieudonne}
\newcommand\mfp{\mathfrak p}
\newcommand\mm{\mathfrak m}
\newcommand\bfP{\mathbf P}
\newcommand\bfx{\mathbf x}
\newcommand\bfy{\mathbf y}
\newcommand\bfm{\mathbf m}
\newcommand\bfl{\mathbf l}
\newcommand\Q{\mathbb Q}
\newcommand\C{\mathbb C}
\newcommand\bfL{\mathbf L}
\newcommand\bfLalg{\bfL_{alg}}
\newcommand\OO{\mathcal O}
\newenvironment{nouppercase}{%
  \renewcommand{\uppercasenonmath}[1]{}}{}
\DeclareSymbolFont{cyrletters}{OT2}{wncyr}{m}{n}
\DeclareMathSymbol{\Sha}{\mathalpha}{cyrletters}{"58}
\newcommand\Label{\label}
\title[Rational Points of Abelian Varieties with Non-Ordinary Reduction]{Ranks of the Rational Points of Abelian Varieties over Ramified Fields, and Iwasawa Theory for Primes with Non-Ordinary Reduction}
\author[B. D. Kim]{Byoung Du (B. D.) Kim}
\address{Victoria University of Wellington, Wellington, New Zealand}
\email{byoungdu.kim@vuw.ac.nz}
\begin{document}

\begin{nouppercase}
\maketitle
\end{nouppercase}

\begin{abstract}
Let $A$ be an abelian variety defined over a number field $F$. Suppose its dual abelian variety $A'$ has good non-ordinary reduction at the primes above $p$. Let $F_{\infty}/F$ be a $\Zp$-extension, and for simplicity, assume that there is only one prime $\mfp$ of $F_{\infty}$ above $p$, and $F_{\infty, \mfp}/\Qp$ is totally ramified and abelian. (For example, we can take $F=\Q(\zeta_{p^N})$ for some $N$, and $F_{\infty}=\Q(\zeta_{p^{\infty}})$.)  As Perrin-Riou did in \cite{Perrin-Riou-1}, we use Fontaine's theory (\cite{Fontaine}) of group schemes to construct series of points over each $F_{n, \mfp}$ which satisfy norm relations associated to the Dieudonne module of $A'$ (in the case of elliptic curves, simply the Euler factor at $\mfp$), and use these points to construct characteristic power series $\bfL_{\alpha} \in \Qp[[X]]$ analogous to Mazur's characteristic polynomials in the case of good ordinary reduction. By studying $\bfL_{\alpha}$, we obtain a weak bound for $\rank E(F_n)$.

In the second part, we establish a more robust Iwasawa Theory for elliptic curves, and find a better bound for their ranks under the following conditions: Take an elliptic curve $E$ over a number field $F$. The conditions for $F$ and $F_{\infty}$ are the same as above. Also as above, we assume $E$ has supersingular reduction at $\mfp$. We discover that we can construct series of local points which satisfy finer norm relations under some conditions related to the logarithm of $E/F_{\mfp}$. Then, we apply Sprung's (\cite{Sprung}) and Perrin-Riou's insights to construct \textit{integral} characteristic polynomials $\bfLalg^{\sharp}$ and $\bfLalg^{\flat}$. One of the consequences of this construction is that if $\bfLalg^{\sharp}$ and $\bfLalg^{\flat}$ are not divisible by a certain power of $p$, then $E(F_{\infty})$ has a finite rank modulo torsions.
\end{abstract}
\tableofcontents

\begin{section}{Introduction}
A good place to start our discussion is Mazur's influential work on the rational points of abelian varieties over towers of number fields (\cite{Mazur}). Suppose $A$ is an abelian variety over a number field $F$, $A$ has good ordinary reduction at every prime above $p$, and $F_{\infty}$ is a $\Zp$-extension of $F$ (i.e., $\Gal(F_{\infty}/F) \cong \Zp$). First, he established the Control Theorem for $\Sel_p(A[p^{\infty}]/F_n)$'s (meaning he showed that the natural map $\Sel_p(A[p^{\infty}]/F_n)\to \Sel_p(A[p^{\infty}]/F_{\infty})^{\Gal(F_{\infty}/F_n)}$ has bounded kernel and cokernel as $n$ varies), and second, he demonstrated the existence of the characteristic polynomial $f(F_{\infty}/F, A)$ of $\Sel_p(A[p^{\infty}]/F_{\infty})$. (Any attempt to reduce his immense work to two sentences should be resisted, and readers should understand that the author is only trying to describe how his work has influenced this paper.) 

It means that we can use powerful tools of Iwasawa Theory. For example, if $f(F_{\infty}/F, A)\not=0$ (which is true if $A(F_n)^{\chi_n}$ and the $\chi_n$-part of the Shafarevich-Tate group $\Sha(A/F_n)[p^{\infty}]^{\chi_n}$ are finite for any $n\geq 0$ and any character $\chi_n$ of $\Gal(F_n/F)$), then $A(F_{\infty})$ has a finite rank modulo torsions. (Torsions over $F_{\infty}$ are often finite.)

Regarding the rank of $A(F_{\infty})$, now we have a stronger result for elliptic curves over $\Q$ by Kato (\cite{Kato}). However, we want to emphasize that Mazur's work and Kato's work have different goals and strengths.

Can we establish a result analogous to Mazur's for abelian varieties with good \textit{non-ordinary} reduction at primes above $p$? (See Section~\ref{Reduction} for the discussion about reduction types. We will not treat bad reduction primes, which seem to require a very different approach except for multiplicative reduction primes.)

The answer is that it is not easy to do Mazur's work directly for non-ordinary reduction primes. The main problem seems to be that the local universal norms are trivial when the primes are non-ordinary.

One of the more successful strategies to overcome this difficulty is to construct a series of local points which satisfy certain norm relations associated with the Euler factor $X^2-a_p(E)X+p$. Rubin introduced the idea of $\pm$-Selmer groups of elliptic curves (\cite{Rubin}). His method was to use the Heegner points as local points. Perrin-Riou (\cite{Perrin-Riou-1}) invented a way to construct such local points purely locally using Fontaine's theory of formal group schemes (\cite{Fontaine}). Her brilliant idea was all but forgotten for a long time, but are getting more influential recently. (And, this paper owes much to her work.)

More recently, Kobayashi (\cite{Kobayashi}) also constructed such local points of elliptic curves using a more explicit method, and demonstrated the potential that the theory for supersingular reduction primes can be as good as the theory for ordinary reduction primes.

Kobayashi assumed $a_p(E)=0$ for an elliptic curve $E$ defined over $\Q$ (which is automatically true by the Hasse inequality if $E$ has good supersingular reduction at $p$ and $p>3$). Sprung introduced a new idea, what he calls $\sharp/\flat$-Selmer groups for elliptic curves, which does not require $a_p(E)=0$ (\cite{Sprung}). His work has particular relevance to this paper because we are interested in the abelian varieties and elliptic curves over ramified fields. Even when we assume $a_p(E)=0$ or an equivalent condition, the assocaited formal groups behave as if $a_p$ is not 0 because the fields are ramified. We will make much use of his idea of the $\sharp/\flat$-decomposition in the second part.

Whereas our predecessors were concerned with abelian varieties over $\Q$ (and therefore formal groups defined over $\Qp$), we are concerned with abelian varieties defined over fields whose primes above $p$ are ramified, which present new difficulties.

First (Section~\ref{Case 1}), we take an abelian variety $A$ over a number field $F$, and let $A'$ be its dual abelian variety. For simplicity, we assume there is only one prime $\mfp$ of $F$ above $p$, and it is totally ramified over $F/\Q$. We assume $A'$ has good reduction at $\mfp$. Suppose $F_{\infty}$ is a $\Zp$-extension of $F$ such that $\mfp$ is totally ramified over $F_{\infty}/F$, and $F_{\infty, \mfp}/\Qp$ is abelian. For example, take $F =\Q(\zeta_{p^N})$ for some $N$, and $F_{\infty}=\Q (\zeta_{p^{\infty}})$.

Suppose $A'/F_{\mfp}$ has dimension $1$. (Generalizing to higher dimensions may not be very hard.) Let $H^{\vee}(X)=X^d+pb_1X^{d-1}+p^2b_2X^{d-2}+\cdots+p^db_d$ be the characteristic polynomial of the Verschiebung $\bfV$ acting on the Dieudonne module. For example, for an elliptic curve, that is simply $X^2-a_p(E)X+p$. Suppose that $A'(F_{\infty, \mfp})_{tor}$ is annihilated by some $M'>0$. Then, we construct points $Q(\pi_{N+n}) \in A'(F_{n, \mfp})$ such that we have

$$ \Tr_{F_{n, \mathfrak p}/F_{n-d, \mathfrak p}} Q(\pi_{N+n}) = \sum_{i=1}^d -p^i\cdot b_i \Tr_{F_{n-i, \mathfrak p}/F_{n-d, \mathfrak p}} Q(\pi_{N+n-i}).
$$

Fontaine's theory of finite group schemes (\cite{Fontaine}) is instrumental in our construction, as it is in Perrin-Riou's work (\cite{Perrin-Riou-1}). As Perrin-Riou does, for each root $\alpha$ of $H^{\vee}(X)$ with $v_p(\alpha)<1$, we can construct a characteristic power series $\bfL_{\alpha}(X)\in \Qp[[X]]$ which is analogous to Mazur's characteristic polynomial $f(F_{\infty}/F, A)$ except that it is not an integral power series unless $v_p(\alpha)=0$.

Then, we can obtain the following bound for the coranks of the Selmer groups (and thus, for the ranks of $A(F_n)$):

\begin{theorem}[Proposition~\ref{ZeroGo}]
Let $\lambda=v_p(\alpha)$.

\begin{enumerate}
\item If $\bfL_{\alpha}\not=0$, then
\[ \corank_{\Zp} \Sel_p(A[p^{\infty}]/F_n) \leq e(p-1) \times \left\{ p^{n-1}+p^{n-2}+ \cdots+ p^m \right\}+O(1)\]
where $n-m = \lambda n +O(1)$.

\item
If any root $\alpha$ of $H^{\vee}(X)$ has valuation 0 (i.e., if $A'$ has ``in-between'' reduction or ordinary reduction), then $\corank_{\Zp}(\Sel_p(A[p^{\infty}]/F_n))$ is bounded by the number of roots of $\bfL_{\alpha}$.
\end{enumerate}
\end{theorem}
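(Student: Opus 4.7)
The plan is to adapt Perrin-Riou's strategy \cite{Perrin-Riou-1} to the ramified setting. First, using the norm-compatible points $Q(\pi_{N+n})$ and a root $\alpha$ with $v_p(\alpha) < 1$, I would construct a Coleman-type map $\Col_\alpha$ from the inverse limit of the appropriate eigen-component of $A'(F_{n,\mfp})$ into $\Qp[[X]]$, whose image of a distinguished universal norm system yields the power series $\bfL_\alpha$. Its kernel provides a modified local condition at $\mfp$, and the resulting ``$\alpha$-Selmer'' group $\Sel_\alpha(A[p^\infty]/F_\infty)$ has $\bfL_\alpha$ as the characteristic power series of its Pontryagin dual.

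Next, I would establish a Control Theorem: the restriction $\Sel_\alpha(A[p^\infty]/F_n) \to \Sel_\alpha(A[p^\infty]/F_\infty)^{\Gamma_n}$ has kernel and cokernel of bounded size, where $\Gamma_n = \Gal(F_\infty/F_n)$. This reduces the corank of $\Sel_\alpha(A[p^\infty]/F_n)$ to summing, over characters $\chi$ of $\Gal(F_n/F)$, the $p$-adic valuations $v_p(\bfL_\alpha(\chi))$. When $v_p(\alpha) = 0$, $\bfL_\alpha$ lies in $\Lambda = \Zp[[X]]$, so by Weierstrass preparation it has finitely many zeros in the open unit disk, giving part (2): the corank is bounded uniformly by the number of roots. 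For part (1), when $\lambda = v_p(\alpha) > 0$, the denominators of $\bfL_\alpha$ cause an additional valuation loss of at most $\lambda k + O(1)$ for a character $\chi$ of exact order $p^k$; since there are roughly $e(p-1)p^{k-1}$ such characters, summing from the critical level $m$ (where $n-m = \lambda n + O(1)$) up to $n-1$ produces the geometric bound $e(p-1)(p^{n-1}+\cdots+p^m)$.

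To pass from $\Sel_\alpha$ to the classical $\Sel_p(A[p^\infty]/F_n)$, the two groups differ only in the local condition at $\mfp$, so their coranks differ by at most the $\Zp$-corank of the relevant local cokernel. Using the Fontaine-theoretic construction of $Q(\pi_{N+n})$ together with the torsion bound $M'$ on $A'(F_{\infty,\mfp})_{tor}$, this defect is absorbed into the $O(1)$ term, completing the bound for the classical Selmer corank.

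The main obstacle is the Control Theorem: the $\alpha$-local condition is defined via an inverse system and is not preserved verbatim under restriction to finite levels, so one must analyze carefully how the points $Q(\pi_{N+n})$ sit inside the local Kummer image at each finite layer, leaning on their Dieudonne-theoretic origin. The ramification of $F_{\infty,\mfp}/\Qp$ enters through the factor $e(p-1)$, which measures the $\Qp$-dimension of each character eigenspace after accounting for wild ramification. Keeping the error terms uniformly $O(1)$ while handling the non-integrality of $\bfL_\alpha$ in the case $\lambda > 0$ is the delicate technical heart of the argument.
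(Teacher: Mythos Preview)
Your approach takes a much more elaborate route than the paper, and it contains a genuine error. The paper does not construct any $\alpha$-Selmer group or prove a control theorem for one. Instead it works with the relaxed Selmer group $\Selr$ (whose dual has $\Lambda$-rank exactly $e$), from which two facts are already in hand before the proposition: (i) for every primitive character $\chi_n$ of $\Gal(F_n/F)$ one has the unconditional bound $\corank_{\Zp[\zeta_{p^n}]} \Sel_p(\bfA/F_n)^{\chi_n} \leq e$; and (ii) if $\Sel_p(\bfA/F_n)^{\chi_n}$ is infinite then $\bfL_\alpha(\zeta_{p^n}-1)=0$. Given these, Perrin-Riou's Lemma~5.2 (a nonzero element of $\Lambda_\alpha$ can vanish on all primitive $p^k$-th roots for at most $\lambda n + O(1)$ values of $k \leq n$) finishes the proof in two lines: the worst case puts those $\approx \lambda n$ bad levels at the top, $k=n, n-1,\ldots,m+1$, each contributing at most $e(p-1)p^{k-1}$ to the $\Zp$-corank. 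Part~(2) is immediate because for $\lambda=0$ the series $\bfL_\alpha$ is integral.

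Your step ``the corank of $\Sel_\alpha(A[p^\infty]/F_n)$ reduces to summing $v_p(\bfL_\alpha(\chi))$'' is incorrect: summing $p$-adic valuations of special values controls the $p$-part of the \emph{order} of a finite module, not its corank. Corank is governed by \emph{vanishing} of $\bfL_\alpha(\chi)$, and your subsequent remark about ``valuation loss of at most $\lambda k$ from denominators'' does not convert into information about where $\bfL_\alpha$ vanishes. Even if you replaced $v_p$ by the order of vanishing, you would still need to bound the number of levels at which $\bfL_\alpha$ can vanish---and that is precisely Perrin-Riou's Lemma~5.2, which the paper invokes directly, without the detour through a new Selmer group, a Coleman map, or a control theorem.
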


We have $\bfL_{\alpha}\not=0$ if $\Sel_p(A[p^{\infty}]/F_n)^{\chi_n}$ is finite for any $n$ and any character $\chi_n$ of $\Gal(F_n/F)$. Also note that $\rank(A(F_n))$ is bounded by $\corank_{\Zp} \Sel_p(A[p^{\infty}]/F_n)$.

In addition, we construct similar local points over the extensions $F_{\mfp}(\sqrt[p^n]{\pi})$ ($n\geq 0$) for any uniformizer $\pi$ of $F_{\mfp}$ (Section~\ref{Kummer}). On one hand, this construction is fully general. On the other hand, since $\cup_n F_{\mfp}(\sqrt[p^n]{\pi})$ is not abelian over $F_{\mfp}$, it is not clear what we can do with it. (For instance, we cannot apply Iwasawa Theory to the points.)

Furthermore, assuming additional hypotheses, and with the crucial help of Sprung's insight, we can establish an Iwasawa Theory that is more closely aligned with Mazur's theory. In Section~\ref{Case 2}, we take an elliptic curve $E$ over $F$, and suppose $E$ has \textit{good supersingular reduction} at $\mfp$ (i.e., $a_{\mfp}(E)$ is not prime to $p$).

We choose a logarithm $\bfl$ of $E$ over $F_{\mfp}$ and a generator $\bfm$ of the Dieudonne module of $E$, and write

\[ \bfl=\alpha_1 \bfm+\alpha_2 \bfF\bfm\]
for some $\alpha_1, \alpha_2 \in F_{\mfp}$. We assume $p| \frac{\alpha_2}{\alpha_1}$ (Assumption~\ref{Assumption K}). Also we assume Assumption~\ref{Assumption L}, which is too technical to explain here, but is probably true in most cases.

One crucial step is that we modify our construction so that the resulting local points satisfy a finer norm relation (Proposition~\ref{Mark IV}). Another crucial step is that like Perrin-Riou, we construct $p$-adic characteristics, but this time, by applying an idea inspired by Sprung's insight of $\sharp/\flat$ (\cite{Sprung}), we construct integral $p$-adic characteristic polynomials $\bfLalg^{\sharp}(E), \bfLalg^{\flat}(E) \in \Lambda$. Since these are integral, they are more analogous to Mazur's characteristic $f(F_{\infty}/F, A)$, and it is likely that they have nice properties. They may not necessarily satisfy a control theorem in a literal sense, but nonetheless we manage to prove Proposition~\ref{DDT}, by which we can obtain the following.

\begin{theorem}[Theorem~\ref{DDR}]
Suppose $a_p$ and $\alpha$ are divisible by $p^T$ for some $T$, and neither $\bfLalg^{\sharp}(E)$ nor $\bfLalg^{\flat}(E)$ is divisible by $p^S$ for some $S$ with $S+\frac{[F:\Q]\times p}{(p-1)^2}<T$. Then, $E(F_{\infty})$ has a finite rank modulo torsions, and $\Sha(E/F_n)[p^{\infty}]^{\chi_n}$ is finite for all sufficiently large $n$ and primitive characters $\chi_n$ of $\Gal(F_n/F)$.
\end{theorem}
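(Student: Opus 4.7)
The plan is to derive Theorem~\ref{DDR} as a consequence of Proposition~\ref{DDT}, which is the main technical bridge relating the integral characteristic polynomials $\bfLalg^{\sharp}(E)$ and $\bfLalg^{\flat}(E)$ to the coranks of the Selmer groups. First I would invoke Proposition~\ref{DDT} to turn the hypothesis $p^S \nmid \bfLalg^{\sharp}(E),\, \bfLalg^{\flat}(E)$ into a bound, for each character $\chi_n$ of $\Gal(F_n/F)$, on $\corank_{\Zp} \Sel_p(E[p^{\infty}]/F_n)^{\chi_n}$, with an error controlled by $S$ together with the loss constant $\frac{[F:\Q] p}{(p-1)^2}$. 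Summing the isotypic contributions then bounds $\corank_{\Zp} \Sel_p(E[p^{\infty}]/F_n)$ uniformly in $n$.

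Next I would trace where the constant $\frac{[F:\Q] p}{(p-1)^2}$ enters. Its natural origin is the $p$-adic valuation of the formal group logarithm of $E/F_{\mfp}$ measured along the ramified tower: when one converts the finer norm relation of Proposition~\ref{Mark IV} through the $\sharp/\flat$-decomposition into classical Coleman data, one loses integrality by a factor whose total size is estimated by $\frac{[F:\Q] p}{(p-1)^2}$, with the $[F:\Q]$ coming from the absolute ramification, the $\frac{1}{(p-1)^2}$ from the typical denominator appearing in formal-group logarithms over ramified towers, and the extra $p$ from the basis change $\bfl = \alpha_1 \bfm + \alpha_2 \bfF\bfm$ controlled by Assumption~\ref{Assumption K}. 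The strict inequality $S + \frac{[F:\Q] p}{(p-1)^2} < T$ is chosen precisely so that this loss of integrality is strictly dominated by the divisibility $p^T \mid a_p, \alpha$, which is exactly what Proposition~\ref{DDT} needs to yield a genuine, non-vacuous bound.

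Finally, a uniform bound on $\corank_{\Zp} \Sel_p(E[p^{\infty}]/F_n)$ forces $\rank E(F_{\infty}) < \infty$ modulo torsion, since $\rank E(F_n) \leq \corank_{\Zp} \Sel_p(E[p^{\infty}]/F_n)$. The standard Selmer exact sequence
\[ 0 \to E(F_n) \otimes \D \to \Sel_p(E[p^{\infty}]/F_n) \to \Sha(E/F_n)[p^{\infty}] \to 0 \]
then forces $\Sha(E/F_n)[p^{\infty}]^{\chi_n}$ to be finite for all sufficiently large $n$ and all primitive characters $\chi_n$ of $\Gal(F_n/F)$, once both the Selmer corank and the Mordell--Weil rank have stabilized on each $\chi_n$-isotypic component.

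The hardest step, I expect, is the second one: verifying that Assumption~\ref{Assumption K} and the more technical Assumption~\ref{Assumption L}, combined with the finer norm relation of Proposition~\ref{Mark IV}, really guarantee that the integral $\sharp/\flat$-decomposition of the local points interacts with the Coleman maps with denominator no worse than $\frac{[F:\Q] p}{(p-1)^2}$. Making this constant \emph{sharp}, rather than merely finite, is exactly what turns the conclusion of Theorem~\ref{DDR} into an effective bound rather than a vacuous one, and it is where the ramified setting really complicates the analogy with Sprung's original $\sharp/\flat$-framework over $\Qp$.
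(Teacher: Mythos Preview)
Your core idea---deduce Theorem~\ref{DDR} from Proposition~\ref{DDT}---is exactly what the paper does; in fact the paper's entire proof is the phrase ``Then we immediately have.'' Proposition~\ref{DDT} already delivers, for all sufficiently large $n$ and all primitive $\chi$, the \emph{finiteness} (not merely a bound) of $E(F_n)^{\chi}$ and of $\Sha(E/F_n)[p^{\infty}]^{\chi}$. Since only finitely many isotypic components can then contribute to the rank, $E(F_{\infty})$ has finite rank modulo torsion, and the $\Sha$ statement is verbatim from~\ref{DDT}. Your first and last paragraphs capture this correctly.

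Your second and fourth paragraphs, however, do not belong in a proof of Theorem~\ref{DDR}: they attempt to justify the constant $\tfrac{ep}{(p-1)^2}$, but that constant is a hypothesis of Proposition~\ref{DDT}, and its provenance lies entirely inside the proof of~\ref{DDT}, not here. More importantly, your account of where it comes from is not right. It does not arise from valuations of the formal-group logarithm, from ``Coleman data,'' or from Assumption~\ref{Assumption K}. In the paper's proof of~\ref{DDT} one evaluates the matrix product
\[
\prod_{i=1}^{n-1}
\begin{bmatrix} p\tilde A_{N+i} & -\tilde A'_{N+i}\Phi_i \\ I_e & 0 \end{bmatrix}
\]
at $X=\zeta_{p^n}-1$; modulo $p^T$ this collapses to an (anti)diagonal matrix whose nonzero entries are products of the $\Phi_i(\zeta_{p^n}-1)$, and one has $\sum_{i\geq 1} v_p(\Phi_i(\zeta_{p^n}-1)) < \tfrac{p}{(p-1)^2}$. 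The factor $e=[F:\Q]$ then enters only because $\bfLalg^{\sharp/\flat}$ is the determinant of an $e\times e$ matrix. So the ``hardest step'' you anticipate is neither part of this proof nor correctly described; once you accept Proposition~\ref{DDT}, nothing further is needed.
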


\end{section}

\begin{section}{Reduction Types}		\Label{Reduction}
In this short section, we discuss reduction types. 

For elliptic curves, what good reduction, good ordinary reduction, and good supersingular reduction mean is clear. Suppose an elliptic curve $E$ is defined over a local field $K$. Then, we may suppose it has a minimal model over $\OO_K$. Let $\tilde E$ denote the reduced curve of the minimal model modulo $\mm_{\OO_K}$. We say $E$ has good reduction if $\tilde E$ is non-singular (i.e., smooth). Furthermore, we say $E$ has good ordinary reduction if $\tilde E$ is non-singular, and $\tilde E[p]$ is non-trivial, and has good supersingular reduction if $\tilde E$ is non-singular, and $\tilde E[p]$ is trivial. There are other equivalent definitions.

For general abelian varieties, it may be advantageous to use the Dieudonne modules to define reduction types. (There are other definitions, but the one using Dieudonne modules seems relatively simple.) Suppose $G$ is a formal group scheme over $\OO_K$ where $K$ is a local field. Let $G_{/k}$ be its reduction over the residue field $k$. If $G_{/k}$ is smooth, then we say $G$ has good reduction. Assume $G$ has good reduction, and let $M$ be its Dieudonne module $\hat{CW}(R_{G_{/k}})$ where $R_{G_{/k}}$ is the affine algebra that defines $G_{/k}$, and $\hat{CW}$ denotes the completion of the co-Witt vectors. (See \cite{Dieudonne-1}, \cite{Dieudonne-2}, \cite{Fontaine}, or Section~\ref{Fontaine}.) The Frobenius $\bfF$ and the Verschiebung $\bfV$ act on $M$ through $\hat{CW}$ with $\bfF\bfV=\bfV\bfF=p$.

Let $H(X)$ be the characteristic polynomial of $\bfF$ as action on $M$, i.e., $H(X)=\det (X\cdot 1_M-\bfF|M)$. Write

\[ H(X)=X^d+a_{d-1}X^{d-1}+\cdots+a_0.\]
Then, $\bfF$ is a topological nilpotent if and only if the roots of $H(X)$ are non-units.

Since $\bfF\bfV=p$, 
$$H^{\vee}(X) \stackrel{def}=X^d+p\frac{a_1}{a_0}X^{d-1}+p^2 \frac{a_2}{a_0}X^{d-2}+\cdots+p^{d-1} \frac{a_{d-1}}{a_0} X+p^d \frac 1{a_0}$$
is the characteristic polynomial of $\bfV$ as action on $M$. We define the following terminology we will use in this paper.

\begin{definition} 		\Label{Calais}
Assume $G$ has good reduction. Also assume $\bfF$ is a topological nilpotent. Recall that  $\bfV$ is a topological nilpotent if all the roots of $H^{\vee}(X)$ are non-units.
\begin{enumerate}
\item If all the roots of $H^{\vee}$ are units, then we say $G$ has ordinary reduction.
\item If $\bfV$ is a topological nilpotent (i.e., all the roots of $H^{\vee}$ are non-untis), then we say $G$ has supersingular reduction.

\item If some roots of $H^{\vee}$ are units and some are not, then we say $G$ has in-between reduction.
\end{enumerate}
\end{definition}

The last terminology is our own ad-hoc invention.

Definition~\ref{Calais} makes it clear that in this paper, we assume $\bfF$ is a topological nilpotent, but this condition is used only in a minor way, and when we use that assumption, we will mention it.

\end{section}

\begin{section}{Fontaine's functor for ramified extensions}	\Label{Fontaine}

Our primary reference is \cite{Fontaine}~Chapter~4. We will keep his notation wherever possible. Fontaine's book is out of print, and not many libraries have a copy. So, we will explain his work briefly.

Let 

\begin{enumerate}[(a)]
\item $K'$: an extension over $\Qp$ (possibly ramified),
\item $\mathcal O_{K'}$: its ring of integers,
\item $\mathfrak m$: its maximal ideal,
\item $e$: the ramification index of $K'$.
\end{enumerate}

Let $k$ be the residue field of $\OO_{K'}$, and let $K$ be the fractional field of $W=W(k)$, the set of Witt vectors of $k$. In other words, it is the maximal unramified extension of $\Qp$ contained in $K'$. Then, there is the $p$-th Frobenius $\sigma$ on $K$. We let

\[ \mathbf D_k\stackrel{def}=W[\bfF, \bfV] \]
where 

\begin{enumerate}[(a)]
\item $\bfF$ acts $\sigma$-linearly, and $\bfV$ acts $\sigma^{-1}$-linearly on $W$. In other words, $\bfF a=\sigma(a)$ and $\bfV a=\sigma^{-1}(a)$ where $a\in W$.

\item $\bfF\bfV=\bfV\bfF=p$
\end{enumerate}
If $K'$ is totally ramified so that $k=\mathbb F_p$, we drop $k$ from $\mathbf D_k$.

Suppose $G$ is a smooth finite-dimensional (commutative) formal group scheme over $\OO_{K'}$ such that $G_{/k}$ is smooth. Fontaine found a way to describe $G$ by linear algebra. More specifically, he can describe $G$ completely up to isogeny (or, up to isomorphism if $e<p-1$) by the Dieudonne module $M$, and the set $L$ of its ``logarithms'', and his description is given by expressing the points of $G$ by the linear algebra of $L$ and $M$. Together, $(L, M)$ is called the Honda system of $G$.

We briefly summarize Fontaine's work: Let $R$ be the affine algebra of $G$ (i.e., $G(g)\cong \Hom(R, g)$ for any algebra $g$ over $\OO_{K'}$ where $\Hom$ is the set of ring homomorphisms). Then, $R_k=R/\mm R$ is the affine algebra of the special fiber $G_{/k}$. Set

\[ M\stackrel{def}=\Hom(G_{/k}, \hat{CW}) \]
where $\hat{CW}$ is the functor of completed co-Witt vectors. Then, $M=\Hom(G_{/k}, \hat{CW})\cong \hat{CW}(R_k)$. Since the Frobenius $\bfF$ and the Verschiebung $\bfV$ act on $CW$ by

\[ \bfF(\ldots, a_{-n},\ldots)=(\ldots, a_{-n}^p,\ldots),\]
\[ \bfV(\ldots, a_{-2}, a_{-1}, a_0)=(\ldots, a_{-2}, a_{-1}),\]
$\bfF$ and $\bfV$ also act on $M$ accordingly.
For any algebra $A$ over $k$,

\[ G_{/k}(A)\cong \Hom_{\mathbf D_k}(M, A). \]

Suppose $N$ is a $\Dieu_k$-module. Let $N^{(j)}$ denote the $\mathbf D_k$-module with the same underlying set $N$ and action twisted by $\sigma^j$. In other words, for $n \in N^{(j)}$ and $\lambda \in W$,

\[ \lambda\circ n=\sigma^{-j}(\lambda)n.\]

We note that $\bfF$ induces a $\mathbf D_k$-linear isomorphism $\bfF: M^{(j)} \to M^{(j-1)}$, and $\bfV$ induces a $\mathbf D_k$-linear isomorphism $\bfV: M^{(j)} \to M^{(j+1)}$. So, we can define the following maps:
\begin{enumerate}[(a)]
\item
$$\varphi_{i, j}:\mm^i\otimes_{\OO_{K'}} N^{(j)} \to \mm^{i-1}\otimes_{\OO_{K'}}N^{(j)}$$ 
is a natural map induced by the inclusion $\mathfrak m^i \to \mathfrak m^{i-1}$,

\item 
$$f_{i,j}: \mm^i \otimes_{\OO_{K'}} N^{(j)} \to \mm^i \otimes_{\OO_{K'}} N^{(j-1)}$$
induced by $\bfF: N^{(j)} \to N^{(j-1)}$, and

\item 
$$v_{i,j}: \mm^i \otimes_{\OO_{K'}} N^{(j)} \to \mm^{i-e} \otimes_{\OO_{K'}} N^{(j+1)}$$ 
given by $v_{i,j}(\lambda \otimes m)= p^{-1}\lambda \otimes \bfV m$.
\end{enumerate}

For a subset $I$ of $\Z\times \Z$, we let $\mathcal D_I(N)$ denote the system of diagrams (in the category of $\OO_{K'}$-modules) of the objects $\mm^i\otimes N^{(j)}$ where $(i,j) \in I$ and the maps $\varphi_{i,j}, f_{i,j}, v_{i,j}$ between the objects of $\mathcal D_I(N)$.  (See \cite{Fontaine}~p.189.)

We define

\[ I_0=\{ (i,j) \in \Z\times \Z, \; (j\geq 0) \; | \;  i\geq 0 \text{ if }j=0, i \geq p^{j-1}-je \text{ if }j \geq 1\}, \]
and let 

$$N_{\OO_{K'}}\stackrel{def}=\varinjlim\mathcal D_{I_0}(N).$$

For $j'>0$, we also define

\[ I_{j'}=\{ (i,j) \in \Z\times \Z, \;  (j \geq j') \; | i \geq p^{j-1}-je \}, \]
and let

$$N_{\mathcal O_{K'}}[j']\stackrel{def}=\varinjlim\mathcal D_{I_{j'}}(N).$$

When $M$ is a $\mathbf D_k$-module without $\bfF$-torsion, it is well-known that $M_{\mathcal O_{K'}}[1]\to M_{\mathcal O_{K'}}$ is injective, and

\[ M/\bfF M\cong   M_{\mathcal O_{K'}}/  M_{\mathcal O_{K'}}[1] \]
(\cite{Fontaine}~5.2.5, Corollaire~1).

\begin{definition}
\begin{enumerate}[(a)]
\item
For an algebra $g$ over $\OO_{K'}$, we can define

\begin{eqnarray*} \omega_g: \hat{CW}(g) &\to 	&	\Qp\otimes g 	\\
(\cdots, a_{-n},\cdots, a_{-1},a_0)	&\mapsto	&	\sum_{n=0}^{\infty} p^{-n}a_{-n}^{p^n}.
\end{eqnarray*}

\item
We define $P'(g)$ as the $\mathcal O_{K'}$-submodule of $\Qp\otimes g$ generated by $p^{-n} a^{p^n}$ for all $n\geq 0$ and all $a \in \mathfrak m \cdot g$.
\end{enumerate}

We will drop $g$ from $\omega_g$ if it does not cause confusion.
\end{definition}
 This group $P'(g)$ is not indefinitely large. In fact, we have

\[ \mathfrak m \cdot  g \subset P'(g) \subset \mathfrak m^{v} \cdot  g \]
where $v=\text{min}(p^n-ne)$ (in particular, if $e\leq p-1$, $P'(g)=\mathfrak m \cdot g$). See \cite{Fontaine}~p.197.

It is easy to see $\omega_g$ naturally extends to

\[ \omega'_g: \mathcal O_{K'} \otimes \hat{CW}_k(g/\mm \cdot g) \to \Qp \otimes g/P'(g)\]
by choosing a lifting $(\tilde a_{-n})\in \hat{CW}_k(g)$ of $(a_{-n})\in  \hat{CW}_k(g/\mm \cdot g)$.

\begin{proposition}[\cite{Fontaine}~Proposition~2.5]	\Label{Atlantic}
Let $N$ be a $\Dieu_k$-module so that $\bfV N=N$. Then, the canonical map $\OO_{K'}\otimes N \to N_{\OO_{K'}}$ is surjective, and its kernel is $\sum_{j=1}^{\infty} \mm^{p^{j-1}}\otimes \operatorname{Ker} \bfV^j$.

\end{proposition}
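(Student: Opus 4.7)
My plan is to establish surjectivity and the kernel description separately, in both cases using that $\bfV^j$ is surjective on $N$ for every $j \geq 0$ (which follows from $\bfV N = N$ by iteration).

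For surjectivity, I start with $\lambda \otimes n \in \mm^i \otimes N^{(j)}$ at some $(i,j) \in I_0$ and write $n = \bfV^j m$ for $m \in N$. The iterated Verschiebung $v^j$ sends $p^j\lambda \otimes m \in \mm^{i+je} \otimes N^{(0)}$ to $\lambda \otimes n$, and all intermediate vertices $(i+(j-k)e, k)$ ($k = 0, \ldots, j$) lie in $I_0$ because $i + je \geq p^{j-1} \geq p^{k-1}$. The $\varphi$-chain then identifies $p^j \lambda \otimes m \in \mm^{i+je}\otimes N$ with its image in $\OO_{K'} \otimes N = \mm^0 \otimes N^{(0)}$, yielding surjectivity.

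For the containment $\sum_{j\geq 1}\mm^{p^{j-1}} \otimes \operatorname{Ker}\bfV^j \subseteq $ kernel, fix $\lambda \in \mm^{p^{j-1}}$ and $n \in \operatorname{Ker} \bfV^j$. The element $\lambda \otimes n$ at $(0,0)$ equals, via $\varphi^{p^{j-1}}$, its lift at $(p^{j-1}, 0) \in I_0$. Iterating $v$ a total of $j$ times yields $p^{-j}\lambda \otimes \bfV^j n = 0$ in $\mm^{p^{j-1}-je} \otimes N^{(j)}$, with all intermediate vertices $(p^{j-1}-ke, k)$ in $I_0$ since $p^{j-1} \geq p^{k-1}$ for $k \leq j$.

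For the reverse containment I will construct an inverse modulo $\sum$: set $Q = \OO_{K'} \otimes N \big/ \sum_{j \geq 1} \mm^{p^{j-1}} \otimes \operatorname{Ker} \bfV^j$, and define $\tilde\Phi_{i,j}: \mm^i \otimes N^{(j)} \to Q$ by $\lambda \otimes n \mapsto p^j\lambda \otimes m \pmod{\sum}$, where $m \in N$ is any element with $\bfV^j m = n$. Independence from the choice of $m$ uses $p^j\lambda \in \mm^{je+i} \subseteq \mm^{p^{j-1}}$, which is exactly the defining inequality of $I_0$. Compatibility with $\varphi$ is automatic; with $f$ follows from $\bfF n = \bfF\bfV^j m = \bfV^{j-1}(pm)$; and with $v$ from $\bfV n = \bfV^{j+1}m$ together with the $p^{-1}$ twist in the definition of $v_{i,j}$. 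Thus $\tilde\Phi$ descends to $\Phi : N_{\OO_{K'}} \to Q$, and the composition $\OO_{K'} \otimes N \to N_{\OO_{K'}} \xrightarrow{\Phi} Q$ is visibly the canonical projection, so the kernel of the first arrow already lies in $\sum$. The main technical obstacle is coherently verifying the well-definedness and the three compatibilities of $\tilde\Phi$: the $I_0$-condition $i \geq p^{j-1} - je$ is used essentially, because any weaker inequality would leave $p^j \lambda \otimes (m - m')$ outside $\mm^{p^{j-1}} \otimes \operatorname{Ker} \bfV^j$ and break independence of the lift.
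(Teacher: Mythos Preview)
The paper does not supply its own proof of this proposition; it is quoted verbatim from Fontaine (\cite{Fontaine}, Proposition~2.5) without argument. Your proof is correct and is essentially the natural one: the surjectivity via the zig-zag $(0,0)\xleftarrow{\varphi^{i+je}}(i+je,0)\xrightarrow{v^j}(i,j)$ using $\bfV N=N$, the easy kernel containment via the $v$-path carrying $\lambda\otimes n$ to $p^{-j}\lambda\otimes\bfV^j n=0$, and the reverse containment via the retraction $\Phi$ all work as written. The verification you single out as the crux---that the $I_0$-inequality $i+je\geq p^{j-1}$ is precisely what forces $p^j\lambda\otimes(m-m')\in\mm^{p^{j-1}}\otimes\operatorname{Ker}\bfV^j$---is indeed the heart of the matter, and your compatibility checks for $\varphi$, $f$, $v$ are accurate.
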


\begin{proposition}[\cite{Fontaine}~Lemme~3.1]	\Label{Pacific}
The kernel of $\omega_g'$ contains $\sum_{j=1}^{\infty} \mm^{p^{j-1}}\otimes \operatorname{Ker} V^j$.
\end{proposition}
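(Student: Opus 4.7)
The plan is to unpack the definitions and verify the containment on a spanning set: it suffices to show $\omega'_g(\lambda \otimes m) = 0$ in $\Qp \otimes g / P'(g)$ for each generator $\lambda \otimes m$ with $\lambda \in \mm^{p^{j-1}}$ and $m \in \operatorname{Ker}\bfV^j \subseteq \hat{CW}_k(g/\mm g)$.

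First I would identify $\operatorname{Ker}\bfV^j$ concretely. Since $\bfV$ on $\hat{CW}_k$ acts by dropping the rightmost entry, an element $m = (\ldots, a_{-2}, a_{-1}, a_0)$ lies in $\operatorname{Ker}\bfV^j$ exactly when $a_{-n} = 0$ in $g/\mm g$ for every $n \geq j$. Fix any lifting $(\tilde a_{-n}) \in \hat{CW}(g)$ of $m$, chosen so that $\tilde a_{-n} \in \mm \cdot g$ for all $n \geq j$. Then
\[
\omega_g(\tilde m) \;=\; \sum_{n=0}^{\infty} p^{-n} \tilde a_{-n}^{p^n} \;\equiv\; \sum_{n=0}^{j-1} p^{-n} \tilde a_{-n}^{p^n} \pmod{P'(g)},
\]
since every tail term has $\tilde a_{-n} \in \mm g$ and hence $p^{-n}\tilde a_{-n}^{p^n} \in P'(g)$ by the very definition of $P'(g)$.

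The heart of the argument is to show that multiplying this finite sum by $\lambda$ lands in $P'(g)$. Write $\lambda = \lambda' \pi^{p^{j-1}}$ with $\pi$ a uniformizer of $\OO_{K'}$ and $\lambda' \in \OO_{K'}$. For each index $1 \leq n \leq j-1$, rewrite $\pi^{p^{j-1}} = (\pi^{p^{j-1-n}})^{p^n}$ (using $p^{j-1-n}\cdot p^n = p^{j-1}$, valid because $n \leq j-1$), and push the outer $p^n$-th power inside the $p^n$-th power on $\tilde a_{-n}$:
\[
\lambda \cdot p^{-n} \tilde a_{-n}^{p^n} \;=\; \lambda' \cdot \bigl(\pi^{p^{j-1-n}}\bigr)^{p^n} \cdot p^{-n} \tilde a_{-n}^{p^n} \;=\; \lambda' \cdot p^{-n} \bigl(\pi^{p^{j-1-n}} \tilde a_{-n}\bigr)^{p^n}.
\]
Since $\pi^{p^{j-1-n}} \tilde a_{-n} \in \mm \cdot g$, the right-hand side is $\lambda'$ times a defining generator of $P'(g)$. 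The case $n = 0$ is immediate: $\lambda \tilde a_0 \in \mm g \subseteq P'(g)$ directly, because $\lambda \in \mm^{p^{j-1}} \subseteq \mm$. Summing over $n = 0, \ldots, j-1$ gives $\lambda \cdot \omega_g(\tilde m) \in P'(g)$, which is exactly the assertion $\omega'_g(\lambda \otimes m) = 0$.

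The one thing I would treat carefully is the independence from the choice of lifting, but that is precisely what the construction of $\omega'_g$ absorbs: two liftings differ componentwise by elements of $\mm g$, and applying the same factoring identity to the difference shows the discrepancy in $\omega_g$ is already in $P'(g)$. Overall I do not expect a genuinely hard step; the proof is a bookkeeping argument resting on the single algebraic identity $\pi^{p^{j-1}} = (\pi^{p^{j-1-n}})^{p^n}$, which is tailored to exactly match the weight $p^{j-1}$ appearing in the statement.
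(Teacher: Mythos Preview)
The paper does not give its own proof of this proposition; it simply records the statement and cites Fontaine's Lemme~3.1. Your argument is a correct direct verification: identifying $\operatorname{Ker}\bfV^j$ as the covectors whose entries past index $j-1$ vanish, lifting so that those entries land in $\mm g$, and then using the identity $\pi^{p^{j-1}}=(\pi^{p^{j-1-n}})^{p^n}$ to absorb each surviving term $\lambda\,p^{-n}\tilde a_{-n}^{p^n}$ into $P'(g)$ is exactly the natural proof, and the $n=0$ case in fact falls under the same identity rather than needing separate treatment. This is presumably what Fontaine does as well, since there is essentially only one move available.
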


There is a natural map $\mathcal O_{K'} \otimes \hat{CW}_k(g/\mm g) \to \hat{CW}_{k}(g/\mm g)_{\OO_{K'}}$. Note $\bfV   \hat{CW}_k(g/\mm g)= \hat{CW}_k(g/\mm g)$. Thus, by Propositions~\ref{Atlantic} and \ref{Pacific}, $\omega_g'$ factors through

\[ \omega_g: \hat{CW}_{k}(g/\mm g)_{\OO_{K'}} \to  \Qp \otimes g/P'(g) \]
(\cite{Fontaine}~p.197).

Recall that $R$ is the affine algebra of $G$. Then, there is the coproduct map $\delta: R\to R\hat\otimes_{\OO_{K'}} R$ which induces the group operation of $G$. 

Let $P_R$ be the $R$-module generated by $a^{p^n}/p^n$ for every $a \in R$ and $n\geq 0$. Let $L$ be the set of $a \in P_R$ so that $a\otimes 1 - \delta(a)+1\otimes a=0$. In other words, $L$ is the set of logarithms. It naturally satisfies
\[ L/\mathfrak m L \stackrel{\sim}\to M_{\mathcal O_{K'}}/  M_{\mathcal O_{K'}}[1] \cong  M/\bfF M. \]

Fontaine defined the following functor $G(L, M)$: 

\begin{definition}[\cite{Fontaine}~Section~4.4]	\Label{Fontaine-Nara}
For an algebra $g$ over $\OO_{K'}$ (i.e., $g$ is a ring containing $\OO_{K'}$), $G(L, M)(g)$ is the set of points $(\mathbf y, \mathbf x)$ with $\mathbf x \in \Hom_{\Dieudonne_k}( M, CW_k(g/\mm \cdot g))$, and $\mathbf y \in \Hom_{\mathcal O_{K'}}(L, \Qp\otimes g)$ satisfying the following: $\bfx$ naturally induces a map

$$\bfx_{\OO_{K'}}: M_{\mathcal O_{K'}} \to CW_k(g/\mm \cdot g)_{\mathcal O_{K'}}.$$
Then, $(\bfy, \bfx)$ is a fiber product in the sense that $\bfx_{\OO_{K'}}$ and $\bfy$ are identical through

\begin{eqnarray*}
\Hom_{\mathcal O_{K'} \otimes \Dieu_k}( M_{\mathcal O_{K'}}, CW_k(g/\mm g)_{\mathcal O_{K'}}) \to & \Hom_{\mathcal O_{K'}}( L,  \Qp\otimes g/ P'(g))\\
&\uparrow\\
&\Hom_{\mathcal O_{K'}}(L,  \Qp\otimes g),
\end{eqnarray*}
\end{definition}

There is a natural map $i_G:G\to G(L, M)$, and also we can find a map in the reverse direction $j_G:G(L, M)\to G$. These maps are not necessarily isomorphisms unless $e<p-1$. Rather, $i_G\circ j_G=p^t$, $j_G\circ i_G=p^t$ for some $t$ which depends on the ramification index $e$.
\end{section}

\begin{section}{Perrin-Riou's insight, and weak bounds for ranks}	\Label{Constructing}

In this section, we construct points of formal group schemes over local fields satisfying certain norm relations. The local points we construct are analogous to the points that Perrin-Riou constructed (\cite{Perrin-Riou-1}), and indeed, this section is an effort to find a way to make her idea work for group schemes defined over ramified fields. As in her work, Fontaine's functor (\cite{Fontaine}, and also \cite{Dieudonne-1}, \cite{Dieudonne-2}) plays a central role, but we need a functor defined for group schemes over ramified fields. There is a brief discussion about the functor in the previous section (Section~\ref{Fontaine}). And then, again following Perrin-Riou, we construct power series analogous to Mazur's characteristic polynomials of the Selmer groups. Our power series have limited utility unlike Mazur's characteristics because they are not integral. Nonetheless, they give a bound for the coranks of the Selmer groups (thus a bound for the ranks of the Mordell-Weil groups).

\begin{subsection}{Constructing the Perrin-Riou local points}		\Label{Some special}
Suppose $k_{\infty}/\Qp$ is a totally ramified normal extension with $\Gal(k_{\infty}/\Qp)\cong \Z_p^{\times}$. By local class field theory, it is given by a Lubin-Tate group of height $1$ over $\Zp$. In other words, there is $\varphi(X)=X^p+\alpha_{p-1} X^{p-1}+\cdots+\alpha_1X \in \Zp[X]$ with $p|\alpha_i$, $v_p(\alpha_1)=1$ so that

\[ k_{\infty}=\cup_n \Qp(\pi_n) \]
where $\varphi(\pi_n)=\pi_{n-1}$ ($\pi_n\not=0$ for $n > 0$, $\pi_0=0$).

\begin{remark}		\Label{Edward the Confessor}
We can also study a more general case where $k_{\infty}/\Qp$ is ``merely'' ramified (rather than totally ramified). It can certainly be done as the author did in a different context and for a different problem in \cite{Kim-1}. The notation will become much more complicated.
\end{remark}

Suppose $K'=\Qp(\pi_N)$ for some $N>0$. Let $\mm=\mm_{\OO_{K'}}$, and $k$ be $\OO_{K'}/\mm_{\OO_{K'}}$ (which is simply $\mathbb F_p$).

We let $G$ be a formal group scheme over $\mathcal O_{K'}$ such that its reduced group scheme $G_{/k}$ (i.e., the special fiber) is smooth (therefore, $G$ has good reduction).

As in section~\ref{Fontaine}, we set $M=\Hom(G_{/k}, \hat{CW})$, which is a $\mathbf D$-module, and define $L$ as we did in Section~\ref{Fontaine}. In addition, we assume 

\begin{assumption}
The dimension of $G$ is $1$ (i.e., $L$ is rank $1$ over $\OO_{K'}$).
\end{assumption}
This assumption will make our work much simpler. 

\begin{remark}
Even though the author has not thought much about it, the case where the dimension of $G$ is not $1$ may not be so difficult. We only need to consider multiple logarithms.
\end{remark} 
Also, we assume

\begin{assumption}	\Label{Assumption-1}
Recall that we assume $G$ has good reduction. Also we assume $G$ does not have ordinary reduction. (See Definition~\ref{Calais}.)
\end{assumption}
Clearly, the case where $G$ has good ordinary reduction is covered well by Mazur's work (\cite{Mazur}).

Since we always assume that $\bfF$ acts on $M$ as a topological nilpotent, $M$ can be considered as a $\Zp[[\bfF]]$-module.

We set

\[ d=\rank_{\Zp} M. \]
Since we assume $G_{/k}$ is of dimension $1$, $\dim_{\mathbb F_p} M/\bfF M=1$, thus we may choose $\bfm \in M$ so that it generates $M$ over $\Zp[[\bfF]]$. More specifically, 

\[ \bfm, \bfF\bfm, \cdots, \bfF^{d-1} \bfm,\]
are $\Zp$-linearly independent, and generate $M$ over $\Zp$.

\begin{remark}
In fact, this seems to be the only place in this section where we use the condition that $\bfF$ is a topological nilpotent.
\end{remark}

We may also choose an $\OO_{K'}$-generator $\bfl$ of $L$. Since $L \subset M_{\OO_{K'}}$, we may write

\begin{eqnarray*}
\bfl	&=&	(\bfl_{ij})_{(i,j)\in I_0},\\
\bfl_{ij}		&=&	\sum_{k=0}^{d-1} \alpha_k^{(ij)} \bfF^k \bfm \in 	\mm^i \otimes M^{(j)}
\end{eqnarray*}
for some $\alpha_k^{(ij)} \in \mm^i$.

We set

\[ H(X)={\det}_{\Zp} (X\cdot 1_M-\bfF|M)=X^d+a_{d-1}X^{d-1}+\cdots+a_0 \in \Zp[X], \]
then 

\[ \bar H(X)\stackrel{def}= \ds \frac{H(X)}{a_0} = 1+ \ds \frac{a_1}{a_0}X+\cdots+\frac{a_{d-1}}{a_0}X^{d-1}+\frac1{a_0}X^d. \]
We let

\[J(X)\stackrel{def}=\bar H(X) -1=b_1X+b_2X^{2}+\cdots+b_dX^{d} \]
then formally we have

\[ \bar H(X)^{-1}=1-J(X)+J(X)^2-\cdots .\]

\begin{notation}
\begin{enumerate}
\item Recall $H(X)=X^d+a_{d-1}X^{d-1}+\cdots+ a_0$, and $\varphi(X)=X^p+\alpha_{p-1} X^{p-1}+\cdots+\alpha_1X$. Define

\[ \epsilon\stackrel{def}=\ds \frac{a_0 \alpha_{p-1}}{p\cdot (a_0+a_1+\cdots+a_{d-1}+1)}. \]
(Note that $\epsilon \in p\Zp$ because $a_0+a_1+\cdots+a_{d-1}+1 \in 1+p\Zp$.)

\item Let $\mathcal P$ be the $\Zp[[X]]$-submodule of $\Qp[[X]]$ which is generated by $\frac{X^{p^n}}{p^n}$ for $n=0,1,2,\cdots$. And, let $\bar{\mathcal P}=\mathcal P/p\Zp[[X]]$, which is isomorphic to $\hat{CW}(\mathbb F_p[[X]])$ through 

\begin{eqnarray*}	\omega: \hat{CW}(\mathbb F_p[[X]]) 	&\to	& \bar{\mathcal P}		\\
(\cdots, a_1, a_0)		&\mapsto	& \sum \ds \frac{\tilde a_n^{p^n}}{p^n}
\end{eqnarray*}
($\tilde a_n \in \Zp[[X]]$ is a lifting of $a_n$).

\item
Let $\varphi$ be an operator on $\mathcal P$ given by

\[ \varphi(X^n):=\varphi(X)^n \]
which is equivalent to $\bfF$ on $\bar{\mathcal P} \cong \hat {CW}(\mathbb F_p)$.
(More precisely, for $a\in W$,

\[ \varphi(a)=\sigma(a) \]
where $\sigma$ is the $p$-th Frobenius map on $W$, and thus $\varphi$ is a $\sigma$-linear operator. But, here we have $W=\Zp$, so we can safely ignore this.)

Then, we define
\[ l(X)=\left[ 1-J(\varphi)+J(\varphi)^2-\cdots \right] \circ X. \]

\item  Define $\bfx \in G_{/k}(\mathbb F_p[[X]]) \cong \Hom_{\Dieu}(M, \hat {CW}(\mathbb F_p[[X]])) \cong \Hom_{\Dieu}(M, \bar{\mathcal P})$ by

\[ \bfx(\bfm)= l(X) \pmod{p\Zp[[X]]}\]
and extend $\Dieu$-linearly. (Note that $\Hom_{\Dieu}(M, \bar{\mathcal P}) \cong \Hom_{\Zp[\bfF]}(M, \bar{\mathcal P})$ by \cite{Perrin-Riou-1}~Section~3.1~p.261.)
\end{enumerate}
\end{notation}

\begin{proposition}		\Label{Sino-Japanese War}
$\bfx$ is well-defined.
\end{proposition}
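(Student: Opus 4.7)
The plan is to reduce well-definedness of $\bfx$ to two checks: (i) that the formal expression $l(X)$ actually defines an element of $\mathcal P$ (so that $l(X)\pmod{p\Zp[[X]]}$ is a legitimate element of $\bar{\mathcal P}$), and (ii) that the prescribed assignment $\bfm\mapsto l(X)\pmod{p\Zp[[X]]}$ is compatible with the $\Zp[\bfF]$-module presentation of $M$. The identification $\Hom_{\Dieu}(M,\bar{\mathcal P})=\Hom_{\Zp[\bfF]}(M,\bar{\mathcal P})$ from \cite{Perrin-Riou-1}, Section~3.1, reduces $\Dieu$-linearity to $\Zp[\bfF]$-linearity, so these two checks suffice.

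For (ii), the key point is that $\bfm,\bfF\bfm,\ldots,\bfF^{d-1}\bfm$ being a $\Zp$-basis of $M$ makes $\bfm$ a cyclic vector for $\bfF$: Cayley--Hamilton gives $H(\bfF)\bfm=0$, while the $\Zp$-independence of the basis forbids any annihilating polynomial of degree less than $d$. Hence $M\cong\Zp[\bfF]/(H(\bfF))$ as $\Zp[\bfF]$-modules via $\bfm$, and what remains is to verify $H(\varphi)\,l(X)\equiv 0\pmod{p\Zp[[X]]}$. By the very construction $l(X)=\bar H(\varphi)^{-1}\!\circ\! X$, so $\bar H(\varphi)\,l(X)=X$, and multiplying by $a_0$ yields
\[ H(\varphi)\,l(X) \;=\; a_0\,\bar H(\varphi)\,l(X) \;=\; a_0\,X. \]
Since $\bfF$ is assumed to be a topological nilpotent, every root of $H$ has strictly positive $p$-adic valuation, so $a_0=\pm\prod(\text{roots of }H)\in p\Zp$; therefore $H(\varphi)\,l(X)=a_0 X\in p\Zp[[X]]$, which is precisely the vanishing needed in $\bar{\mathcal P}$.

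For (i), unwinding the geometric series gives $l(X)=\sum_{n\geq 0}c_n\,\varphi^n(X)$, where the generating series is $\sum_{n} c_n Y^n = a_0/H(Y) = \prod_j(1-Y/\lambda_j)^{-1}$; the $c_n$ can have $p$-denominators growing linearly in $n$, since the roots $\lambda_j$ satisfy $0<v_p(\lambda_j)\le 1$. The task is to check this growth is absorbed by the $p$-divisibility carried by $\varphi^n(X)$. Because $\varphi$ is a height-one Lubin--Tate series, one has $v_p(\alpha_1)=1$ and $\varphi(X)\equiv X^p\pmod{p}$, so the coefficient of $X$ in $\varphi^n(X)$ is $\alpha_1^n$ (of valuation exactly $n$), the top monomial is $X^{p^n}$ whose denominator $p^n$ is admissible in $\mathcal P$, and intermediate coefficients acquire proportional $p$-factors. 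Tracking valuations degree by degree shows that the coefficient of $X^m$ in $l(X)$ carries $p$-denominator at most $\lfloor\log_p m\rfloor$, which is exactly the bound imposed by membership in $\mathcal P$. This convergence verification inside $\mathcal P$ --- reconciling the $p$-denominators produced by inverting $\bar H(\varphi)$ against the $p$-numerators produced by iterating $\varphi$ --- is the main obstacle; once it is established, the cyclic-vector calculation of step (ii) together with the cited Perrin-Riou identification completes the proof.
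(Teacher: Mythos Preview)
Your proposal is correct and follows essentially the same route as the paper: verify convergence of $l(X)$ in $\mathcal P$, compute $H(\varphi)\,l(X)=a_0X\in p\Zp[[X]]$ (using that $\bfF$ is a topological nilpotent so $p\mid a_0$), and conclude that the assignment $\bfm\mapsto l(X)\pmod{p\Zp[[X]]}$ extends $\Zp[\bfF]$-linearly to $M$. The one notable difference is in the last step: the paper invokes irreducibility of $H(X)$ to justify the extension, whereas you use the cyclic-vector presentation $M\cong\Zp[\bfF]/(H(\bfF))$; your argument is cleaner and does not rely on $H$ being irreducible (which need not hold in the in-between reduction case allowed by Assumption~\ref{Assumption-1}).
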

\begin{proof}
First, we need to show $l(X)$ is well-defined. Because $G$ has supersingular reduction, $p^i b_i \in p\Zp$ ($i=1,2,\cdots, d$). Thus, $l(X)$ is well-defined (i.e., the infinite summation which defines $l(X)$ is convergent). Then, we check
\[ (1+J(\varphi))\circ \left\{ 1-J(\varphi)+J(\varphi)^2-\cdots \right\} \circ X=1\circ X.\]
Since $\bfF$ is a topological nilpotent, $p|a_0$, thus $H(\varphi)\circ l(X)=a_0X \in p\Zp[[X]]$, in other words, $H(\bfF)\circ l(X)=0 \in \hat CW(\mathbb F_p [[X]])$. Since $H(X)$ is irreducible, $\bfx$ extends to the entire $M$ $\Dieu$-linearly.
\end{proof}

\begin{notation}		\Label{Moscow}
\begin{enumerate}
\item Define a lifting $\tilde \bfx \in \Hom_{\Zp}(M, \mathcal P)$ of $\bfx$ by

\[ \tilde \bfx(\bfF^k \bfm)=\epsilon+\varphi^k \circ l(X)=\epsilon+ l(\varphi^{(k)}(X)), \quad k=0,1,\cdots,d-1\]
where $\varphi^{(k)}=\varphi(\varphi(\cdots(X)))$ ($k$-times).

\item Recall

\begin{eqnarray*}
\bfl	&=&	(\bfl_{ij})_{(i,j)\in I_0},\\
\bfl_{ij}		&=&	\sum_{k=0}^{d-1} \alpha_k^{(ij)} \bfF^k \bfm \in 	\mm^i \otimes M^{(j)}.
\end{eqnarray*} 
We can write

\[	\bfF^j \bfl_{ij}		=	\sum_{k=0}^{d-1} \beta_k^{(ij)} \bfF^k \bfm\]
for some $\beta_k^{(ij)}\in \mm^i$.

\item
Define $\bfy \in \Hom_{\OO_{K'}}(L, K'[[X]])$ explicitly as follows:

We set

\[ \bfy(\bfl)=\sum_{(i,j)\in I_0} \sum_{k=0}^{d-1} \beta_k^{(ij)} \tilde\bfx (\bfF^k \bfm) \]
and extend to $L$ $\OO_{K'}$-linearly.

\item Then, we set $P=(\bfy, \bfx) \in G(L, M)(\OO_{K'}[[X]])$.
\end{enumerate}
\end{notation}

\begin{proposition}
$P=(\bfy, \bfx)$ is well-defined.
\end{proposition}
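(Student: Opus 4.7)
The plan is to verify the three conditions in Definition~\ref{Fontaine-Nara}: (i) $\bfx$ is a well-defined $\Dieu_k$-linear morphism, (ii) $\bfy$ is a well-defined $\OO_{K'}$-linear map $L\to K'[[X]]$, and (iii) the pair $(\bfy,\bfx)$ fits in the fibre-product diagram via $\omega_g$. Part (i) is Proposition~\ref{Sino-Japanese War}, so only (ii) and (iii) remain.

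For (ii) I would first establish convergence of the double sum $\sum_{(i,j)\in I_0}\sum_{k=0}^{d-1}\beta_k^{(ij)}\tilde\bfx(\bfF^k\bfm)$ coefficient by coefficient in $K'[[X]]$. The coefficient $\beta_k^{(ij)}$ lies in $\mm^i$, while the worst $p$-adic denominator in $\tilde\bfx(\bfF^k\bfm)=\epsilon+l(\varphi^{(k)}(X))$ on the coefficient of $X^{p^n}$ is $p^{-n}$, controlled by the convergence of $l(X)$ established in the proof of Proposition~\ref{Sino-Japanese War}. Since $I_0$ forces $i\geq p^{j-1}-je$ for $j\geq 1$, the valuation of each term tends to $+\infty$ as $(i,j)\to\infty$, so the sum converges.

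Next I would verify that the formula is consistent across the transition maps $\varphi_{i,j},f_{i,j},v_{i,j}$ of $\mathcal D_{I_0}(M)$, which is what it means for the $(\bfl_{ij})$ to form a single element $\bfl$ of the inductive limit. Compatibility with $\varphi_{i,j}$ is automatic because the formula is unchanged by the inclusion $\mm^i\hookrightarrow \mm^{i-1}$. Compatibility with $f_{i,j}$ follows from the tautology $\tilde\bfx(\bfF\cdot\bfF^{k-1}\bfm)=\tilde\bfx(\bfF^k\bfm)$. The delicate case is $v_{i,j}$, which sends $\lambda\otimes\bfF^k\bfm$ to $p^{-1}\lambda\otimes\bfV\bfF^k\bfm$: using $\bfV\bfF^k=p\bfF^{k-1}$ for $k\geq 1$ the interior indices match, but on the boundary term $k=0$ one must check that $\bfV\bfm$, which lives outside the $\Zp[[\bfF]]$-basis $\{\bfF^k\bfm\}_{k<d}$, produces the correct $\tilde\bfx$-value. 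Rewriting $\bfV\bfm$ using the relation $H(\bfF)\bfm=0$ together with $\bfV\bfF=p$ expresses $\bfV\bfm$ as a $\Zp$-combination of the $\bfF^k\bfm$'s with coefficients involving $a_0,\dots,a_{d-1}$, and the normalising constant $\epsilon=a_0\alpha_{p-1}/(p(a_0+\cdots+a_{d-1}+1))$ was chosen precisely so that the constant-term discrepancy absorbed into $\epsilon$ cancels against the $\alpha_{p-1}$-contribution from $\varphi$. This is the step I expect to be the principal technical obstacle.

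Finally, for (iii), $\tilde\bfx$ is by construction a $\Zp$-lift of $\bfx$ to $\mathcal P$ via the map $\omega$ of Fontaine. The formula for $\bfy(\bfl)$ first expresses $\bfl\in M_{\OO_{K'}}$ in the $\Zp$-basis $\{\bfF^k\bfm\}_{k<d}$ of $M$ (through the $\beta_k^{(ij)}$), then applies $\tilde\bfx$ termwise, and finally sums. Reducing this modulo $P'(\OO_{K'}[[X]])$ and chasing through the definitions of $\bfx_{\OO_{K'}}$ and of the induced map $\omega_g$ on $\hat{CW}_k(\mathbb F_p[[X]])_{\OO_{K'}}$, the image of $\bfy(\bfl)$ in $\Qp\otimes\OO_{K'}[[X]]/P'(\OO_{K'}[[X]])$ agrees with $\omega_g\circ\bfx_{\OO_{K'}}(\bfl)$, which is precisely the fibre-product compatibility.
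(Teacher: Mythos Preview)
Your part (iii) is essentially the paper's entire proof, and your parts (i)--(ii) add material the paper does not need. In particular, your account of the role of $\epsilon$ is mistaken.

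The paper's argument is a direct computation of the fibre-product condition only. For a representative $\bfl_{ij}=\sum_{k}\alpha_k^{(ij)}\bfF^k\bfm\in\mm^i\otimes M^{(j)}$, one has $\bfx(\bfl_{ij})=\sum_k\alpha_k^{(ij)}\bfx(\bfF^k\bfm)\in\mm^i\otimes\hat{CW}(\mathbb F_p[[X]])^{(j)}$. The key point you do not isolate is how $\omega$ is evaluated on such a $(j)$-twisted piece: since $\omega$ on $\hat{CW}(\mathbb F_p[[X]])_{\OO_{K'}}$ is deduced from $\omega'$ on $\OO_{K'}\otimes\hat{CW}$, one first applies $\bfF^j$ to pass to $\mm^i\otimes\hat{CW}(\mathbb F_p[[X]])^{(0)}$. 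This turns $\sum_k\alpha_k^{(ij)}\bfF^k\bfm$ into $\sum_k\beta_k^{(ij)}\bfF^k\bfm$ by the very definition of the $\beta_k^{(ij)}$, and then $\omega(\bfx(\bfF^k\bfm))=l(\varphi^{(k)}(X))\pmod{P'}$. Hence $\omega(\bfx(\bfl_{ij}))=\sum_k\beta_k^{(ij)}l(\varphi^{(k)}(X))\pmod{P'}$, which is $\bfy(\bfl)\pmod{P'}$ because the $\epsilon$-contributions $\beta_k^{(ij)}\epsilon$ lie in $p\,\mm^i\subset P'(\OO_{K'}[[X]])$.

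Two specific corrections to your (ii). First, the constant $\epsilon$ is \emph{not} introduced to repair $v_{i,j}$-compatibility; it plays no role in the well-definedness proof at all (it simply vanishes modulo $P'$). Its purpose is to absorb the constant $-\alpha_{p-1}$ arising from $\Tr_{n/n-1}\pi_n$ so that the trace relation of Proposition~\ref{Despicable-Laundry-Machine} holds exactly; see the identity $p^d(1+\sum b_i)\epsilon=p^{d-1}\alpha_{p-1}$ used there. Second, the paper does not treat $\sum_{(i,j)\in I_0}$ as an infinite sum requiring convergence, nor does it check transition-map compatibility for $\bfy$; it works with a representative $\bfl_{ij}$ and verifies the fibre-product identity for that representative, which is all Definition~\ref{Fontaine-Nara} requires. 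Your convergence and $v_{i,j}$ discussion can be dropped.
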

\begin{proof}
We need to show it is a fiber product in the sense of Definition~\ref{Fontaine-Nara}. We let $\bfx$ also denote the extended map $\bfx: M_{\OO_{K'}}\to \hat{CW}(\mathbb F_p[[X]])_{\OO_{K'}}$.

For each $\bfl_{ij}\in \mm^i \otimes M^{(j)}$, 

$$\bfx(\bfl_{ij})=\bfx(\sum_{k=0}^{d-1} \alpha_k^{(ij)} \bfF^k \bfm)=\sum_{k=0}^{d-1} \alpha_k^{(ij)} \bfx(\bfF^k \bfm) \in \mm^i \otimes \hat{CW}(\mathbb F_p[[X]])^{(j)}.$$

Because $\omega$ on $\hat{CW}(\mathbb F_p[[X]])_{\OO_{K'}}$is deduced from $\omega: \OO_{K'}\otimes \hat{CW}(\mathbb F_p[[X]]) \to K'[[X]]/P'(\OO_{K'}[[X]])$ through $\OO_{K'}\otimes \hat{CW}(\mathbb F_p[[X]]) \to \hat{CW}(\mathbb F_p[[X]])_{\OO_{K'}}$, to evaluate $\omega$ on $\sum_{k=0}^{d-1} \alpha_k^{(ij)} \bfx(\bfF^k \bfm) \in \mm^i \otimes \hat{CW}(\mathbb F_p[[X]])^{(j)}$, we need to send it to $p^j \cdot \mm^i \otimes \hat{CW}(\mathbb F_p[[X]])$ by $\bfF^j$, and obtain

\begin{eqnarray*}
\omega(\bfx(\bfl_{ij}))	&=&		\omega \left( \bfF^j\sum_{k=0}^{d-1} \alpha_k^{(ij)}  \bfx(\bfF^k \bfm) \right)	\\
&=& \omega \left( \sum_{k=0}^{d-1} \beta_k^{(ij)}  \bfx(\bfF^k \bfm) \right)		\\
&=& \sum_{k=0}^{d-1} \beta_k^{(ij)} l(\varphi^k(X)) \pmod{P'(\OO_{K'}[[X]])}.
\end{eqnarray*}
Thus, $\omega(\bfx(\bfl))=\bfy(\bfl) \pmod{P'(\OO_{K'}[[X]])}$, and by extending $\OO_{K'}$-linearly, $\bfx=\bfy$ as elements of $\Hom_{\OO_{K'}}(L, K'[[X]]/P'(\OO_{K'}[[X]]))$, and our claim follows.
\end{proof}

For simplicity, let $\Tr_{n/m}$ denote $\Tr_{K'(\pi_n)/K'(\pi_m)}$.

\begin{proposition}		\Label{Despicable-Laundry-Machine}
Modulo torsions, we have

\begin{eqnarray*} \Tr_{n/n-d} P(\pi_n) &=& -p\cdot b_1\cdot \Tr_{n-1/n-d} P(\pi_{n-1} ) - p^2 \cdot b_2 \cdot \Tr_{n-2/n-d} P(\pi_{n-2} )		\\
&& - \quad  \cdots \quad - p^d \cdot b_d\cdot P(\pi_{n-d} )
\end{eqnarray*}
for every $n\geq N+d$.
\end{proposition}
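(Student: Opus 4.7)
My plan is to translate the desired identity into a trace identity for the formal logarithm on the $\bfy$-component of $P$, prove it via the Cayley--Hamilton relation $\bar H(\varphi)\, l(X) = X$ together with the defining property of $\epsilon$, and then transfer back to $G$ modulo torsion through the canonical isogeny $j_G \colon G(L,M) \to G$. Since $\log_G$ is Galois-equivariant and injective modulo torsion on $G(K'(\pi_{n-d})) \otimes \Q$, and since $j_G \circ i_G = p^t$ for a fixed $t$ depending only on $e$, it suffices to establish the analogous linear relation for the power series $\bfy(\bfl)(X)$ evaluated at $X = \pi_n$. Setting $f_k(X) := \tilde\bfx(\bfF^k \bfm) = \epsilon + l(\varphi^{(k)}(X))$, we have $\bfy(\bfl)(X) = \sum_{(i,j)\in I_0} \sum_{k=0}^{d-1} \beta_k^{(ij)} f_k(X)$, a $p$-adically convergent sum (because $\beta_k^{(ij)}\in\mm^i$); since the coefficients $\beta_k^{(ij)}$ lie in the base field $K'$ they commute with every Galois trace below, so it is enough to prove the relation for each $f_k$ separately.

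\textbf{The key computation.} By (the proof of) Proposition~\ref{Sino-Japanese War}, $\bar H(\varphi)\, l(X) = X$ in $\Qp[[X]]$. Since $\varphi$ acts trivially on constants in $\Z_p$, this gives
$$\bar H(\varphi)\, f_k(X) \;=\; \bar H(1)\,\epsilon + \varphi^{(k)}(X) \;=\; \frac{\alpha_{p-1}}{p} + \varphi^{(k)}(X),$$
where the second equality is precisely the definition of $\epsilon$. Evaluating at $X = \pi_n$ and using $\varphi^{(i)}(\pi_n) = \pi_{n-i}$,
$$f_k(\pi_n) + \sum_{i=1}^d b_i\, f_k(\pi_{n-i}) \;=\; \frac{\alpha_{p-1}}{p} + \pi_{n-k}.$$
Now apply $\Tr_{n/n-d}$. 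On the left, $f_k(\pi_{n-i}) \in K'(\pi_{n-i})$, so $\Tr_{n/n-d} f_k(\pi_{n-i}) = p^i\, \Tr_{n-i/n-d} f_k(\pi_{n-i})$; these are exactly the $p^i b_i$ factors required. On the right, $p^d \cdot \alpha_{p-1}/p = p^{d-1}\alpha_{p-1}$; meanwhile the minimal polynomial of $\pi_m$ over $K'(\pi_{m-1})$ is $\varphi(X) - \pi_{m-1}$, whose sub-leading coefficient is $\alpha_{p-1}$, giving $\Tr_{m/m-1}\pi_m = -\alpha_{p-1}$, and an easy induction yields $\Tr_{n/n-d}\pi_{n-k} = -p^{d-1}\alpha_{p-1}$ independently of $k$. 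The two right-hand constants cancel, leaving
$$\Tr_{n/n-d} f_k(\pi_n) \;=\; -\sum_{i=1}^d p^i b_i\, \Tr_{n-i/n-d} f_k(\pi_{n-i}).$$

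\textbf{Conclusion and the main obstacle.} Assembling the $\beta_k^{(ij)}$-linear combination proves the identity for $\bfy(\bfl)(\pi_n)$, and pulling back through $j_G$ delivers the proposition modulo torsion. The only real obstacle I anticipate is bookkeeping: verifying that (a) the Galois action on $G(L,M)$ behaves well enough that the identity on the $\bfy$-side genuinely gives the identity on the $G$-side, the finite-exponent kernel of $j_G$ being absorbed into ``torsion,'' and (b) the infinite $(i,j)$-sum can be interchanged with Galois traces and point evaluations, which is routine once one notes $\beta_k^{(ij)}\in\mm^i$. The clever and necessary input is the precise value of $\epsilon$, engineered so that the constants $\alpha_{p-1}/p$ and $\pi_{n-k}$ cancel under the trace; without that choice, the norm relation would carry additional inhomogeneous terms.
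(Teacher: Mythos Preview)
Your argument is correct and follows the same route as the paper: reduce to the $L$-part ($\bfy$-component), then verify the trace relation for each $f_k(X)=\epsilon+l(\varphi^{(k)}(X))$ using the functional equation $\bar H(\varphi)\,l(X)=X$ together with the specific value of $\epsilon$, and finally take the $\beta_k^{(ij)}$-linear combination. Your organization is in fact slightly slicker than the paper's: you apply $\bar H(\varphi)$ to $f_k$ once and then take $\Tr_{n/n-d}$ directly, whereas the paper first computes $\Tr_{n/n-1}l(\pi_n)$, iterates to $\Tr_{n/n-d}$, and only then brings in $\epsilon$; but the ingredients and the cancellation $p^{d-1}\alpha_{p-1}+\Tr_{n/n-d}\pi_{n-k}=0$ are identical.

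One point of framing to correct: you do not need $j_G$ or $\log_G$ at all. The point $P=(\bfy,\bfx)$ already lives in $G(L,M)$, and the proposition is an identity in $G(L,M)$ modulo its torsion. The paper's reduction step is simply the observation that any element of the form $(0,\mathbf z)\in G(L,M)(g)$ is torsion (because $\bfy=0$ means the logarithm vanishes), so matching the $\bfy$-components already gives the identity modulo torsion in $G(L,M)$. Your discussion of the ``finite-exponent kernel of $j_G$ being absorbed into torsion'' is therefore unnecessary, though harmless.
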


\begin{proof}
Note that $(0, \mathbf z) \in G(L, M)(g)$ is a torsion point for any $\mathbf z \in G_{/k}(g/\mm g)$. Thus, we only need to show the identity of the $L$-parts.

First, we find

\begin{eqnarray*} \Tr_{n/n-1} l(\pi_n) &=& \Tr_{n/n-1}  \left. \left[ 1-J(\varphi)+J(\varphi)^2-\cdots \right] \circ X \right|_{X=\pi_n} \\
&=&\Tr_{n/n-1} \pi_n - \Tr_{n/n-1} J(\varphi)\circ \left. \left[ 1-J(\varphi)+J(\varphi)^2- \right]\circ X \right|_{X=\pi_n} \\
&=& -\alpha_{p-1}-\Tr_{n/n-1} \left[ b_1l(\varphi(X))+\cdots+b_{d}l(\varphi^{(d)}(X))\right]_{X=\pi_n}	\\
&=& -\alpha_{p-1}-p\cdot \left[b_1 l(\pi_{n-1})+\cdots+b_d l(\pi_{n-d}) \right].
\end{eqnarray*}
Then, we can also find

\begin{eqnarray*} \Tr_{n/n-d} l(\pi_n) &=& -p^{d-1}\alpha_{p-1}-p\cdot b_1 \cdot \Tr_{n-1/n-d} l(\pi_{n-1})	\\
&& -\quad \cdots \quad -p^{d-1}\cdot b_{d-1} \cdot  \Tr_{n-d+1/n-d} l(\pi_{n-d+1})-p^d\cdot b_d  \cdot l(\pi_{n-d}).
\end{eqnarray*}

We recall that $b_1=\frac{a_1}{a_0},\cdots, b_{d-1}=\frac{a_{d-1}}{a_0}, b_d=\frac 1{a_0}$, thus from the definition of $\epsilon$, we have

\[ p^d\cdot \left( 1+\ds \frac{a_1}{a_0}+\cdots+\frac{a_{d-1}}{a_0}+\frac1{a_0} \right) \cdot \epsilon=p^{d-1} \cdot \alpha_{p-1}. \]
Thus, we have

\begin{multline} 		\Label{Tokyo}
\Tr_{n/n-d}(\epsilon+l(\pi_n)) =-p\cdot \ds \frac{a_1}{a_0} \cdot \Tr_{n-1/n-d} (\epsilon+l(\pi_{n-1}))	\\
\qquad \qquad \qquad - \quad \cdots  \quad - p^{d-1}\cdot \ds \frac{a_{d-1}}{a_0} \cdot  \Tr_{n-d+1/n-d} (\epsilon+ l(\pi_{n-d+1}))	-p^d\cdot \frac1{a_0}  \cdot (\epsilon+ l(\pi_{n-d})).
\end{multline}
Similarly, we check the following: For $0<i<d$,

\begin{eqnarray*} (\varphi^i\circ l)(\pi_n) &=& \varphi^{(i)}(\pi_n) -\varphi^i\circ J(\varphi)\circ [1-J(\varphi)+J(\varphi)^2-\cdots ] \circ X |_{\pi_n}		\\
&=& \pi_{n-i} - \left[ b_1 l(\pi_{n-i-1})+\cdots+b_d l(\pi_{n-i-d}) \right].
\end{eqnarray*}
Then, we have 

\begin{eqnarray*} \Tr_{n/n-d} (\varphi^i\circ l) (\pi_n) &=& -p^{d-1}\alpha_{p-1}-p\cdot b_1 \cdot \Tr_{n-1/n-d} l(\pi_{n-i-1})	\\
&& -\quad \cdots \quad -p^{d-1}\cdot b_{d-1} \cdot  \Tr_{n-d+1/n-d} l(\pi_{n-i-d+1})-p^d\cdot b_d  \cdot l(\pi_{n-i-d})		\\
&=& -p^{d-1}\alpha_{p-1}-p\cdot b_1 \cdot \Tr_{n-1/n-d} (\varphi^i\circ l)(\pi_{n-1})	\\
&& -\quad \cdots \quad -p^{d-1}\cdot b_{d-1} \cdot  \Tr_{n-d+1/n-d} (\varphi^i\circ l) (\pi_{n-d+1})	\\
&&-p^d\cdot b_d  \cdot (\varphi^i\circ l) (\pi_{n-d}),
\end{eqnarray*}
and by repeating the argument used above, we obtain an identity analogous to (\ref{Tokyo}). 

Recall

\[ \bfy(\bfl)=\sum_{(i,j)\in I_0} \sum_{k=0}^{d-1} \beta_k^{(ij)} (\epsilon+l(\varphi^{(k)}(X)) \]
from Notation~\ref{Moscow}. By the above discussion, we have

\begin{eqnarray*} \Tr_{n/n-d} \bfy(\bfl)|_{X=\pi_n} &=& -p\cdot b_1\cdot \Tr_{n-1/n-d} \bfy(\bfl) |_{X=\pi_{n-1}} - p^2 \cdot b_2 \cdot \Tr_{n-2/n-d} \bfy(\bfl) |_{X=\pi_{n-2}}		\\
&& - \quad  \cdots \quad - p^d \cdot b_d\cdot \bfy(\bfl) |_{X=\pi_{n-d}}
\end{eqnarray*}
and by extending it to $L$ $\OO_{K'}$-linearly, we obtain our claim.
\end{proof}

\end{subsection}

\bigskip

\begin{subsection}{Construction for Kummer Extensions}		\Label{Kummer}

\hfill

Now we define a slightly different operator $\varphi$ on $\Zp[[X]]$ by 

\[ \varphi(X)=X^p, \quad \varphi(a)=\sigma(a), a \in \Zp \]
where $\sigma$ is the $p$-th Frobenius map mentioned earlier (actually, $\sigma$ acts trivially on $\Zp$, so the action of $\varphi$ on $\Zp$ is purely symbolic.)

\begin{notation}
\begin{enumerate}
\item $K'$ is a totally ramified extension of $\Qp$, and $\zeta_p \not\in K'$. Let $\mm$ denote $\mm_{\OO_{K'}}$.

\item Set $e=[K':\Qp]$.	Assume $e<p$.

\item
Choose a uniformizer $\pi$ of $K'$, and choose $\pi_n$ for every $n \geq 0$ such that

\[ \pi_0=\pi,\quad \pi_{n+1}^p=\pi_n \quad \text{  for every  }\quad n \geq 0.\] 

\item For any $n\geq m\geq 0$, we let $\Tr_{n/m}$ denote $\Tr_{K'(\pi_n)/K'(\pi_m)}$.
\end{enumerate}
\end{notation}

Supppose $G$ is a formal group scheme of dimension $1$ over $\OO_{K'}$, its reduced scheme $G_{/k}$ over $k=\OO_{K'}/\mm$ is smooth (thus $G$ has good reduction), and $G$ has supersingular reduction. We recall from Section~\ref{Fontaine} that a Honda system $(M, L)$ is attached to $G$.

Like Section~\ref{Some special}, we choose an $\OO_{K'}$-generator $\bfl$ of $L$ and a $\Zp[\bfF]$-generator $\bfm$ of $M$. Then,

\begin{eqnarray*}
\bfl	=	(\bfl_{ij})_{(i,j) \in I_0}, \quad \bfl_{ij}= \sum_{k=0}^{d-1} \alpha_k^{(ij)} \bfF^k \bfm \in \mm^i \otimes M^{(j)}
\end{eqnarray*}
for some $\alpha_k^{(ij)} \in K'$.

Again, similar to Section~\ref{Some special}, we define

\begin{notation}
\[ H(X)={\det}_{\Zp} (X\cdot 1_M-\bfF|M)=X^d+a_{d-1}X^{d-1}+\cdots+a_0 \in \Zp[X], \]

\[ \bar H(X)\stackrel{def}= \ds \frac{H(X)}{a_0},\]

\[ J(X)\stackrel{def}=\bar H(X) -1=b_1X+b_2X^{2}+\cdots+b_dX^{d} \]

\[ l(X)\stackrel{def}= \left\{ 1-J(\varphi)+J(\varphi)^2-\cdots \right\} \circ X. \]
\end{notation}
\bigskip

\begin{proposition}
Recall $G(k[[X]])\cong \Hom_{\Zp[\bfF]}(M, \bar {\mathcal P}$) (\cite{Perrin-Riou-1}~Section~3.1 p.261). We define $\bfx \in G(k[[X]])$ by

\[ \bfx(\bfm)=l(X) \pmod{p\Zp[[X]]},\]
and expand $\Zp[\bfF]$-linearly. Then, $\bfx$ is well-defined.
\end{proposition}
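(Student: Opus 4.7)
The plan is to mimic the argument used for Proposition~\ref{Sino-Japanese War} almost verbatim, since the only substantive change between the Lubin--Tate setting and the Kummer setting is the definition of $\varphi$ (now $\varphi(X)=X^p$), and the crucial formal identity $\bar H(\varphi)\circ l(X)=X$ depends only on the ring structure of $\Zp[[X]]$ and on $\varphi$ being an endomorphism. So I would split the argument into two halves: first that $l(X)$ is itself a well-defined element of $\bar{\mathcal P}$, and second that the prescription $\bfF^k\bfm\mapsto\varphi^k\circ l(X)$ is consistent under the relation $H(\bfF)\bfm=0$.

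For convergence of $l(X)$, the key input is that $G$ has supersingular reduction, so all roots of $H^{\vee}(X)$ are non-units; expanding $H^{\vee}$ this translates into $p^i b_i\in p\Zp$ for $i=1,\dots,d$. Writing out
\[
l(X)=X-J(\varphi)\circ X+J(\varphi)^2\circ X-\cdots,
\]
the $k$-th term $J(\varphi)^k\circ X$ is a finite sum of monomials $b_{i_1}\cdots b_{i_k}\,X^{p^{i_1+\cdots+i_k}}$; the valuation estimate on the $b_i$ guarantees that each such monomial lies in $\mathcal P$ (so $l(X)\in\mathcal P$) and, modulo $p\Zp[[X]]$, the tail $\sum_{k\geq K}(-1)^kJ(\varphi)^k\circ X$ stabilises, yielding a well-defined class $l(X)\in\bar{\mathcal P}$. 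This is exactly the convergence estimate invoked in the proof of Proposition~\ref{Sino-Japanese War}, and nothing in it uses the particular form of $\varphi$.

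For the extension step, the point is that $M$ is $\Zp$-free on $\bfm,\bfF\bfm,\dots,\bfF^{d-1}\bfm$ with the single relation $H(\bfF)\bfm=0$ (Cayley--Hamilton applied to $\bfF$ acting on $M$). Hence the $\Zp[\bfF]$-linear extension of $\bfF^k\bfm\mapsto \varphi^k\circ l(X)$ defines a legitimate map $M\to\bar{\mathcal P}$ if and only if $H(\varphi)\circ l(X)=0$ in $\bar{\mathcal P}$. Formally, the definition $l(X)=[1-J(\varphi)+J(\varphi)^2-\cdots]\circ X$ gives $(1+J(\varphi))\circ l(X)=X$, i.e.\ $\bar H(\varphi)\circ l(X)=X$, and multiplying by $a_0$ yields $H(\varphi)\circ l(X)=a_0 X$. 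Because $\bfF$ is a topological nilpotent we have $p\mid a_0$, so $a_0 X\in p\Zp[[X]]$, which vanishes in $\bar{\mathcal P}=\mathcal P/p\Zp[[X]]$; this forces $H(\bfF)$ to annihilate $l(X)$ in $\bar{\mathcal P}$, and the extension is well-defined.

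The only place where I expect to need any care is in making precise the sense in which the infinite sum defining $l(X)$ converges to an element of $\bar{\mathcal P}$; the argument is tighter here than in Section~\ref{Some special} because the new $\varphi$ satisfies $\varphi(X)=X^p$ exactly rather than $\varphi(X)\equiv X^p\pmod p$, so the exponents grow as $p^{i_1+\cdots+i_k}$ and the combinatorics of $J(\varphi)^k\circ X$ simplify; once one checks that each partial sum is a finite $\Zp$-linear combination of the generators $X^{p^n}/p^n$ of $\mathcal P$ and that the difference of two consecutive partial sums lies in arbitrarily high powers of $p\Zp[[X]]$ modulo $\mathcal P$, the rest of the verification is a direct transcription of the proof of Proposition~\ref{Sino-Japanese War}.
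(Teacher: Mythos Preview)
Your proposal is correct and is exactly the approach the paper takes: its entire proof is the one line ``See Proposition~\ref{Sino-Japanese War}'', and you have reproduced that argument (convergence of $l(X)$ from $p^ib_i\in p\Zp$, then $H(\varphi)\circ l(X)=a_0X\in p\Zp[[X]]$ forcing compatibility with $H(\bfF)\bfm=0$) with the minor simplification that $\varphi(X)=X^p$ makes the degree count cleaner. One small remark: you correctly observe that the extension step only needs that $\bfm,\bfF\bfm,\dots,\bfF^{d-1}\bfm$ is a $\Zp$-basis of $M$, whereas the paper's proof of Proposition~\ref{Sino-Japanese War} additionally invokes irreducibility of $H(X)$; your formulation via Cayley--Hamilton is sufficient and slightly sharper.
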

\begin{proof}
See Proposition~\ref{Sino-Japanese War}.
\end{proof}

Now, we choose a lifting $\bfy \in \Hom_{\OO_{K'}}(L,  K'[[X]])$ of $\bfx$ as follows:

\begin{notation}

\begin{enumerate}
\item Define a lifting $\tilde \bfx \in \Hom_{\Zp}(M, \mathcal P)$ of $\bfx$ by

\[ \tilde \bfx(\bfF^i \bfm)=\varphi^i \circ l(X)=l(X^{p^i}), \quad i=0,1,\cdots,d-1.\]
Then, define $\bfy \in \Hom_{\OO_{K'}}(L, K'[[X]])$ explicitly as follows:

Write $\bfF^j \bfl_{ij}=\sum_{k=0}^{d-1} \beta_k^{(ij)} \bfF^k \bfm$ for some $\beta_k^{(ij)} \in K'$. We set

\[ \bfy(\bfl)=\sum_{(i,j) \in I_0} \sum_{k=0}^{d-1} \beta_k^{(ij)} \tilde\bfx (\bfF^k\bfm)= \sum_{(i,j) \in I_0} \sum_{k=0}^{d-1} \beta_k^{(ij)} l(X^{p^k}) \]
and expand $\bfy$ $\OO_{K'}$-linearly.

\item Then, we set $P=(\bfx, \bfy) \in G(M,L)(\Zp[[X]] \otimes \OO_{K'})$.

\end{enumerate}
\end{notation}

We note

\begin{eqnarray}	\Label{Note}
\Tr_{n/n-1} \pi_n^i =0 \quad \text{for all }n >0
\end{eqnarray}
for $i\leq e$ because $e<p$.

\begin{proposition} 	\Label{Chicken-Burger}
For $n>d$ and $i=1,2,\cdots, e$, modulo torsions, we have

\begin{eqnarray*} \Tr_{n/n-d} P(\pi_n^i) &=& -p\cdot b_1\cdot \Tr_{n-1/n-d} P(\pi_{n-1}^i) - p^2 \cdot b_2 \cdot \Tr_{n-2/n-d} P(\pi_{n-2}^i)		\\
&& - \quad  \cdots \quad -b_d\cdot p^d \cdot P(\pi_{n-d}^i).
\end{eqnarray*}
\end{proposition}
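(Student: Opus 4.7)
The plan is to follow the template of Proposition~\ref{Despicable-Laundry-Machine} nearly verbatim, with two simplifications: (a) the operator $\varphi$ is now the ordinary $p$-th power $\varphi(f)(X)=f(X^p)$, and (b) the Lubin--Tate identity $\Tr_{n/n-1}\pi_n = -\alpha_{p-1}$ is replaced by the Kummer vanishing $\Tr_{n/n-1}\pi_n^s = 0$ for $1\le s \le e$, which is exactly~(\ref{Note}). Since the latter has no constant term, no correction analogous to $\epsilon$ is needed here; otherwise the structure of the argument is identical.

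First, a point of the form $(0,\bfz)\in G(L,M)(g)$ is torsion, so modulo torsion it suffices to verify the identity on the $L$-part $\bfy(\bfl)(\pi_n^s)$. The functional equation $(1+J(\varphi))\circ l(X) = X$, combined with $\varphi(X) = X^p$, rearranges to
\[
l(X) = X - b_1\, l(X^p) - b_2\, l(X^{p^2}) - \cdots - b_d\, l(X^{p^d}).
\]
Applying $\varphi^k$ and evaluating at $X=\pi_n^s$, with the Kummer relation $\pi_n^{sp^r} = \pi_{n-r}^s$, gives for each $0\le k\le d-1$ the identity
\[
l(\pi_{n-k}^s) \;=\; \pi_{n-k}^s - \sum_{r=1}^d b_r\, l(\pi_{n-k-r}^s).
\]

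For $k=0$, I apply $\Tr_{n/n-1}$: by~(\ref{Note}) the term $\pi_n^s$ vanishes (this is exactly where $s\le e$ is used), while each $l(\pi_{n-r}^s)$ already lies in $K'(\pi_{n-1})$ and is multiplied by $p$. Composing with $\Tr_{n-1/n-d}$ and using trace transitivity together with $[K'(\pi_{n-1}):K'(\pi_{n-r})] = p^{r-1}$ collects the correct weights and produces
\[
\Tr_{n/n-d}\, l(\pi_n^s) \;=\; -\sum_{r=1}^d p^r b_r\, \Tr_{n-r/n-d}\, l(\pi_{n-r}^s).
\]
For $k\ge 1$, $l(\pi_{n-k}^s)\in K'(\pi_{n-k})$, so $\Tr_{n/n-d}\, l(\pi_{n-k}^s) = p^k\, \Tr_{n-k/n-d}\, l(\pi_{n-k}^s)$; running the same iterated-trace argument at level $n-k$ with depth $d-k$ and then multiplying through by $p^k$ yields the shifted identity
\[
\Tr_{n/n-d}\, l(\pi_{n-k}^s) \;=\; -\sum_{r=1}^d p^r b_r\, \Tr_{n-r/n-d}\, l(\pi_{n-k-r}^s),
\]
where the terms with $r+k>d$ are handled by noting $l(\pi_{n-k-r}^s)$ already lies in $K'(\pi_{n-d})$, so $\Tr_{n-r/n-d}$ multiplies by $p^{d-r}$ and combines with $p^r b_r$ to give $p^d b_r\, l(\pi_{n-k-r}^s)$.

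Finally, since $\tilde\bfx(\bfF^k\bfm)(\pi_n^s) = l(\pi_{n-k}^s)$ and $\bfy(\bfl)(\pi_n^s) = \sum_{(a,b)\in I_0}\sum_{k=0}^{d-1}\beta_k^{(ab)}\, l(\pi_{n-k}^s)$ is an $\OO_{K'}$-linear combination of the $l(\pi_{n-k}^s)$, the identity propagates to $\bfy(\bfl)(\pi_n^s)$ and then to all of $L$ by $\OO_{K'}$-linear extension, completing the proof. The main obstacle is the bookkeeping in the $k\ge 1$ case, splitting the sum according to whether $r+k\le d$ or $r+k>d$ and tracking the factors of $p$ from the various $[K'(\pi_a):K'(\pi_b)]=p^{a-b}$; this is a direct analogue of the manipulations already carried out for $\varphi^i\circ l$ in Proposition~\ref{Despicable-Laundry-Machine}.
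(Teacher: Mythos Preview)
Your proposal is correct and follows essentially the same approach as the paper's proof: reduce modulo torsion to the $L$-part, use the functional equation $(1+J(\varphi))\circ l = X$ together with the Kummer vanishing~(\ref{Note}) to derive the trace relation for $l(\pi_n^s)$, then repeat for the shifted terms $l(\pi_{n-k}^s)=l((\pi_n^s)^{p^k})$ and combine via the expression for $\bfy(\bfl)$. Your treatment of the $k\ge 1$ case, splitting according to whether $r+k\le d$ or $r+k>d$, is in fact more explicit than the paper, which simply refers back to Proposition~\ref{Despicable-Laundry-Machine} at that step.
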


\begin{proof}
This is similar to Proposition~\ref{Despicable-Laundry-Machine} in Section~\ref{Some special}, so we will provide only a brief proof. 

\begin{eqnarray*} \Tr_{n/n-1}l(\pi_n^i)&=& \Tr_{n/n-1}\left\{ \pi_n^i -[J(\varphi)-J(\varphi)^2+\cdots]\circ X|_{X=\pi_n^i} \right\} \\
&=& \Tr_{n/n-1} \left\{-[J(\varphi)-J(\varphi)^2+\cdots]\circ X|_{X=\pi_n^i}  	\right\}	\\
&=& -p\cdot (J(\varphi)\circ l)(\pi_n^i).
\end{eqnarray*}
The last line is equal to

\begin{eqnarray*} -p\cdot (J(\varphi)\circ l)(\pi_n^i)&=& -p \left\{ b_1 \cdot l(\pi_n^{i\cdot p})+b_2 \cdot  l(\pi_n^{i\cdot p^2})+\cdots+b_d  \cdot l(\pi_n^{i\cdot p^d}) \right\}		\\
&=& -p \left\{ b_1 \cdot  l(\pi_{n-1}^i) + b_2 \cdot  l(\pi_{n-2} ^i)+  \cdots  + b_d  \cdot l(\pi_{n-d}^i) \right\}.
\end{eqnarray*}
Thus by applying $\Tr_{n-1/n-d}$ to it, we have

\begin{eqnarray*} \Tr_{n/n-d} l(\pi_n^i)	&=& -p \cdot b_1 \cdot  \Tr_{n-1/n-d} l(\pi_{n-1}^i)-p^2 \cdot  b_2 \cdot  \Tr_{n-2/n-d}l(\pi_{n-2}^i)		\\
&&-\quad \cdots\quad -p^d  \cdot b_d \cdot  l(\pi_{n-d}^i).
\end{eqnarray*}

Also similar to Proposition~\ref{Despicable-Laundry-Machine}, for $j=1,\cdots, d-1$ we have

\begin{eqnarray*} \Tr_{n/n-d} l((\pi_n^i)^{p^j}) 
&=& -b_1 \cdot p \cdot \Tr_{n-1/n-d} l((\pi_{n-1}^i)^{p^j} )- b_2 \cdot p^2\cdot \Tr_{n-2/n-d} l((\pi_{n-2}^i)^{p^j}) 	\\
&&- \quad \cdots \quad - b_d \cdot p^d \cdot l((\pi_{n-d}^i)^{p^j})	.
\end{eqnarray*}

Thus, we have

\begin{eqnarray*} \Tr_{n/n-d} \bfy (\bfl)|_{X=\pi_n^i} &=& -p \cdot b_1 \cdot \Tr_{n-1/n-d} \bfy (\bfl)|_{X=\pi_{n-1}^i}		\\
&&-p^2 \cdot b_2 \cdot \Tr_{n-2/n-d} \bfy (\bfl)|_{X=\pi_{n-2}^i}		\\
&&-\quad \cdots\quad 		\\
&&-p^d  \cdot b_d \cdot \bfy (\bfl)|_{X=\pi_{n-d}^i}.
\end{eqnarray*}

Similar to Proposition~\ref{Despicable-Laundry-Machine}, we obtain our claim.
\end{proof}

The problem is that we do not know whether these points are useful or not. The extension $K'(\pi_{\infty})/K'$ is not even normal. Its normal closure $K'(\pi_{\infty}, \zeta_{p^{\infty}})/K'$ is not abelian. So, it seems impossible to use Iwasawa Theory, and the author cannot see any other use for them.

\end{subsection}

\begin{subsection}{The Perrin-Riou characteristics, and weak bounds for ranks}		\Label{Case 1}

\hfill

In this section, we apply the construction in Section~\ref{Some special}. As in that section, we suppose $k_{\infty}/\Qp$ is a totally ramified normal extension with $\Gal(k_{\infty}/\Qp)\cong \Z_p^{\times}$. By local class field theory, it is given by a Lubin-Tate group of height $1$ over $\Zp$. In other words, there is $\varphi(X)=X^p+\alpha_{p-1} X^{p-1}+\cdots+\alpha_1X \in \Zp[X]$ with $p|\alpha_i$, $v_p(\alpha_1)=1$ so that

\[ k_{\infty}=\cup_n \Qp(\pi_n) \]
where $\varphi(\pi_n)=\pi_{n-1}$ ($\pi_n\not=0$ for $n > 0$, $\pi_0=0$).

We let $F$ be a number field, $F_{\infty}$ be a $\Zp$-extension of $F$ (i.e., $\Gal(F_{\infty}/F) \cong \Zp$), $A$ be an abelian variety over $F$, and $A'$ be its dual abelian variety over $F$ so that there is a non-degenerate Weil pairing $e_n: A[n]\times A'[n]\to \Z/n\Z$ for every integer $n$, which is non-degenerate and commutative with the action of $G_F$. Let $\bfT=T_pA$, and let $\bfA\stackrel{def}=\varinjlim \bfT/p^n \bfT$.

In this section, we suppose there is only one prime $\mathfrak p$ of $F$ above $p$, $\mathfrak p$ is totally ramified over $F/\Q$, $\mathfrak p$ is totally ramified over $F_{\infty}/F$, $F_{\infty, \mathfrak p}=k_{\infty}$, and $F_{\mathfrak p}=\Qp(\pi_N)$ for some $N\geq 1$.

Let $G/\OO_{F_{\mathfrak p}}$ denote the formal completion of $A'/F_{\mathfrak p}$. As in Section~\ref{Some special}, we assume $G$ has dimension $1$, which means that the group of its logarithms has rank $1$ over $\OO_{F_{\mathfrak p}}$.

\begin{example}
An obvious example that satisfies all these conditions is an elliptic curve $E$ defined over $\rat(\zeta_{p^N})$ with good supersingular reduction at the unique prime $\mathfrak p$ above $p$.
\end{example}

We recall the points $P(\pi_n) \in G(M, L)(\mm_{\Qp(\pi_n)})$ constructed in Section~\ref{Some special}.

\begin{assumption} There is $M'>0$ so that $M' \cdot G(\OO_{k_{\infty}})_{tors}=0$.
\end{assumption}

This assumption is obviously true if $G(\OO_{k_{\infty}})_{tors}$ is finite.

\begin{definition}

\begin{enumerate}[(a)]
\item
Let $M$ be the Dieudonne module $\Hom(G_{/\mathbb F_p}, \hat{CW})$, and $L$ be the set of logarithms of $G$ as defined in Section~\ref{Fontaine}.

\item As in Section~\ref{Some special}, we set
\[ H(X)={\det}_{\Zp} (X\cdot 1_M-\bfF|M)=X^d+a_{d-1}X^{d-1}+\cdots+a_0 \in \Zp[X], \]
and

\begin{eqnarray*} \bar H(X) &\stackrel{def}=& \ds \frac{H(X)}{a_0} = 1+ \ds \frac{a_1}{a_0}X+\cdots+\frac{a_{d-1}}{a_0}X^{d-1}+\frac1{a_0}X^d 		\\
&=&1+b_1X+b_2X^{2}+\cdots+b_dX^{d}.
\end{eqnarray*}

\end{enumerate}
\end{definition}

\begin{definition} From Section~\ref{Fontaine}, recall that there is a natural map $i_G:G\to G(L, M)$, and a map $j_G:G(L, M)\to G$ so that $i_G\circ j_G=p^t$, $j_G\circ i_G=p^t$ for some $t$ which depends on the ramification index $e$. Also, let $i':G\to A'$ be the natural injection from the formal group scheme $G$ to the abelian variety $A'$. We define

\begin{enumerate}[(a)]
\item Where $e=[\Qp(\pi_N):\Qp]=[F_{\mfp}:\Qp]$, let 

$$\{ \pi_{N,1},\cdots, \pi_{N, e}\}=\{ \pi_N^{\sigma} \}_{\sigma \in\Gal(\Qp(\pi_N)/\Qp)}.$$

\item Then, for every $n> N$ and for each $i=1,\cdots,e$, choose $\pi_{n,i}$ so that $\varphi(\pi_{n,i})=\pi_{n-1,i}$.

\item	For $i=1,\cdots, e$,
\[ Q(\pi_{N+n, i})=M'\cdot i'\circ j_G\left(  P(\pi_{N+n, i}) \right) \in A'(F_{n, \mfp}). \]
\end{enumerate}
\end{definition}

\begin{proposition}		\Label{Lunch}
For every $n\geq d$, we have

\begin{eqnarray*} \Tr_{F_{n, \mathfrak p}/F_{n-d, \mathfrak p}} Q(\pi_{N+n, i}) &=& -p\cdot b_1\cdot \Tr_{F_{n-1, \mathfrak p}/F_{n-d, \mathfrak p}} Q(\pi_{N+n-1, i}) \\
&&- p^2 \cdot b_2 \cdot \Tr_{F_{n-2, \mathfrak p}/F_{n-d, \mathfrak p}} Q(\pi_{N+n-2, i})		\\
&& - \quad  \cdots \quad - p^d \cdot b_d\cdot Q(\pi_{N+n-d, i}).
\end{eqnarray*}
\end{proposition}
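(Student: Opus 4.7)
The plan is to reduce the statement directly to Proposition~\ref{Despicable-Laundry-Machine} by pushing the identity for $P(\pi_{N+n,i}) \in G(L,M)(\OO_{F_{n,\mfp}})$ forward via the morphisms $j_G$ and $i'$ and then clearing torsion with $M'$.

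First I would verify that the construction and norm identity of Section~\ref{Some special} apply verbatim to each conjugate sequence $\{\pi_{N+n,i}\}_{n\geq 0}$, not only to the distinguished sequence $\pi_{N+n}=\pi_{N+n,1}$. Since $\varphi \in \Zp[X]$, applying any $\sigma \in \Gal(F_{\mfp}/\Qp)$ to the relation $\varphi(\pi_{N+n})=\pi_{N+n-1}$ gives $\varphi(\pi_{N+n,i})=\pi_{N+n-1,i}$, so the Lubin--Tate-type recursion is preserved. The only numerical input used in the proof of Proposition~\ref{Despicable-Laundry-Machine} beyond this recursion is the identity $\Tr_{F_{n,\mfp}/F_{n-1,\mfp}}(\pi_{N+n,i}) = -\alpha_{p-1}$; this follows because the $p$ Galois conjugates of $\pi_{N+n,i}$ over $F_{n-1,\mfp}$ are precisely the roots of $\varphi(X)-\pi_{N+n-1,i}=0$ (each layer of the $\Zp$-extension being Galois), whose sum is $-\alpha_{p-1}$. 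Hence the proof of Proposition~\ref{Despicable-Laundry-Machine} goes through for each $i$, yielding
\begin{eqnarray*}
\Tr_{F_{n,\mfp}/F_{n-d,\mfp}} P(\pi_{N+n,i}) &=& -pb_1 \Tr_{F_{n-1,\mfp}/F_{n-d,\mfp}} P(\pi_{N+n-1,i})\\
&& -\cdots - p^d b_d \, P(\pi_{N+n-d,i})
\end{eqnarray*}
modulo torsions in $G(L,M)(\OO_{F_{n,\mfp}})$, for every $n\geq d$ and every $i=1,\dots,e$.

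Next I would apply $i'\circ j_G$ to both sides. Both $j_G:G(L,M)\to G$ and $i':G\hookrightarrow A'$ are morphisms of (formal) group schemes defined over $\OO_{F_{\mfp}}$; hence they are $\Gal(\overline{F_{\mfp}}/F_{\mfp})$-equivariant and additive, so they commute with the Galois traces appearing in the identity and with the scalar multiplications by $-p^k b_k$. They also send torsion to torsion. The resulting identity lives in $A'(F_{n,\mfp})$ and is valid modulo the image of the torsion of $G(\OO_{F_{n,\mfp}}) \subseteq G(\OO_{k_\infty})$ under $i'$.

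Finally, multiplying through by $M'$ annihilates that ambiguity by the assumption $M' \cdot G(\OO_{k_\infty})_{\mathrm{tors}} = 0$: for any torsion element $t \in G(\OO_{F_{n,\mfp}})$ one has $M'\cdot i'(t) = i'(M' t)=0$ in $A'(F_{n,\mfp})$. Since by definition $Q(\pi_{N+n,i}) = M'\cdot i'\circ j_G(P(\pi_{N+n,i}))$ and $M'\cdot i'\circ j_G$ is Galois-equivariant (hence commutes with $\Tr$), this converts the congruence into the exact identity claimed in the proposition. There is no serious obstacle; the only point worth double-checking is the Galois-conjugate invariance of the trace computation underlying Proposition~\ref{Despicable-Laundry-Machine}, which is handled by the observation above that $\varphi$ has $\Zp$-coefficients and each step of the $\Zp$-extension is a degree-$p$ Galois extension.
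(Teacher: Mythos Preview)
Your proposal is correct and follows essentially the same route as the paper: apply Proposition~\ref{Despicable-Laundry-Machine}, push forward via $i'\circ j_G$, and kill the torsion ambiguity with $M'$. The paper's proof is a two-line remark to this effect; your version is more careful in that you explicitly justify why Proposition~\ref{Despicable-Laundry-Machine} applies to each conjugate sequence $\{\pi_{N+n,i}\}_n$ (via $\varphi\in\Zp[X]$ and the trace computation $\Tr(\pi_{N+n,i})=-\alpha_{p-1}$), a point the paper leaves implicit.
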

\begin{proof}
Note that $M'$ annihilates every torsion of $G(\OO_{F_{n-d, \mfp}})$. Thus, the claim follows immediately from Proposition~\ref{Despicable-Laundry-Machine}.
\end{proof}

\begin{definition}[Relaxed Selmer groups] \Label{Relaxed Selmer}
Where $L$ is a number field,
\[ \Selr(\bfA/L)\stackrel{def}= \ker \left( H^1(L, \bfA)\to \prod_{v\nmid p} \ds \frac{H^1(L_v, \bfA)}{H^1_f(L_v, \bfA)} \right)\]
where 

\[ H^1_f (L_v, \bfA)\stackrel{def}=H^1_{un}(L_v, \bfA)\stackrel{def}=H^1(L_v^{un}/L_v, \bfA^{G_{L_v^{un}}}).\]
\end{definition}
In fact, when ${G_{L_v^{un}}}$ acts trivially on $A$ (i.e., good reduction at $v$), $H^1_{un}(L_v, \bfA)$ is the standard definition for a local condition $H^1_f(L_v, A)$. (Local conditions for a finite number of primes not above $p$ do not affect our result.)

Set

\[ \Gamma\stackrel{def}=\Gal(F_{\infty}/F),\]
\[ \Lambda\stackrel{def}=\Zp[[\Gamma]]\cong \Zp[[X]] \]
where the last isomorphism is (non-canonically) given by choosing a topological generator $\gamma$ of $\Gamma$, and set $\gamma=X+1$.

\begin{assumption}		\Label{Fries}
Let $M^{\vee}$ denote the Pontryagin dual $\Hom(M,\rat/\Z)$. We assume
\[ \rank_{\Lambda} \Selr(\bfA/ F_{\infty})^{\vee}=[F_{\mfp}:\Qp]=e.  \]
\end{assumption}
If $\dim G$ is not $1$, then we probably need to multiply it to $e$ in Assumption~\ref{Fries}.
We can show Assumption~\ref{Fries} is true if $\Sel(\bfA/F)$ or $\Sel(\bfA/F_n)^{\chi}$ for some primitive character $\chi$ of $\Gal(F_n/F)$ is finite. Although there are some notable counterexamples to this assumption (for instance, when $F_{\infty}$ is the anti-cyclotomic extension), for all intents and purposes, it is a safe assumption.

Let 

\[ S_{tor}=\left( \Selr(\bfA/ F_{\infty})^{\vee} \right)_{\Lambda-torsion}.\]
If we assume Assumption~\ref{Fries}, then there is a short exact sequence

\begin{eqnarray}		\Label{Ice-Cream}	 0  \to \Selr(\bfA/F_{\infty})^{\vee}/S_{tor} \to \Lambda^e \to C \to 0
\end{eqnarray}
for a finite group $C$. 

\begin{notation}
\begin{enumerate}
\item For each $n\geq 0$,

\[ \Gamma_n=\Gamma/\Gamma^{p^n}, \quad \Lambda_n=\Zp[\Gamma_n]. \]
\item For a group $M$ on which $\Gamma$ acts,

\[ M_{/\Gamma^{p^n}}=M/\{ (1-a)\cdot m \; |\; a \in \Gamma^{p^n}, m \in M\}. \]
Equivalently, where $\gamma$ is a topological generator of $\Gamma$, 

\[ M_{/\Gamma^{p^n}}=M/(1-\gamma^{p^n})\cdot M. \]
\end{enumerate}
\end{notation}

\begin{lemma} Suppose there is an exact sequence of $\Lambda$-modules

\[ 0 \to A_1\to A_2 \to A_3 \to A_4 \to 0, \]
and $A_1$ and $A_4$ are finite. Then, for every $n$, the orders of the kernel and cokernel of

\[ \left( A_2 \right)_{/\Gamma^{p^n}} \to \left( A_3 \right)_{/\Gamma^{p^n}} \]
are bounded by $|A_1|\cdot|A_4|$.
\end{lemma}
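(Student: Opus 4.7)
The plan is to split the given four-term exact sequence into two short exact sequences by setting $B = \operatorname{im}(A_2 \to A_3) = \ker(A_3 \to A_4)$, so that
\[ 0 \to A_1 \to A_2 \to B \to 0 \qquad \text{and} \qquad 0 \to B \to A_3 \to A_4 \to 0,\]
and then apply the right-exact functor of $\Gamma^{p^n}$-coinvariants $(-)_{/\Gamma^{p^n}}$. Since $\Gamma^{p^n}$ is topologically generated by $\gamma^{p^n}$, taking coinvariants is the same as quotienting by multiplication by $1-\gamma^{p^n}$, and the associated first derived functor of coinvariants is the kernel of this same operator, i.e.\ the $\Gamma^{p^n}$-invariants. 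Hence each short exact sequence produces a four-term exact sequence in group homology.

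From the first short exact sequence I will read off that $(A_2)_{/\Gamma^{p^n}} \twoheadrightarrow B_{/\Gamma^{p^n}}$ is surjective with kernel a quotient of $(A_1)_{/\Gamma^{p^n}}$, and in particular of order at most $|A_1|$. From the second I will obtain
\[ A_4^{\Gamma^{p^n}} \to B_{/\Gamma^{p^n}} \to (A_3)_{/\Gamma^{p^n}} \to (A_4)_{/\Gamma^{p^n}} \to 0.\]
The cokernel of $B_{/\Gamma^{p^n}} \to (A_3)_{/\Gamma^{p^n}}$ is thus $(A_4)_{/\Gamma^{p^n}}$, whose order is at most $|A_4|$, and its kernel is a quotient of $A_4^{\Gamma^{p^n}}$. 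Because $A_4$ is finite and $1-\gamma^{p^n}$ is an endomorphism of it, its kernel and image have the same order, so $|A_4^{\Gamma^{p^n}}| = |(A_4)_{/\Gamma^{p^n}}| \leq |A_4|$.

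Composing, the kernel of $(A_2)_{/\Gamma^{p^n}} \to (A_3)_{/\Gamma^{p^n}}$ is the preimage in $(A_2)_{/\Gamma^{p^n}}$ of the kernel of $B_{/\Gamma^{p^n}} \to (A_3)_{/\Gamma^{p^n}}$, hence an extension whose outer terms are bounded by $|A_1|$ and $|A_4|$ respectively, giving the desired bound $|A_1|\cdot|A_4|$; the cokernel is bounded by $|A_4| \leq |A_1| \cdot |A_4|$. No step is actually hard: the entire argument is a routine diagram chase, the only non-tautological input being the equality $|A^{\Gamma^{p^n}}| = |A_{/\Gamma^{p^n}}|$ for finite $A$, a standard consequence of comparing kernel and image of the endomorphism $1-\gamma^{p^n}$ on a finite abelian group.
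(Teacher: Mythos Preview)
Your proof is correct and follows essentially the same route as the paper: both split the four-term sequence into two short exact sequences (your $B$ is exactly the paper's $A_2/A_1$), take $\Gamma^{p^n}$-coinvariants, and use the associated long exact sequence with connecting map from $A_4^{\Gamma^{p^n}}$. You supply slightly more detail than the paper (which simply writes ``Our claim follows immediately'' after the two exact sequences), and your remark that $|A_4^{\Gamma^{p^n}}| = |(A_4)_{/\Gamma^{p^n}}|$ is correct but unnecessary here, since both are trivially bounded by $|A_4|$.
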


\begin{proof}
The exact sequence induces two short exact sequences

\[ 0 \to A_1 \to A_2 \to A_2/A_1 \to 0,\]
\[ 0 \to A_2/A_1 \to A_3 \to A_4 \to 0, \]
which in turn induce

\[ (A_1)_{/\Gamma^{p^n}} \to (A_2)_{/\Gamma^{p^n}} \to (A_2/A_1)_{/\Gamma^{p^n}} \to 0,\]
\[ (A_4)^{\Gamma^{p^n}} \to (A_2/A_1)_{/\Gamma^{p^n}} \to (A_3)_{/\Gamma^{p^n}} \to (A_4)_{/\Gamma^{p^n}} \to 0.\]
Our claim follows immediately.
\end{proof}

It is not difficult to show $\Selr(\bfA/F_n) \to \Selr(\bfA/F_{\infty})^{\Gamma^{p^n}}$ has bounded kernel and cokernel for every $n$. For the sake of argument, we assume it is an isomorphism, which will not hurt the integrity of our argument. 

The map in (\ref{Ice-Cream}) induces the following:

\[
\alpha_n: 
(\Selr(\bfA/F_{\infty})^{\vee}/S_{tor})_{/\Gamma^{p^n}} \to \Lambda_n^e
\]
which induces

\[ \alpha_n': \Selr(\bfA/F_n)^{\vee} \to \Lambda_n^e \]
by the above assumption. We note that there is a map

\[ \beta_n:  A'(F_{n, \mathfrak p}) \to \Selr(\bfA/F_n)^{\vee} \]
given by the local Tate duality which states that $A'(F_{n, \mathfrak p})$ is the Pontryagin dual of $H^1(F_{n, \mathfrak p}, \bfA)/A(F_{n, \mathfrak p})\otimes \Qp/\Zp$.

\begin{definition}
\begin{enumerate}[(a)]
\item Let $R(\pi_{N+n, i}) \in \Lambda_n^e$ be the image of $Q(\pi_{N+n, i})$ under $\alpha_n' \circ \beta_n$.

\item Let $\Proj_n^m$ be the natural projection from $\Lambda_m$ to $\Lambda_n$ ($m\geq n$).
\end{enumerate}
\end{definition}

\bigskip

Let $H^{\vee}(X)= X^d+pb_1X^{d-1}+p^2b_2X^{d-2}+\cdots+p^db_d=0$. By Proposition~\ref{Lunch} we have

\begin{eqnarray}		\Label{Smolensk} \Proj_{n-d}^n R(\pi_{N+n, i}) +\sum_{k=1}^d p^k b_k \Proj_{n-d}^{n-k} R(\pi_{N+n-k, i})=0
\end{eqnarray}
for each $i$.

Here we recall Perrin-Riou's lemma: In the following $\Lambda_{\alpha}$ is the set of power series $f(T) \in \overline{\Q}_p[[T]]$ satisfying $|f(x)| < C |1/\alpha^n|$ for some fixed $C>0$ for every $n \geq 1$ and $x \in \C_p$ with $|x| < |1/\sqrt[p^n]p|$.

\begin{lemma}[\cite{Perrin-Riou-1}~Lemme~5.3.]	\Label{Perrin-Riou-Lemma}
Let $R(T)=\sum a_kT^k$ be a monic polynomial of $\Zp[T]$ whose roots are simple, non-zero, and have $p$-adic valuation strictly less than $1$. Suppose $f^{(n)}$'s are elements of $\Lambda$ satisfying the recurrence relation

\[ \sum_k a_k f^{(n+k)} \equiv 0 \pmod {(T+1)^{p^n}-1}. \]
Then, for every root $\alpha$ of $R(T)$, there is unique $f_{\alpha} \in \Lambda_{\alpha}$ so that for some fixed constant $c$, 

\[ f^{(n)}\equiv \sum_{\alpha} f_{\alpha} \alpha^{n+1} \pmod{c^{-1}((T+1)^{p^n}-1) \Lambda} \]
for every $n$.
\end{lemma}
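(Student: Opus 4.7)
The plan is to adapt classical linear-recurrence theory to the Iwasawa-theoretic setting. Since the roots $\alpha_1,\ldots,\alpha_d$ of $R$ are simple, any scalar solution of $\sum_k a_k u_{n+k}=0$ can be written $u_n=\sum_\alpha c_\alpha \alpha^{n+1}$; in our setting the scalars $c_\alpha$ are replaced by the desired elements $f_\alpha\in \Lambda_\alpha$, and the identity is only valid modulo $(T+1)^{p^n}-1$ up to the universal constant $c$.

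First I would work at a single level. Fix $n$ and regard the block $(f^{(n)},\ldots,f^{(n+d-1)})$ together with the mod $(T+1)^{p^n}-1$ recurrence as prescribing the whole tail of the sequence modulo that ideal. Invert the Vandermonde matrix $V=(\alpha^{k+1})_{\alpha,k}$, which is nonsingular because the roots $\alpha$ are distinct, to produce candidate coefficients $f_\alpha^{(n)}\in \overline{\Q}_p[[T]]/((T+1)^{p^n}-1)$ satisfying $\sum_\alpha \alpha^{n+1+k} f_\alpha^{(n)} \equiv f^{(n+k)} \pmod{(T+1)^{p^n}-1}$ for $k=0,\ldots,d-1$. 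The denominators introduced are controlled by the Vandermonde discriminant $D=\prod_{\alpha\neq\beta}(\alpha-\beta)$, which is nonzero and depends only on $R$.

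Next I would compare $f_\alpha^{(n)}$ with the reduction of $f_\alpha^{(n+1)}$ modulo $(T+1)^{p^n}-1$. Both solve the same Vandermonde system up to the precision allowed by the recurrence, so their difference, after multiplication by $D$, is divisible by $(T+1)^{p^n}-1$; a direct induction shows that passing from level $n$ to $n+1$ introduces one extra factor of $\alpha^{-(n+1)}$ into the error. The hypothesis $v_p(\alpha)<1$ is exactly what forces these factors to grow strictly slower than the $p$-adic radii $p^{-1/p^n}$ governing the definition of $\Lambda_\alpha$, so the sequence $(f_\alpha^{(n)})_n$ is Cauchy in each of the seminorms defining $\Lambda_\alpha$ and converges to a well-defined $f_\alpha \in \Lambda_\alpha$. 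Uniqueness follows immediately from the non-vanishing of $V$.

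The main obstacle is the simultaneous bookkeeping of the Vandermonde denominator $D$ and the accumulated $\alpha^{-n}$ factors, together with the extraction of one universal constant $c$ valid for every $n$ in the final congruence. I expect $c$ can be taken to be $D$, possibly multiplied by a bounded power of $p$ that absorbs the comparison between $v_p(\alpha)$ and the geometry of the disks $|x|<|1/\sqrt[p^n]p|$; making this quantitative content precise, and verifying that the strict inequality $v_p(\alpha)<1$ is both necessary and sufficient for the growth condition to fit inside $\Lambda_\alpha$, is where the real work of the lemma lies.
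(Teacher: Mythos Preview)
Your proposal is correct and is precisely the ``simple linear algebra'' argument the paper has in mind: the paper's own proof consists entirely of the sentence ``Simple linear algebra. See \cite{Perrin-Riou-1},'' and the Vandermonde inversion you describe is exactly Perrin-Riou's method. Your sketch is in fact considerably more detailed than what the paper provides.
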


\begin{proof} Simple linear algebra. See \cite{Perrin-Riou-1}.
\end{proof}

Since we assume $\bfF$ is a topological nilpotent on $M$, all the roots of $H^{\vee}(X)=0$ have $p$-adic valuation less than $1$.

Thus, by Lemma~\ref{Perrin-Riou-Lemma} and (\ref{Smolensk}), for each root $\alpha$ of $H^{\vee}(X)$, there is $f_{\alpha, i} \in \Lambda_{\alpha}^e$ associated to $\{ R(\pi_{N+n, i}) \}_n$.

\begin{definition}
Choose a generator $g_{tor}\in \Lambda$ of the characteristic ideal of $(\Selr(\bfA/F_{\infty})^{\vee})_{\Lambda-torsion}$. Then we let

\[ \bfL_{\alpha} \stackrel{def}= g_{tor}\times \det [f_{\alpha, 1}, \cdots,  f_{\alpha, e}].\]
\end{definition}

Suppose $\chi_n$ is a primitive character of $\Gal(F_n/F)$, and $\zeta_{p^n}=\chi_n (\gamma)$. Suppose $g_{tor}(\zeta_{p^n}-1)\not=0$ (true if $n$ is large enough). Then, we can see that

\vspace{3mm}
\begin{center}
``$\Sel_p(\bfA/F_n)^{\chi_n}$ is infinite $\leftrightarrow$ the $\chi_n$-part of the cokernel of $\alpha'\circ\beta_n$ is infinite

$\leftrightarrow$ $\{ R(\pi_{N+n,i})^{\chi_n} \}_{i=1,\cdots,e}$ generates a subgroup of $(\Lambda_n^e)^{\chi_n}$ of infinite index

$\longrightarrow$ $\left. \det [f_{\alpha, 1}, \cdots,  f_{\alpha, e}] \right|_{\gamma=\zeta_{p^n}} =0$.''
\end{center}

\vspace{2mm}

And, in such a case,
\begin{eqnarray}		\Label{Three Go}
\corank_{\Zp[\zeta_{p^n}]} \Sel_p(\bfA/F_n)^{\chi_n} \leq e.
\end{eqnarray}

Consider the following Perrin-Riou's lemma.

\begin{lemma}[\cite{Perrin-Riou-1}~Lemme~5.2.] Let $\lambda=v_p(\alpha)$.
Suppose $f \in \Lambda_{\alpha}$. Let $s_m$ be the number of positive integers $n$ ($n\leq m$) such that $f(\zeta_{p^n}-1)=0$ for every $p^n$-th primitive root of unity $\zeta_{p^n}$. If $s_m -\lambda m \to \infty$ as $m\to\infty$, then $f=0$.
\end{lemma}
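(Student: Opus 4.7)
The strategy is to factor out the forced cyclotomic zeros of $f$ and use the growth bound defining $\Lambda_{\alpha}$ together with the $p$-adic maximum modulus principle to produce an upper bound $s_m \le \lambda m + O(1)$, which directly contradicts the hypothesis. Assume for contradiction that $f \ne 0$; after dividing by a power of $T$ we may assume $f(0) \ne 0$ (this operation affects only constants and does not change $s_m$). Let $S_m \subseteq \{1,\dots,m\}$ be the set of $n$ for which $f$ vanishes on every primitive $p^n$-th root of unity, and set
\[
Q_m(T) \;:=\; \prod_{n \in S_m} \Phi_{p^n}(T+1), \qquad d_m \;:=\; \deg Q_m \;=\; \sum_{n \in S_m} \phi(p^n).
\]
Then write $f(T) = Q_m(T)\, g_m(T)$; the quotient $g_m$ is analytic wherever $f$ is, because $Q_m$ has simple roots (at the points $\zeta_{p^n}-1$) and $f$ vanishes at each of them to at least order one. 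Note also $Q_m(0) = \prod_{n \in S_m} \Phi_{p^n}(1) = p^{s_m}$.

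Pick $r$ slightly less than $p^{1/p^m}$, so that $r > 1$. Every root $\zeta_{p^n}-1$ of $Q_m$ satisfies $|\zeta_{p^n}-1|_p = p^{-1/\phi(p^n)} < 1 < r$, so on the sphere $|T| = r$ one has $|T - (\zeta_{p^n}-1)|_p = r$, and hence $|Q_m(T)|_p = r^{d_m}$. The defining growth bound of $\Lambda_{\alpha}$ applied at $n = m$ gives $|f(T)|_p \le C\,p^{\lambda m}$ on the disk $|T| < p^{1/p^m}$, so $|g_m(T)|_p \le C\,p^{\lambda m}/r^{d_m}$ on $|T| = r$. By the $p$-adic maximum modulus principle for analytic functions on a closed disk (equivalently, the formula $|g_m(T)|_p = \max_k |b_k|_p\, r^k$ for $g_m = \sum b_k T^k$ at generic $|T|=r$), the same bound persists at $T = 0$, giving $|g_m(0)|_p \le C\,p^{\lambda m}/r^{d_m}$.

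Combining with $|Q_m(0)|_p = p^{-s_m}$ via $|f(0)|_p = |Q_m(0)|_p\, |g_m(0)|_p$, taking $\log_p$, and using $d_m \log_p r \ge 0$ (so the $-d_m \log_p r$ term can be dropped to give a weaker inequality), one obtains
\[
s_m \;\le\; \lambda m + \bigl(\log_p C - \log_p |f(0)|_p\bigr) \;=\; \lambda m + O(1),
\]
where the $O(1)$ constant depends only on $f$. This contradicts $s_m - \lambda m \to \infty$, so $f = 0$. The main point to verify is really just the standard analytic bookkeeping: that $g_m$ is truly analytic (immediate from simplicity of the roots of $Q_m$) and that the sup of $|g_m|$ on the closed disk equals the sup on its boundary, both of which are classical in the Amice--Lang theory of $p$-adic analytic functions, so no genuine obstacle is anticipated.
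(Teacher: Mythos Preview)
Your overall strategy—factor out the forced cyclotomic divisors, bound the quotient via the $p$-adic maximum modulus principle, and compare values at $T=0$—is the standard one and is essentially Perrin-Riou's own argument (the paper itself gives no proof, only the citation). There is, however, one slip, traceable to what is almost certainly a misprint in the paper's definition of $\Lambda_\alpha$: the radii in the growth condition should be $p^{-1/p^n}<1$, not $p^{1/p^n}>1$. The interpolating functions produced by the companion lemma (Perrin-Riou's Lemme~5.3) are analytic only on the \emph{open} unit disk, so your choice of $r>1$ is not legitimate. Indeed, under your literal reading any nonzero $f\in\Lambda_\alpha$ would be bounded on a disk strictly containing the closed unit disk, hence would have only finitely many zeros there, and the lemma would become trivial—much stronger than what actually holds in Perrin-Riou's setting.

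The repair is minor. Take $r$ just below $p^{-1/p^m}$ (or apply the growth bound at level $m+1$ and set $r=p^{-1/p^m}$ exactly). Since
\[
|\zeta_{p^n}-1|_p \;=\; p^{-1/((p-1)p^{n-1})} \;<\; p^{-1/p^m}\qquad\text{for every }n\le m,
\]
all roots of $Q_m$ still lie strictly inside $|T|\le r$, and your identity $|Q_m|_r=r^{d_m}$ (with $|\cdot|_r$ the multiplicative Gauss norm) survives. What changes is that now $\log_p r<0$, so the term $d_m\log_p r$ cannot simply be discarded as you do. But
\[
d_m \;=\; \sum_{n\in S_m}(p-1)p^{n-1} \;\le\; p^m-1,\qquad -\log_p r \;\approx\; \frac{1}{p^m},
\]
so $|d_m\log_p r|<1$ is bounded independently of $m$. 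This correction is absorbed into the $O(1)$, and your conclusion $s_m\le\lambda m+O(1)$ follows exactly as written.
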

In other words, $s_m =\lambda m +O(1)$ if $f \not=0$.

She assumed $0\leq \lambda <1$. But, in fact, since we assume $\bfF$ is a topological nilpotent, $\lambda<1$, so that condition is unnecessary.

We can modify Perrin-Riou's proof slightly, and obtain the following:

\begin{proposition}			\Label{ZeroGo}
\begin{enumerate}[(a)]
\item If $\bfL_{\alpha}\not=0$, then for some fixed $C$

\[ \corank_{\Zp} \Sel_p(\bfA/F_n) \leq e(p-1) \times \left\{ p^{n-1}+p^{n-2}+ \cdots+ p^m \right\}+C\]
where $n-m = \lambda n +O(1)$.

\item If any root $\alpha$ is a unit, then $\corank_{\Zp} \Sel_p(\bfA/F_n)$ is bounded by the number of roots of $\bfL_{\alpha}$ (counting multiplicity).

\end{enumerate}
\end{proposition}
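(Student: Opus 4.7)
The plan is to follow Perrin-Riou's template in \cite{Perrin-Riou-1} closely, using the chain of implications displayed just above the statement as the bridge between the Selmer side and the analytic object $\bfL_\alpha$. The first step would be the character decomposition
$$\corank_{\Zp}\Sel_p(\bfA/F_n) \;=\; \sum_{k=0}^{n} \phi(p^k)\cdot\corank_{\Zp[\zeta_{p^k}]}\Sel_p(\bfA/F_n)^{\chi_k},$$
where $\chi_k$ is any character of $\Gamma_n$ of exact order $p^k$ (a single Galois orbit per $k$), and $\phi(p^k)=(p-1)p^{k-1}$ for $k\geq 1$. I will call a level $k$ \emph{bad} when $\Sel_p(\bfA/F_n)^{\chi_k}$ is infinite. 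The chain of implications then gives that a bad level forces $\bfL_\alpha(\zeta_{p^k}-1)=0$, and (\ref{Three Go}) bounds its contribution to $\corank_{\Zp}$ by $e\cdot\phi(p^k)$; good levels contribute zero.

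For part (a), I would apply Perrin-Riou's Lemme~5.2 to $\bfL_\alpha\in\Lambda_\alpha$: assuming $\bfL_\alpha\neq 0$, the number of bad levels $k\leq n$ is at most $\lambda n+O(1)$. The worst arrangement places these bad levels at the top of the range $\{m+1,\ldots,n\}$ with $n-m=\lambda n+O(1)$, whereupon summing the per-level contributions produces
$$\corank_{\Zp}\Sel_p(\bfA/F_n)\;\leq\; e\sum_{k=m+1}^{n}(p-1)p^{k-1}+O(1)\;=\;e(p-1)\bigl(p^{n-1}+p^{n-2}+\cdots+p^m\bigr)+O(1),$$
which is the desired bound. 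The $O(1)$ absorbs contributions from any bad small levels before the asymptotic regime of Perrin-Riou's lemma sets in.

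For part (b), one chooses a root $\alpha$ of $H^\vee$ with $v_p(\alpha)=0$ and forms the corresponding $\bfL_\alpha$. Then $\Lambda_\alpha$ coincides with $\overline{\mathbb Z}_p[[T]]$ up to a bounded normalisation, so by Weierstrass preparation the nonzero series $\bfL_\alpha$ has only finitely many zeros in the open unit disk. A bad level $k$ corresponds to $\zeta_{p^k}-1$ being such a zero (giving a full Galois orbit of $\phi(p^k)$ zeros); a refinement of the finite/infinite alternative of (\ref{Three Go}), using the matrix presentation (\ref{Ice-Cream}) together with the recursion (\ref{Smolensk}), shows that the $\chi_k$-corank equals the multiplicity with which $T-(\zeta_{p^k}-1)$ divides $\bfL_\alpha$. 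Summing over $k$ then bounds $\corank_{\Zp}\Sel_p(\bfA/F_n)$ by the total number of zeros of $\bfL_\alpha$, counted with multiplicity.

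The hard part is not the quantitative Lemme~5.2 bookkeeping but the qualitative refinements that justify it. In Part (a) one has to verify that if $\bfL_\alpha$ vanishes at a single $\zeta_{p^k}-1$, it vanishes at the whole Galois orbit, so that Perrin-Riou's counting function $s_m$ is the right statistic; this follows from Galois-invariance of the $\Zp$-corank of $\Sel_p(\bfA/F_n)^{\chi_k}$, but requires explicit checking. In Part (b) the delicate point is upgrading the coarse dichotomy of (\ref{Three Go}) to an exact multiplicity match---this is the step most likely to need an extra argument beyond what the preceding sections directly supply.
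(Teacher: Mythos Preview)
Your approach is essentially the same as the paper's: character decomposition, the chain of implications preceding (\ref{Three Go}) to convert infinite $\chi_k$-Selmer into a zero of $\bfL_\alpha$ at $\zeta_{p^k}-1$, Perrin-Riou's Lemme~5.2 to bound how many levels can be bad, and then the per-level bound (\ref{Three Go}). The paper's own proof is extremely terse (it introduces $t_n$, the number of primitive $p^n$-th roots of unity that are zeros of $\bfL_\alpha$, applies ``Perrin-Riou's proof'' to get $\sum_{m\le n}t_m<e\lambda n+O(1)$, and then invokes (\ref{Three Go}); for (b) it simply says integrality gives finitely many roots), so your write-up is in fact a more careful version of the same argument, including the observation that the worst case places the bad levels at the top of $\{1,\dots,n\}$.

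One remark on part~(b): you aim for an exact equality between the $\chi_k$-corank and the multiplicity of $\zeta_{p^k}-1$ as a root of $\bfL_\alpha$. The paper does not attempt this and merely asserts the bound; moreover, for the stated inequality you only need $\corank_{\Zp[\zeta_{p^k}]}\Sel_p^{\chi_k}$ to be bounded by the order of vanishing, not equal to it. That inequality is the natural consequence of the matrix picture (if the $\chi_k$-part of the map $\alpha_n'\circ\beta_n$ has cokernel of $\Zp[\zeta_{p^k}]$-rank $r$, then the $e\times e$ matrix of images has rank $e-r$, forcing the determinant to vanish to order at least $r$ along the Galois orbit). So your instinct that (b) needs an extra step beyond the bare dichotomy of (\ref{Three Go}) is correct, but the step required is the one-sided inequality, not the full identification.
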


\begin{proof}
Let $t_n$ be the number of the primitive $p^n$-th roots of unity which are roots of $\bfL_{\alpha}$.

By applying Perrin-Riou's proof for the above lemma, we get

\[ \sum_{m\leq n} t_m < e\lambda n +O(1). \]
Then we obtain our claim by (\ref{Three Go}).

If $\alpha$ is a unit, then $\bfL_{\alpha}$ is integral, so it has a finite number of roots. Thus, (b) is clear.
\end{proof}

This is a rough bound unless $H^{\vee}(X)$ has a unit root (i.e., unless the abelian variety has good ``in-between'' reduction). Probably it is possible to obtain a slightly better bound (ideally, something like ``$e(p-1)\times \{ p^{n-1}-p^{n-2}+\cdots \}$''), but not a substantially better one from $\bfL_{\alpha}$ alone, because any power series in $\Lambda_{\alpha}$ has an infinite number of roots. (For example, see R. Pollack's $\log_p^{\pm}$, \cite{Pollack}).

Thus, we need a new tool, and perhaps a new Selmer group. There is precisely such a tool in Sprung's $\sharp/\flat$-decomposition theory (\cite{Sprung}), and we will present our result in that direction in the next section.

Lastly, we want to discuss how Perrin-Riou obtained the result that $\rank E(\Q(\mu_{\infty}))$ is bounded. As stated above, it does not seem possible to obtain a finite bound from $\bfL_{\alpha}$ alone. However, she noted that her points $P_n \in E(\Q_{p, n})$ satisfy

\begin{eqnarray}			\Label{PR-Relations}
\Tr_{\Q_{p, n+1}/\Q_{p, n}}P_{n+1}-a_p P_n+P_{n-1}=0.
\end{eqnarray}
This is more sophisticatead than the relation $\Tr_{\Q_{p, n}/\Q_{p, n-2}}P_{n}-a_p \Tr_{\Q_{p, n-1}/\Q_{p, n-2}}P_{n-1}+p P_{n-2}=0$. She used these relations skilfully to obtain her result. Indeed, with the benefit of hindsight, we now know that recognizing such relations is the first step of the $\pm$-Iwasawa Theory, the $\sharp/\flat$-Iwasawa Theory, and so on.

In fact, in the next section, we will construct points satisfying relations analogous to (\ref{PR-Relations}), and use them to find a finite bound for $E(F_{\infty})$ where $F$ is ramified under some conditions. But, because the field is ramified, the relation will be given by matrices which vary depending on $n$.  
\end{subsection}
\end{section}

\begin{section}{Refined Local Points, Sprung's $\sharp/\flat$-Decomposition, and Finiteness of Ranks}		\Label{Case 2}
In this section, we consider only elliptic curves for simplicity. Take an elliptic curve $E$ over a number field $F$. Except that, our setting is the same as Section~\ref{Case 1}. But for readers' convenience, we will repeat our conditions and assumptions.

As in that section, we suppose

\begin{enumerate}
\item $k_{\infty}/\Qp$ is a totally ramified normal extension with $\Gal(k_{\infty}/\Qp)\cong \Z_p^{\times}$. By local class field theory, it is given by a Lubin-Tate group of height $1$ over $\Zp$. In other words, there is $\varphi(X)=X^p+\alpha_{p-1} X^{p-1}+\cdots+\alpha_1X \in \Zp[X]$ with $p|\alpha_i$, $v_p(\alpha_1)=1$ so that

\[ k_{\infty}=\cup_n \Qp(\pi_n) \]
where $\varphi(\pi_n)=\pi_{n-1}$ ($\pi_n\not=0$ for $n\geq 0$, $\pi_0=0$).

\item We let $F$ be a number field, and $F_{\infty}$ be a $\Zp$-extension of $F$. Since $E$ is an elliptic curve, its dual abelian variety is itself. Let $\bfT=T_pE$, and let $\bfA\stackrel{def}=\cup_n E[p^n]$.

\item We suppose there is only one prime $\mathfrak p$ of $F$ above $p$,  $\mathfrak p$ is totally ramified over $F_{\infty}/F$, $F_{\infty, \mathfrak p} = k_{\infty}$, and $F_{\mathfrak p}=\Qp(\pi_N)$ for some $N\geq 1$.

\item We set

\[ H(X)={\det}_{\Zp} (X\cdot 1_M-\bfF|M)=X^2-a_pX+p. \]
(Then, $a_p=1+N\mfp - \# \tilde E(\OO_{F_{\mfp}}/\mm_{\OO_{F_{\mfp}}})$). And, we set

\begin{eqnarray*} \bar H(X) \stackrel{def}= \ds \frac{H(X)}p &=& 1- \ds \frac{a_p}p X+\frac1p X^d 		\\
&=&1+b_1X+b_2X^{2}.
\end{eqnarray*}
\item We assume $E$ has \textbf{good supersingular reduction} at $\mathfrak p$. 

\end{enumerate}

\bigskip

\begin{subsection}{Fontaine's functor (revisited), and our assumptions.}
\hfill

Let $G$ be the formal group scheme given by the formal completion of $E/F_{\mfp}$, and let $M$ be the Dieudonne module of $G_{/\mathbb F_p}$, and $L$ be the set of logarithms of $G$. As in earlier sections, we choose a $\Zp[[\bfF]]$-generator $\bfm$ of $M$, and an $\OO_{F_{\mfp}}$-generator $\bfl$ of $L$.

Let $A'$ denote $\OO_{F_{\mfp}}$, and let $\mm$ denote its maximal ideal. Let $e$ be the ramification index of $F_{\mfp}$. (Since it is totally ramified, $e=[F_{\mfp}:\Qp]$.) Recall that $M_{A'}$ is the direct (i.e., injective) limit of

$$ \{ \mm^i \otimes M^{(j)} \}_{I_0}$$
where $I_0$ is the set of $(i,j) \in \Z\times \Z$ so that $j\geq 0$, and

$$ \left\{ 
\begin{array}{ll}
i\geq 0	&	\text{if } j=0,\\
i\geq p^{j-1}-je	&	\text{if }j\geq 1.
\end{array} \right.
$$
with maps $\varphi_{i,j}, f_{i,j}$, and $v_{i,j}$ between $\mm^i \otimes M^{(j)}$'s. Note that there is $s$ so that $p^s-(s+1)e \leq p^{j-1}-je$ for every $j\geq 1$.

\begin{proposition}		\Label{CA}
Let $E=p^s-(s+1)e$. There is a map 

\[ \iota: M_{A'} \to \mm^E \otimes M \]
which is well-defined, and its cokernel is finite. If $M_{A'}$ is torsion-free, $\iota$ is injective.
\end{proposition}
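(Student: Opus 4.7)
The plan is to construct $\iota$ componentwise on the direct system defining $M_{A'}$ and then invoke the universal property. For $(i,j)\in I_0$, I would set
\[ \iota_{i,j}: \mm^i\otimes M^{(j)} \to \mm^E\otimes M,\qquad \lambda\otimes m \mapsto \lambda\otimes \bfF^j m. \]
This lands in $\mm^i\otimes M\subseteq \mm^E\otimes M$ because $s$ was chosen so that $i\geq p^{j-1}-je\geq E$ for every $(i,j)\in I_0$ (interpreting $\mm^E$ as a fractional ideal of $A'=\OO_{F_{\mfp}}$, since $E\leq 1-e\leq 0$). Compatibility with $\varphi_{i,j}$ (inclusion $\mm^i\hookrightarrow\mm^{i-1}$) is immediate, and compatibility with $f_{i,j}$ reduces to $\bfF^{j-1}\bfF m=\bfF^j m$. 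The only nontrivial check is for $v_{i,j}$, where the identity $\bfF\bfV=p$ gives
\[ \iota_{i-e,j+1}\bigl(v_{i,j}(\lambda\otimes m)\bigr) = p^{-1}\lambda\otimes\bfF^{j+1}\bfV m = p^{-1}\lambda\otimes\bfF^j(pm) = \lambda\otimes\bfF^j m = \iota_{i,j}(\lambda\otimes m). \]
The universal property then produces the required $\iota: M_{A'}\to \mm^E\otimes M$.

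For the finiteness of $\cok\iota$, I would observe that the $(i,j)=(0,0)$ component of the direct system contributes all of $A'\otimes M=M$ to the image of $\iota$, sitting in $\mm^E\otimes M$ with quotient $(\mm^E/A')\otimes_{A'}M\cong (A'/\mm^{-E})\otimes_{A'}M$. Since $A'/\mm^{-E}$ is a finite $A'$-module, so is $\cok\iota$.

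For injectivity under the torsion-free hypothesis, I would invert $p$ and tensor everything with $K'=\mathrm{frac}(A')$. The supersingular reduction assumption forces $\bfF$ (and hence $\bfV$) to act as an automorphism of $M\otimes_{\Zp}\Qp$, since $\det\bfF=p$ is a unit in $\Qp$; consequently every $f_{i,j}$ and $v_{i,j}$ becomes an isomorphism after $\otimes_{A'}K'$, so $M_{A'}\otimes_{A'}K'\to(\mm^E\otimes M)\otimes_{A'}K'\cong M\otimes_{\Zp}K'$ is an isomorphism. Since $K'$ is flat over $A'$, $\ker\iota\otimes_{A'}K'=0$, so $\ker\iota$ is exactly the $A'$-torsion submodule of $M_{A'}$, which vanishes by assumption.

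The main obstacle is bookkeeping rather than ideas: one has to track simultaneously the Frobenius twist on $M^{(j)}$, the denominator $p^{-1}$ introduced by $v_{i,j}$, and the changing ambient ideal $\mm^i$ as one walks around the direct system. Once these are reconciled through $\bfF\bfV=p$, the remaining assertions on the cokernel and the kernel are formal consequences of, respectively, the $(0,0)$ entry hitting $M$ and the flat base change $A'\to K'$ killing all the structure maps' failures to be isomorphisms.
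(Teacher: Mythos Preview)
Your proposal is correct and follows the same construction as the paper: define $\iota$ on each $\mm^i\otimes M^{(j)}$ by applying $\bfF^j$ and then including $\mm^i\hookrightarrow\mm^E$, then pass to the colimit. The paper's proof is extremely terse (it writes the map as the composite $\varphi_{E+1,0}\circ\cdots\circ\varphi_{i,0}\circ f_{i,1}\circ\cdots\circ f_{i,j}$ and says ``the rest is clear''), whereas you have actually carried out the compatibility check with $v_{i,j}$ via $\bfF\bfV=p$, the cokernel bound via the $(0,0)$ term, and the injectivity via flat base change to $K'$---all of which are the natural details behind that ``clear''.
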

\begin{proof}
For each $\mm^i \otimes M^{(j)}$, we have a map $\mm^i \otimes M^{(j)}\to \mm^i \otimes M$ given by $f_{i,1}\circ f_{i,2}\circ \cdots \circ f_{i,j}$. Since $i \geq p^s-(s+1)e$, there is a map $\mm^i \otimes M\to \mm^{E} \otimes M$ given by $\varphi_{E+1,0} \circ \varphi_{E+2,0} \circ \cdots \circ \varphi_{i,0}$. The rest is clear.
\end{proof}

Then we can write

\[ \iota( \bfl)=\alpha_1 \bfm+\alpha_2 \bfF \bfm \]
for some $\alpha_1, \alpha_2 \in \mm^E$. We assume

\begin{assumption} 		\Label{Assumption K}
\[ p|\frac{\alpha_2}{\alpha_1}. \]
\end{assumption}
Doubtlessly, some formal groups associated to elliptic curves satisfy this condition, and many others do not. In fact, $\frac{\alpha_2}{\alpha_1}$ can have a negative $p$-adic valuation, although it is bounded below, and the bound depends on $e$.

Also we assume

\begin{assumption}	\Label{Assumption Torsions}
The group of torsions of $E(F_{\infty, \mfp})$ is finite.
\end{assumption}

This is a reasonable assumption. In fact, we can often show that $E[p]$ is irreducible as a $G_{F_{\mfp}}$-module.
\end{subsection}

\bigskip

\begin{subsection}{Finite bounds for ranks}		\Label{AlphaGo}
\begin{notation}		\Label{BetaGo}
\begin{enumerate}[(a)]
\item Where $e=[\Qp(\pi_N):\Qp]$, let $\{ \pi_{N,1},\cdots, \pi_{N, e}\}=\{ \pi_N^{\sigma} \}_{\sigma \in\Gal(\Qp(\pi_N)/\Qp)}$. 

\item Then, for every $n> N$ and for each $i=1,\cdots,e$, choose $\pi_{n,i}$ so that $\varphi(\pi_{n,i})=\pi_{n-1,i}$. (Then, $\pi_{n,i}$ is a uniformizer of $\Qp(\pi_n)$.)

\end{enumerate}
\end{notation}

Similar to Section~\ref{Some special} but slightly differently, we define the following.

\begin{definition}		\Label{MI}
\begin{enumerate}
\item
\[ J(X)=\bar H(X)-1=b_1X+b_2X^2=-\ds \frac {a_p}pX+\frac 1p X^2, \]

\item
\[ \epsilon=\ds \frac{\alpha_{p-1}}{p-a_p+1}, \]

\item
\[ l(X)=[1-J(\varphi)+J(\varphi)^2-\cdots ]\circ X \]
where $\varphi\circ X^n=\varphi(X)^n$,

\item Define $\tilde \bfx \in \Hom_{\Zp}(M, \mathcal P)$ given by

\[ \tilde \bfx(\bfm)=\epsilon+l(X)\]
\[ \tilde \bfx(\bfF \bfm)=\varphi \circ l(X).\]
\end{enumerate}
\end{definition}

\bigskip

We define the following functor.

\begin{definition}
We let $M'=\mm^E \otimes M$, and let $L'$ denote the maximal $A'$-submodule of $M'$ which contains $\iota(L)\subset M'$, and $L'/\iota(L)$ has a finite index.

\begin{enumerate}
\item	For an $A'$-algebra $g$, we define $G'(L',M)(g)$ as the set of $(u_{L'}, u_M)$ where

\[ u_{L'} \in \Hom_{A'}(L', \Q \otimes g),\]
\[ u_M \in \Hom_{\mathbf D} (M, CW(g/\mm g))\]
which naturally induces $u_M': M'(=\mm^E \otimes M) \to \mm^E \otimes CW(g/\mm g)$, so that $u_{L'}$ and $u_M'$ are identical under

\begin{eqnarray*}
\Hom_{A'\otimes \mathbf D}(M', \mm^E \otimes CW	(g/\mm g))&	\stackrel{\omega_g'}\longrightarrow	& \Hom_{A'}(L', (\Q \otimes g)/\mm^E \cdot P'(g))	\\
	&	& \qquad \uparrow		\\
	&	& \Hom_{A'} (L', \Q \otimes g).
\end{eqnarray*}

\item	Similarly, but slightly differently, define $G'(L', M)(A'[[X]])$ as the set of $(u_{L'}, u_M)$ where (in the following, $K'$ denotes $Frac(A')$)

\[ u_{L'} \in \Hom_{A'}(L', K'[[X]]),\]
\[ u_M \in \Hom_{\mathbf D} (M, CW(\mathbb F_p[[X]]))\]
which naturally induces $u_M': M' \to \mm^E \otimes CW(\mathbb F_p[[X]])$, so that $u_{L'}$ and $u_M'$ are identical under

\begin{eqnarray*}
\Hom_{A'\otimes \mathbf D}(M', \mm^E \otimes CW	(\mathbb F_p[[X]]))&	\longrightarrow	& \Hom_{A'}(L', K'[[X]] /\mm^E \cdot P'(A'[[X]]))	\\
	&	& \qquad \uparrow		\\
	&	& \Hom_{A'} (L', K'[[X]]).
\end{eqnarray*}

\item And, choose $M \in \Z(\geq 0)$ such that $p^M \cdot \mm^E \in A'$.
\end{enumerate}
\end{definition}

\bigskip

\begin{definition} 

\begin{enumerate}
\item	Recall $\tilde\bfx$ from Definition~\ref{MI}. Modulo $p\Zp[[X]]$, it induces $\bfx \in \Hom_{\mathbf D}(M, \overline{\mathcal P})$ satisfying

\[  \bfx(\bfm)=l(X) \pmod{p\Zp[[X]]}.\]
Then, define
\begin{enumerate}

\item We can choose an $A'$-generator $\bfl'$ of $L'$, and write it as

\[ \bfl'=\beta_1 \bfm+\beta_2 \bfF \bfm \]
where $\ds \frac{\beta_2}{\beta_1}=\frac{\alpha_2}{\alpha_1}$. Then, define $\bfy \in \Hom_{A'}(L', K'[[X]])$ by

\[ \bfy(\bfl)= \beta_1 \tilde \bfx(\bfm)+\beta_2 \tilde \bfx(\bfF \bfm)= \beta_1 \left( \epsilon + l(X) \right)+\beta_2 l(\varphi(X)) \]
and extend $A'$-linearly.

\item Then, we set
\[ P'=(\bfy, \bfx) \in G'(L', M)(A'[[X]]). \]
\end{enumerate}

\item	Then, for every $n \geq N$ and $i=1,2,\cdots, e$, we obtain points $P'(\pi_{n,i}) \in G'(L', M)(\Zp[\pi_n])$ by substituting $X=\pi_{n,i}$.

\end{enumerate}
\end{definition}

We make the following assumption analogous to \cite{Kobayashi}~Proposition~8.12~(ii).

\begin{assumption}		\Label{Assumption L}
$\{ P'(\pi_{n,1}), \cdots, P'(\pi_{n,e}) , P'(\pi_{n-1,1}), \cdots, P'(\pi_{n-1,e}) \}$ generates $G'(L', M)(\Zp[\pi_n])$ over $\Zp[\Gal(\Qp(\pi_n)/\Qp(\pi_N))]$ modulo torsions for every $n > N$.
\end{assumption}
We can apply the proof of \cite{Kobayashi} to this assumption certainly in some cases. We hope we can in most cases.

\bigskip

\begin{definition}We define a map $\xi:G'(L',M)\to G(L,M)$ as follows:
\begin{enumerate}[(a)]
\item First, recall that $G(L, M)(g)$ is the set of $(u_L, u_M)$ where $u_L:L\to \Qp\otimes g$ and $u_{M_{A'}}: M_{A'} \to CW_{k}(g/\mm g)_{A'}$ are identical through the diagram

\begin{eqnarray*}
\Hom_{A'\otimes \mathbf D}(M_{A'}, CW_k (g/\mm g)_{A'} )	&	\longrightarrow	& \Hom_{A'}(L, (\Qp\otimes g) /P'(g))	\\
	&	& \qquad \uparrow		\\
	&	& \Hom_{A'} (L, \Qp\otimes g).
\end{eqnarray*}
We also recall that $G'(L',M)(g)$ is the set of $(u_{L'}, u_M)$ where
$u_{L'} \in \Hom_{A'}(L', F)$, and $u_M \in \Hom_{\mathbf D} (M, CW(g/\mm g))$ 
which naturally induces $u_M': M'(=\mm^E \otimes M) \to \mm^E \otimes CW(g/\mm g)$ satisfy that $u_{L'}=u_M'$ through the diagram

\begin{eqnarray*}
\Hom_{A'\otimes \mathbf D}(M', \mm^E \otimes CW	(g/\mm g))	&		\longrightarrow	& \Hom_{A'}(L',  (\Qp\otimes g)/\mm^E P'(g))	\\
	&	& \qquad \uparrow		\\
	&	& \Hom_{A'} (L',  \Qp\otimes g).
\end{eqnarray*}

\item We recall that $\iota: M_{A'} \to M'(=\mm^E \otimes M)$ (which is identity on $M$) also induces $\iota:L\to L'(\supset \iota(L))$. Then, $p^M\cdot\iota^*$ induces 

\begin{eqnarray*}	
p^M\cdot\iota^*	&:& \Hom_{A'\otimes \mathbf D}(M', \mm^E \otimes CW	(g/\mm g)) \to \Hom_{A'\otimes \mathbf D}(M_{A'},  CW_{k} (g/\mm g)_{A'} ),		\\
p^M\cdot\iota^*	&:&		 \Hom_{A'} (L',  \Qp\otimes g) \to 	\Hom_{A'} (L,  \Qp\otimes g),	\\
p^M\cdot\iota^*	&:&		 \Hom_{A'} (L',  (\Qp\otimes g)/ \mm^E P'(g)) \to 	\Hom_{A'} (L,  (\Qp\otimes g)/P'(g)),
\end{eqnarray*}
because $p^M\cdot \mm^E \subset A'$. 
\end{enumerate}
Thus, $p^M\cdot \iota^*$ induces a map $\xi: G'(L', M)\to G(L, M)$.

\end{definition}

We also define:

\begin{definition}
Recall the isogeny $j_G: G(L, M) \to G$, and an embedding $i:G \to E$. We define

\[ P(\pi_{n,i})=i \circ j_G \circ \xi (P'(\pi_{n,i})) \]
for every $n \geq N$ and $i=1,2,\cdots, e$.
\end{definition}

\bigskip

\begin{proposition} 			\Label{Mark IV}
For now, let $\Tr_{n/m}$ denote $\Tr_{\Qp(\pi_n)/\Qp(\pi_m)}$. For $n>N$, we have

\[ \Tr_{n/n-1} \begin{bmatrix} P(\pi_{n, 1}) \\ \vdots \\ P(\pi_{n, e})	\end{bmatrix} = pA_{n-1} \begin{bmatrix} P(\pi_{n-1, 1}) \\ \vdots \\ P(\pi_{n-1, e})	\end{bmatrix}- A'_{n-1} \begin{bmatrix} P(\pi_{n-2, 1}) \\ \vdots \\ P(\pi_{n-2, e})	\end{bmatrix} \]
where $A_{n-1}$ is an $e\times e$ matrix with entries in $\Zp[\Gal(\Qp(\pi_{n-1})/\Qp(\pi_N))]$, and $A'_{n-1}$ is an $e\times e$ matrix also with entries in $\Zp[\Gal(\Qp(\pi_{n-1})/\Qp(\pi_N))]$ so that 

\[ A'_{n-1}\equiv I_e \pmod p. \]
\end{proposition}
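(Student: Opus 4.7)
\emph{Plan.} The strategy is to derive the refined two-step relation by a direct trace computation, starting from the $d=2$ specialization of Proposition~\ref{Despicable-Laundry-Machine} and then reorganizing the result using supersingularity ($p\mid a_p$) and Assumption~\ref{Assumption K} ($p\mid\beta_2/\beta_1$). These two divisibilities together will force every term in the resulting identity to be divisible by $p$ except the one coming from $-\beta_1(\epsilon+l(\pi_{n-2,i}))$, which is why the coefficient in front of $P(\pi_{n-1,i})$ can be written as $pA_{n-1}$ whereas the coefficient in front of $P(\pi_{n-2,i})$ is only $A'_{n-1}\equiv I_e\pmod p$. Since the natural map $G'(L',M)\to E$ built from $\xi$, $j_G$, and the embedding of the formal group is $\Zp[\Gal]$-linear and commutes with traces, I would establish the identity first at the level of the series $\bfy(\bfl')|_{X=\pi_{n,i}}$ and only at the very end push it down to $E(F_{n,\mfp})$.

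\emph{Main computation.} Specializing Proposition~\ref{Despicable-Laundry-Machine} to $d=2$ with $b_1=-a_p/p$, $b_2=1/p$ yields, for each tower index $i$, the one-step three-term recurrence
$$\Tr_{n/n-1}l(\pi_{n,i})=-\alpha_{p-1}+a_p\,l(\pi_{n-1,i})-l(\pi_{n-2,i}),$$
the structural point being that $-pb_2=-1$ is a unit, so the trace already drops only one level rather than $d$. The identity $(p-a_p+1)\epsilon=\alpha_{p-1}$ built into Definition~\ref{MI} then upgrades this to the homogeneous form $\Tr_{n/n-1}(\epsilon+l(\pi_{n,i}))=a_p(\epsilon+l(\pi_{n-1,i}))-(\epsilon+l(\pi_{n-2,i}))$. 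Combined with $\Tr_{n/n-1}\tilde\bfx(\bfF\bfm)|_{X=\pi_{n,i}}=\Tr_{n/n-1}l(\pi_{n-1,i})=p\,l(\pi_{n-1,i})$ (since $l(\pi_{n-1,i})$ already lies in $F_{n-1,\mfp}$), expanding $\bfy(\bfl')=\beta_1(\epsilon+l(X))+\beta_2\,l(\varphi(X))$ produces
$$\Tr_{n/n-1}\bfy(\bfl')|_{X=\pi_{n,i}}=\beta_1a_p(\epsilon+l(\pi_{n-1,i}))+p\beta_2\,l(\pi_{n-1,i})-\beta_1(\epsilon+l(\pi_{n-2,i})).$$

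\emph{Packaging and main obstacle.} Writing $a_p=pa_p'$ and $\beta_2/\beta_1=pc$ makes every term above divisible by $p$ except the last, and since $\beta_1(\epsilon+l(\pi_{n-2,i}))=\bfy(\bfl')|_{X=\pi_{n-2,i}}-pc\beta_1\,l(\pi_{n-3,i})$, the only contribution to the $(n-2)$-coefficient that cannot be pulled into a factor of $p$ is the scalar $-1$. Varying $i$ over $1,\ldots,e$ and re-expressing the residual $l$-terms at $\pi_{n-1,j}$, $\pi_{n-2,j}$, $\pi_{n-3,j}$ as $\Zp[\Gal(\Qp(\pi_{n-1})/\Qp(\pi_N))]$-linear combinations of the $P'(\pi_{n-1,j})$'s and $P'(\pi_{n-2,j})$'s (available by Assumption~\ref{Assumption L}), I would package the identity into the desired matrix form, obtaining $A'_{n-1}=I_e+pN$ with $N$ integral and $A_{n-1}$ likewise integral. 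The main obstacle I anticipate is precisely this last bookkeeping step: one must verify that the rewriting of the leftover $l$-terms via Assumption~\ref{Assumption L} actually yields matrices whose entries are integral in $\Zp[\Gal(\Qp(\pi_{n-1})/\Qp(\pi_N))]$, rather than landing in a larger ring carrying denominators from $\alpha_1,\alpha_2$. The factor $p^M$ absorbed into the map $\xi$ (chosen so that $p^M\mm^E\subset A'$), together with Assumption~\ref{Assumption Torsions} on finiteness of torsion, is the mechanism I expect to clear any residual denominators modulo torsion, which is all that the statement requires.
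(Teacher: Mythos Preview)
Your trace identity for $\epsilon+l(\pi_{n,i})$ is correct and coincides with the paper's equation~(\ref{Falcon}). The gap is in the packaging step. You write the residual (after pulling out $a_p\,\bfy(\bfl')|_{\pi_{n-1,i}}$ and $-\bfy(\bfl')|_{\pi_{n-2,i}}$) as a sum of terms like $p\beta_2 l(\pi_{n-1,i})$, $-a_p\beta_2 l(\pi_{n-2,i})$, $\beta_2 l(\pi_{n-3,i})$, and then say that Assumption~\ref{Assumption L} lets you express these as $\Zp[\Gal]$-combinations of $P'(\pi_{n-1,j})$ and $P'(\pi_{n-2,j})$. But Assumption~\ref{Assumption L} is a statement about \emph{points of $G'(L',M)(\Zp[\pi_{n-1}])$}, not about arbitrary elements of $\Qp(\pi_{n-1})$ such as $\beta_2 l(\pi_{n-3,i})$. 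To invoke it you must first produce an integral point whose $\bfy(\bfl')$-value equals your residual; for the individual $l$-terms this is not clear at all, and you do not address it.

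The paper closes exactly this gap by replacing your naive identity $\Tr_{n/n-1}l(\varphi(\pi_{n,i}))=p\,l(\pi_{n-1,i})$ with the reorganized one
\[
\Tr_{n/n-1} l(\varphi(\pi_{n,i})) = p\pi_{n-1,i} + a_p\, l(\varphi(\pi_{n-1,i})) - l(\varphi(\pi_{n-2,i}))
\]
(equation~(\ref{Eagle})), which aligns the $\beta_2$-part of $\bfy(\bfl')$ with the same three-term pattern as the $\beta_1$-part. After combining (\ref{Falcon}) and (\ref{Eagle}) the residual collapses to the single element $p\beta_2\pi_{n-1,i}$. Because $p\mid\beta_2/\beta_1$ (Assumption~\ref{Assumption K}), the paper can solve $\bfy(\bfl')(d_{n-1,i})=\beta_2\pi_{n-1,i}$ for an honest $d_{n-1,i}\in\mm_{\Zp[\pi_{n-1}]}$, yielding a point $D_{n-1,i}=P'(d_{n-1,i})\in G'(L',M)(\Zp[\pi_{n-1}])$; only now is Assumption~\ref{Assumption L} applied, to $D_{n-1,i}$, producing the integral matrices $B_{n-1},B_{n-1}'$ and hence $A_{n-1}$, $A'_{n-1}\equiv I_e\pmod p$. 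In short, the missing idea in your sketch is the identity (\ref{Eagle}) and the explicit construction of the auxiliary point $D_{n-1,i}$; without them the appeal to Assumption~\ref{Assumption L} is not justified.
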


\begin{proof} As in the proof of Proposition~\ref{Despicable-Laundry-Machine},
\begin{eqnarray*} \Tr_{n/n-1} l(\pi_{n,i}) = -\alpha_{p-1}-p\cdot \left[b_1 l(\pi_{n-1, i})+b_2 l(\pi_{n-2, i}) \right]
\end{eqnarray*}
thus

\begin{eqnarray} 		\Label{Falcon}
\Tr_{n/n-1} (\epsilon+l(\pi_{n, i}))-a_p(\epsilon+l(\pi_{n-1, i}))+(\epsilon+l(\pi_{n-2, i}))=0.
\end{eqnarray}

On the other hand, again as in the proof of Proposition~\ref{Despicable-Laundry-Machine},

\[ l(\varphi(\pi_{n, i}))=\pi_{n-1, i}-\left( \ds \frac {-a_p}p l(\pi_{n-2, i})+\frac 1p l(\pi_{n-3, i}) \right) \]
thus

\begin{multline} \Tr_{n/n-1} l(\varphi(\pi_{n, i})) = p \pi_{n-1, i}+a_pl(\pi_{n-2, i})-l(\pi_{n-3, i}) 	\\
= 	\Label{Eagle}	p\pi_{n-1, i} + a_p l(\varphi(\pi_{n-1, i}))-l(\varphi(\pi_{n-2, i})).
\end{multline}

Since $\ds \frac{\beta_2}{\beta_1} \pi_{n-1, i}$ is divisible by $p$, there is $d_{n-1,i} \in \mm_{\Zp[\pi_{n-1}]}$ so that

\[ (\epsilon+l(d_{n-1,i}))+\ds \frac{\beta_2}{\beta_1} l(\varphi(d_{n-1,i}))=\frac{\beta_2}{\beta_1} \pi_{n-1, i}. \]

In other words, $\bfy(\bfl)(d_{n-1,i})=\beta_2 \pi_{n-1, i}$. 

Let $D_{n-1, i}=P'(d_{n-1,i}) \in G'(L', M)(\Zp[\pi_{n-1}])$. 
By Assumption~\ref{Assumption L}, 

\begin{multline}	\Label{ABCD}
\begin{bmatrix} D_{n-1, 1}\\ \vdots \\ D_{n-1, e} \end{bmatrix}=
 \begin{bmatrix} a_{n-1, 11} &\cdots & a_{n-1, 1e} \\ \vdots & \ddots & \vdots \\ a_{n-1, e1} &\cdots & a_{n-1, ee} \end{bmatrix} \cdot \begin{bmatrix} P'(\pi_{n-1, 1}) \\ \vdots \\ P'(\pi_{n-1, e}) \end{bmatrix} \\
+ \begin{bmatrix} a_{n-1, 11}' &\cdots & a_{n-1, 1e}' \\ \vdots & \ddots & \vdots \\ a_{n-1, e1}' &\cdots & a_{n-1, ee}' \end{bmatrix} \cdot \begin{bmatrix} P'(\pi_{n-2, 1}) \\ \vdots \\ P'(\pi_{n-2, e}) \end{bmatrix}
\end{multline}
modulo torsions for some $a_{n-1, ij}, a_{n-1, ij}' \in \Zp[\Gal(\Qp(\pi_{n-1})/\Qp(\pi_N))]$.

For $Q=(y_Q, x_Q) \in G'(L', M)(g)$ with $y_Q \in \Hom_{A'}(L', \Qp\otimes g)$, we let $\bfl(Q)$ denote $y_Q(\bfl)$. For example, $\bfl(P'(\pi_{n,i}))=\beta_1(\epsilon+l(\pi_{n,i}))+\beta_2l(\varphi(\pi_{n,i}))$.

Then, by (\ref{Falcon}), (\ref{Eagle}), and (\ref{ABCD}),

\begin{eqnarray*} \Tr_{n/n-1} \begin{bmatrix} \bfl(P'(\pi_{n, 1})) \\ \vdots \\ \bfl(P'(\pi_{n, e}))	\end{bmatrix} &=& a_p \begin{bmatrix} \bfl(P'(\pi_{n-1, 1})) \\ \vdots \\ \bfl(P'(\pi_{n-1, e}))	\end{bmatrix}- \begin{bmatrix} \bfl(P'(\pi_{n-2, 1})) \\ \vdots \\ \bfl(P'(\pi_{n-2, e}))	\end{bmatrix}		\\
&& +pB_{n-1}\cdot \begin{bmatrix} \bfl(P'(\pi_{n-1, 1})) \\ \vdots \\ \bfl(P'(\pi_{n-1, e})) \end{bmatrix}+ p B_{n-1}'  \cdot \begin{bmatrix} \bfl(P'(\pi_{n-2, 1})) \\ \vdots \\ \bfl(P'(\pi_{n-2, e})) \end{bmatrix}
\end{eqnarray*}
where $B_{n-1}, B_{n-1}'$ are the matrices that appear in (\ref{ABCD}). Since $L'$ is one-dimensional, this implies an analogous identity for the $y$-part of $P'(\pi_{n,i})$'s, therefore an analogous identity for $P'(\pi_{n,i})$'s holds modulo torsions. 

By taking $ i\circ j_G \circ (p^M\cdot \iota^*)$, we obtain our claim because $p^M$ annihilates the torsions.
\end{proof}

This relation is finer than the one used in Section~\ref{Case 1}, and we will adopt Sprung's insight of $\sharp/\flat$-decomposition to produce the characteristics $\bfL^{\sharp}, \bfL^{\flat}$ which are integral power series, which make a big difference between this section and the previous one.

Recall that $e=[F_{\mfp}:\Qp]=[F:\Q]$. As in Section~\ref{Case 1}, we assume

\[ \rank_{\Lambda} \Selr(E[p^{\infty}]/F_{\infty})^{\vee}=e. \]

It is not difficult to prove this assumption when $\Sel_p(E[p^{\infty}]/F_n)^{\chi_n}$ is finite for some $n$ and a character $\chi_n$.

Also as in Section~\ref{Case 1}, we let

\[ S_{tor}=\left( \Selr(E[p^{\infty}]/F_{\infty})^{\vee} \right)_{\Lambda-torsion}. \]
As Section~\ref{Case 1}, there is

\begin{eqnarray}			 0 \to  \Selr(A/F_{\infty})^{\vee}/S_{tor} \to \Lambda^e \to C \to 0
\end{eqnarray}
for a finite group $C$. 

It is often true that $\Selr(E[p^{\infty}] /F_n) \to \Selr(E[p^{\infty}] /F_{\infty})^{\Gamma^{p^n}}$ is an isomorphism, and even when it is not, its kernel and cokernel are bounded, so are easy to deal with. In this section, for convenience, assume it is an isomorphism for each $n$. The above short exact sequence induces

\[ \alpha_n': \Selr(E[p^{\infty}]/F_n)^{\vee} \to  \left( \Selr(A/F_{\infty})^{\vee}/S_{tor} \right)_{/\Lambda^{p^n}} \to \Lambda_n^e.\]

\begin{definition}
\begin{enumerate}[(a)]
\item

Recall $P(\pi_{N+n, i})$ is a point of $E(\Qp(\pi_{N+n}))=E(F_{n, \mfp})$.

Recall $\Lambda_n=\Zp[\Gamma_n]$. Let $R(\pi_{N+n, i})$ be the image of $P(\pi_{N+n, i})$ under the map

\[ E(F_{n,\mfp}) \to \Selr(E[p^{\infty}]/F_n)^{\vee} \stackrel{\alpha_n'}\to \Lambda_n^e. \]

\item
Choose a lifting $\tilde R(\pi_{N+n, i}) \in \Lambda^e$ of $ R(\pi_{N+n, i})$ for each $n$. Our result will not depend on the choice of $\tilde R(\pi_{N+n, i})$. Let

\[ \mathbf R_{N+n}=\begin{bmatrix} \tilde R(\pi_{N+n, 1})^t \\ \vdots \\ \tilde R(\pi_{N+n, e})^t	\end{bmatrix} \in M_{e}(\Lambda) .\]

\item Let $\Phi_n \in \Lambda$ be the minimal polynomial of $\zeta_{p^n}-1$, i.e., $\Phi_n=\ds \frac{(1+X)^{p^n}-1}{(1+X)^{p^{n-1}}-1}$ if $n \geq 1$, and $\Phi_0=X$. And, let $\omega_n=(1+X)^{p^n}-1$. We consider $\Phi_n$ and $\omega_n$ as elements of $\Lambda$ under the identification $\Lambda=\Zp[[X]]$.
\end{enumerate}
\end{definition}

We note $\Phi_n=\sum _{\sigma \in \operatorname{Ker}(\Gamma_n \to \Gamma_{n-1})} \sigma \pmod{\omega_n}$.

\begin{proposition}		\Label{Montana}

\[ \begin{bmatrix} \mathbf R_{N+n+1}	\\  \mathbf{ R}_{N+n}		\end{bmatrix} = 
\begin{bmatrix} pA_{N+n} &	-A'_{N+n} \Phi_n  \\
I_e	&	0	\end{bmatrix} \cdot
\begin{bmatrix}	\mathbf R_{N+n}		\\
\mathbf{ R}_{N+n-1}
\end{bmatrix}
\pmod{\omega_n}.
\]
\end{proposition}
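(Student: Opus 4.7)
The plan is to deduce Proposition~\ref{Montana} from Proposition~\ref{Mark IV} by pushing its trace identity through the composition $E(F_{n,\mfp}) \xrightarrow{\beta_n} \Selr(\bfA/F_n)^\vee \xrightarrow{\alpha_n'} \Lambda_n^e$. The second row of the claimed matrix identity is tautological, so all the work goes into the top row
\[
\mathbf R_{N+n+1} \equiv p A_{N+n}\,\mathbf R_{N+n} - A'_{N+n}\,\Phi_n\,\mathbf R_{N+n-1} \pmod{\omega_n}.
\]
Applying Proposition~\ref{Mark IV} with the running index shifted to $N+n+1$ gives, modulo torsion, a relation in $E(F_{n,\mfp})$ expressing $\Tr_{F_{n+1,\mfp}/F_{n,\mfp}} P(\pi_{N+n+1,i})$ as a $\Zp[\Gal(F_{n,\mfp}/F_{\mfp})]$-linear combination of the $P(\pi_{N+n,j})$ with coefficients $p\,a_{N+n,ij}$ and of the $P(\pi_{N+n-1,j})$ with coefficients $-a'_{N+n,ij}$.

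Next I would push this relation to $\Lambda_n^e$. On the left, the functoriality of local Tate duality identifies corestriction on the elliptic curve side with the natural projection $\Lambda_{n+1}^e\to\Lambda_n^e$, so the image of $\Tr_{F_{n+1,\mfp}/F_{n,\mfp}} P(\pi_{N+n+1,i})$ is precisely the reduction of $\mathbf R_{N+n+1}$ modulo $\omega_n$ (the identification $\Selr(\bfA/F_m)^{\vee}\cong \Selr(\bfA/F_{\infty})^{\vee}_{/\Gamma^{p^m}}$ assumed earlier in the paper is what makes this work cleanly). The points $P(\pi_{N+n,j})$ already lie at level $n$, so their images produce the first summand $pA_{N+n}\mathbf R_{N+n}\pmod{\omega_n}$ directly. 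The genuine content is the image of $P(\pi_{N+n-1,j})$, which lives at level $n-1$ but is here being mapped via the level-$n$ composition.

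The key claim I would then establish is that for any $P\in E(F_{n-1,\mfp})\subset E(F_{n,\mfp})$,
\[
\alpha_n'\bigl(\beta_n(P)\bigr) \equiv \Phi_n\cdot \tilde R_{n-1}(P) \pmod{\omega_n},
\]
where $\tilde R_{n-1}(P)\in\Lambda^e$ is any lift of the level-$(n-1)$ image; this is well defined because $\Phi_n\omega_{n-1}=\omega_n$. I would prove the claim in two steps. First, the $\Gamma$-equivariance of the local Tate pairing, combined with the fact that $P$ is fixed by $\Gal(F_{n,\mfp}/F_{n-1,\mfp})=\Gamma^{p^{n-1}}/\Gamma^{p^n}$, forces $\alpha_n'(\beta_n(P))$ to lie in the $\Gamma^{p^{n-1}}/\Gamma^{p^n}$-invariants of $\Lambda_n^e$, which a short direct computation shows equal $\Phi_n\Lambda_n^e$; so write $\alpha_n'(\beta_n(P))=\Phi_n y$. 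Second, the standard compatibility of the Tate pairing with inclusion on the curve and restriction on cohomology yields $\alpha_n'(\beta_n(P))\equiv R_{n-1}(\Tr_{F_{n,\mfp}/F_{n-1,\mfp}}P)=p\,R_{n-1}(P)\pmod{\omega_{n-1}}$. Using the elementary identity $\Phi_n\equiv p\pmod{\omega_{n-1}}$ (visible by setting $Y=(1+X)^{p^{n-1}}$ in $\Phi_n=1+Y+\cdots+Y^{p-1}$), this becomes $py\equiv p\,R_{n-1}(P)\pmod{\omega_{n-1}}$. Since $\Lambda/\omega_{n-1}\cong\Zp[\Gamma_{n-1}]$ is $\Zp$-free, $p$ is a non-zero-divisor there, so $y\equiv \tilde R_{n-1}(P)\pmod{\omega_{n-1}}$. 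Multiplying through by $\Phi_n$ and invoking $\Phi_n\omega_{n-1}=\omega_n$ produces exactly the claim.

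The main obstacle is formulating the two functorial compatibilities cleanly enough to apply them: identifying corestriction on points with projection on $\Lambda$-coordinates, and identifying the $\Gamma^{p^{n-1}}/\Gamma^{p^n}$-invariants of $\Lambda_n^e$ with $\Phi_n\Lambda_n^e$. Once those are in place, combining the central claim (for the $\mathbf R_{N+n-1}$ term) with the direct level-$n$ calculation (for the $\mathbf R_{N+n}$ term) and with Proposition~\ref{Mark IV} reproduces the top row of the stated matrix identity, and the bottom row is the trivial equality $\mathbf R_{N+n}=\mathbf R_{N+n}$.
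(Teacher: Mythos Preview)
Your argument is correct and is exactly the expansion of the paper's one-line proof (``This follows immediately from Proposition~\ref{Mark IV}''): you push the trace relation of Proposition~\ref{Mark IV} through the $\Gamma_n$-equivariant composite $E(F_{n,\mfp})\to\Selr(\bfA/F_n)^{\vee}\to\Lambda_n^e$ and identify the three terms. The only nontrivial step is your ``key claim'' that a level-$(n-1)$ point acquires a factor $\Phi_n$ when mapped via the level-$n$ composite; your invariants-plus-projection argument is fine, though a slightly more direct route is to note that the inclusion $E(F_{n-1,\mfp})\hookrightarrow E(F_{n,\mfp})$ is Tate-dual to corestriction, which on $(\Selr^{\vee})_{/\Gamma^{p^{n-1}}}\to(\Selr^{\vee})_{/\Gamma^{p^n}}$ is multiplication by the norm element $\sum_{\sigma\in\Gamma^{p^{n-1}}/\Gamma^{p^n}}\sigma\equiv\Phi_n\pmod{\omega_n}$.
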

\begin{proof}
This follows immediately from Proposition~\ref{Mark IV}.
\end{proof}

\begin{definition}		\Label{Mateo}
We choose liftings $\tilde A_{N+n}, \tilde A'_{N+n} \in M_e(\Lambda)$ of $A_{N+n}, A'_{N+n}$ for every $n$. We set
\begin{eqnarray*} \begin{bmatrix}		\tilde \bfL^{\sharp}(E)	\\	\tilde \bfL^{\flat}(E)	\end{bmatrix} 
&\stackrel{def}=&	 
\varprojlim_n
\begin{bmatrix} p\tilde A_{N+1} &	-\tilde A'_{N+1} \Phi_1  \\
I_e	&	0	\end{bmatrix}^{-1} 
\cdot 
\begin{bmatrix} p \tilde A_{N+2} &	-\tilde A'_{N+2} \Phi_2  \\
I_e	&	0	\end{bmatrix}	^{-1}	
\cdot \\
&& \cdots\quad \cdot
		\begin{bmatrix} p \tilde A_{N+n} &	-\tilde A'_{N+n} \Phi_n  \\
I_e	&	0	\end{bmatrix}^{-1} \cdot
\begin{bmatrix} \mathbf R_{N+n+1}	\\  \mathbf{ R}_{N+n}		\end{bmatrix}.		\\
\bfLalg^{\sharp}(E)	&\stackrel{def}=&	\det(\tilde \bfL^{\sharp}(E)),	\\
\bfLalg^{\flat}(E)	&\stackrel{def}=&	\det(\tilde \bfL^{\flat}(E)).
\end{eqnarray*}
\end{definition}

\begin{proposition}	\Label{GagConcert}
(a) $\tilde \bfL^{\sharp}(E)$ and $\tilde \bfL^{\flat}(E)$ are well-defined (i.e., the projective limits exist), and (b) their entries are in $\Lambda$.
\end{proposition}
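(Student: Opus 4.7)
The plan is to consider the partial products
\[
Y_n := M_1^{-1} M_2^{-1} \cdots M_n^{-1} W_n \in \mathrm{Frac}(\Lambda)^{2e},
\]
where $M_k = \begin{bmatrix} p\tilde A_{N+k} & -\tilde A'_{N+k}\Phi_k \\ I_e & 0 \end{bmatrix}$ and $W_n = \begin{bmatrix} \mathbf R_{N+n+1} \\ \mathbf R_{N+n} \end{bmatrix}$, and to show (i) each $Y_n$ already lies in $\Lambda^{2e}$ in spite of the $\Phi_k^{-1}$ denominators present in $M_k^{-1}$, and (ii) the sequence $\{Y_n\}$ is Cauchy in the $\mathfrak{m}_\Lambda = (p, X)$-adic topology. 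Since $\Lambda$ is complete in this topology, both parts (a) and (b) then follow at once, with $\bfLalg^\sharp(E), \bfLalg^\flat(E) \in \Lambda$ obtained by taking determinants of the top and bottom $e \times e$ blocks of the limit.

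Because $\tilde A'_{N+k} \equiv I_e \pmod p$, it is invertible in $M_e(\Lambda)$, and a block computation yields
\[
M_k^{-1} = \begin{bmatrix} 0 & I_e \\ -(\tilde A'_{N+k})^{-1}\Phi_k^{-1} & p(\tilde A'_{N+k})^{-1}\Phi_k^{-1}\tilde A_{N+k} \end{bmatrix},
\]
so the only denominator is a factor of $\Phi_k^{-1}$, occurring in the lower block. For (i), I would proceed by descending induction on $j$ from $j=n$ down to $j=1$, establishing
\[
V^{(n)}_j := M_j^{-1} \cdots M_n^{-1} W_n \in \Lambda^{2e} \quad\text{and}\quad V^{(n)}_j \equiv W_{j-1} \pmod{\omega_{j-1}\Lambda^{2e}}.
\]
The base case $j = n$ uses Proposition~\ref{Montana}: the expression $\mathbf R_{N+n+1} - p\tilde A_{N+n}\mathbf R_{N+n} + \tilde A'_{N+n}\Phi_n\mathbf R_{N+n-1}$ lies in $\omega_n \Lambda^e = \Phi_n\omega_{n-1}\Lambda^e$, so dividing by $\Phi_n$ lands in $\omega_{n-1}\Lambda^e$; the $\Phi_n^{-1}$ in $M_n^{-1}$ thus absorbs exactly this factor. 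The inductive step is the same argument at level $j$, noting that the extra error $\omega_j\mu$ coming from $V^{(n)}_{j+1} - W_j$ is also $\Phi_j$-absorbable because $\omega_j = \Phi_j\omega_{j-1}$. Taking $j=1$ yields $Y_n \in \Lambda^{2e}$.

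For (ii), write $Y_{n+1} - Y_n = M_1^{-1} \cdots M_n^{-1}(M_{n+1}^{-1}W_{n+1} - W_n)$. A direct computation via Proposition~\ref{Montana} shows
\[
M_{n+1}^{-1}W_{n+1} - W_n = \omega_n \begin{bmatrix} 0 \\ -(\tilde A'_{N+n+1})^{-1}\zeta_{n+1} \end{bmatrix}
\]
for some $\zeta_{n+1} \in \Lambda^e$, with the first block vanishing exactly. Tracking the pair $(a, b)$ of $\mathfrak{m}_\Lambda$-adic valuations of the two blocks as we successively apply $M_n^{-1}, M_{n-1}^{-1}, \ldots, M_1^{-1}$, one verifies that since $\mathrm{val}_{\mathfrak{m}_\Lambda}(\Phi_k) = \mathrm{val}_{\mathfrak{m}_\Lambda}(p) = 1$ and the intermediate vectors retain a one-step lag between their two blocks, every two successive matrix applications reduce each valuation by only one unit. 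Starting from $(\infty, n+1)$, we reach vals $\geq \lfloor n/2 \rfloor + 1$ after all $n$ applications, whence $Y_{n+1} - Y_n \in \mathfrak{m}_\Lambda^{\lfloor n/2 \rfloor + 1}\Lambda^{2e}$, which gives Cauchy convergence.

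The hard part is the valuation bookkeeping in (ii). The vanishing of the first block of $\omega_n\eta_{n+1}$---ultimately a consequence of Proposition~\ref{Mark IV}, itself reflecting Assumption~\ref{Assumption K}---must be shown to propagate, in the sense that the lag between the first and second block valuations stays bounded by one throughout all $n$ matrix applications. Without this Sprung-style coupling between the two block components, the valuations would drop by two units per step and Cauchyness would fail. This is exactly the mechanism that produces integral characteristic series $\bfLalg^\sharp, \bfLalg^\flat$, in contrast to Perrin-Riou's non-integral $\bfL_\alpha$ from Section~\ref{Case 1}.
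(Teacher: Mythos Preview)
Your proposal is correct and follows essentially the same two-step architecture as the paper: first a descending induction using Proposition~\ref{Montana} to show that each partial product lies in $\Lambda^{2e}$ and is congruent to $W_{j-1}$ modulo $\omega_{j-1}$, then a convergence argument tracking how much ``size'' is lost as one propagates a difference vector back through the $M_k^{-1}$'s. The paper's part (i) is identical to yours.

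For part (ii) the paper uses a slightly different bookkeeping device: rather than the $\mathfrak m_\Lambda$-adic valuation, it introduces an ad hoc ``number of divisors'' count (counting multiplicities of $p$ but only presence/absence of each $\Phi_i$), establishes the same recursion $n_{i-1}\ge\min(n_{i+1}-1,n_i)$ you implicitly use, and only at the end converts this into smallness modulo $(p^{n/2-i},X^{p^{i-1}})$. Your direct use of the $\mathfrak m_\Lambda$-adic valuation is cleaner, since $\Lambda$ is regular local so the valuation is genuinely additive, making $\mathrm{val}(\Phi_j^{-1}w)=\mathrm{val}(w)-1$ automatic once $w\in\Lambda$. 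One point you should make explicit: the reason you may divide by $\Phi_j$ at each step and stay in $\Lambda$ is not the valuation bound itself but the fact, already secured in part (i), that the intermediate differences $M_j^{-1}\cdots M_n^{-1}(M_{n+1}^{-1}W_{n+1}-W_n)=V_j^{(n+1)}-V_j^{(n)}$ lie in $\Lambda^{2e}$. With that granted, your valuation recursion $(a,b)\mapsto(b,\min(a,b+1)-1)$ is exactly the paper's divisor-count recursion in different clothing, and both yield the same $\lfloor n/2\rfloor$ bound.
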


\begin{proof} 
First, we show the following:

Let $c_{n+1}=\mathbf R_{N+n+1}, d_{n+1}=\mathbf R_{N+n}$, and

\[ \begin{bmatrix} c_i \\ d_i \end{bmatrix} =\begin{bmatrix} p \tilde A_{N+i} &	-\tilde A'_{N+i} \Phi_i  \\
I_e	&	0	\end{bmatrix}^{-1} \cdots \begin{bmatrix} p \tilde A_{N+n} &	-\tilde A'_{N+n} \Phi_n  \\
I_e	&	0	\end{bmatrix}^{-1} \begin{bmatrix} \mathbf R_{N+n+1}  \\ \mathbf R_{N+n} \end{bmatrix} \]
for every $1 \leq i \leq n$. We will show that

\begin{enumerate}[(1)]
\item $c_i, d_i \in M_e(\Lambda)$,
\item $c_i \equiv \mathbf R_{N+i} \pmod{\omega_i}, d_i \equiv \mathbf R_{N+i-1} \pmod{\omega_{i-1}}$ for every $1\leq i\leq n+1$.
\end{enumerate}
We prove it inductively as follows:

\emph{Step 1.} By the definition of $c_{n+1}$ and $d_{n+1}$, the claim is true for $i=n+1$.

\emph{Step 2.} Suppose the claim is true for $c_{i+1}, d_{i+1}$. Then,

\begin{eqnarray*} \begin{bmatrix} c_i \\ d_i \end{bmatrix} &=& \begin{bmatrix} p \tilde A_{N+i} &	-\tilde A'_{N+i} \Phi_i  \\
I_e	&	0	\end{bmatrix}^{-1} \begin{bmatrix} c_{i+1} \\ d_{i+1} \end{bmatrix} 		\\
&=& \ds \frac1{\Phi_i} (\tilde A_{N+i}')^{-1} 
\begin{bmatrix} 0& \tilde A_{N+i}' \Phi_i \\ 
-I_e & p\tilde A_{N+i} \end{bmatrix} 
\begin{bmatrix} c_{i+1} \\ d_{i+1} 
\end{bmatrix} 		\\
&=& \begin{bmatrix} d_{i+1} \\
\ds \frac1{\Phi_i} (\tilde A_{N+i}')^{-1} ( -c_{i+1}+p \tilde A_{N+i} d_{i+1} )
\end{bmatrix}
\end{eqnarray*}
By the induction hypothesis and Proposition~\ref{Montana}, we have

\begin{eqnarray*}	-c_{i+1}+p \tilde A_{N+i} d_{i+1} &=& -\mathbf R_{N+i+1}+p\tilde A_{N+i} \mathbf R_{N+i} \pmod{\omega_i}		\\
&=& \tilde A_{N+i}' \Phi_i \mathbf R_{N+i-1} \pmod{\omega_i}.
\end{eqnarray*}
Thus,

\[ \ds \frac1{\Phi_i} (\tilde A_{N+i}')^{-1} \left[ -c_{i+1}+p \tilde A_{N+i} d_{i+1} \right] \equiv \mathbf R_{N+i-1} \pmod{\omega_{i-1}} .\]
Thus, $c_i=d_{i+1}\equiv \mathbf R_{N+i} \pmod{\omega_i}$, and $d_i\equiv \mathbf R_{N+i-1} \pmod{\omega_{i-1}}$, and $c_i, d_i \in \Lambda^e$. Inductively, $c_1,d_1 \in M_e( \Lambda)$.

Second, we show the following:
By the above, for any $m \geq n$,

\[ \begin{bmatrix} p\tilde A_{N+n+1}& -\tilde A_{N+n+1}' \Phi_{n+1} \\
I_e & 0
\end{bmatrix}^{-1} \cdots \begin{bmatrix} p\tilde A_{N+m}& -\tilde A_{N+m}' \Phi_m \\
I_e & 0
\end{bmatrix}^{-1}
\begin{bmatrix} \mathbf R_{N+m+1} \\ \mathbf R_{N+m}
\end{bmatrix}
= \begin{bmatrix} r_{N+n+1} \\ s_{N+n+1}
\end{bmatrix} \]
where $r_{N+n+1} \equiv \mathbf R_{N+n+1} \pmod {\omega_{n+1}}$, $s_{N+n+1} \equiv \mathbf R_{N+n} \pmod {\omega_{n}}$. Let

\[ \begin{bmatrix} e_{n+1} \\ e_n \end{bmatrix}= \begin{bmatrix} r_{N+n+1} \\ s_{N+n+1}
\end{bmatrix} - \begin{bmatrix} \mathbf R_{N+n+1} \\ \mathbf R_{N+n}
\end{bmatrix}, \]
then $e_{n+1} \equiv 0 \pmod {\omega_{n+1}}$, $e_n \equiv 0 \pmod{\omega_n}$.

Let 

\[ \begin{bmatrix} e_i \\ e_{i-1} \end{bmatrix} =\begin{bmatrix} p \tilde A_{N+i} &	-\tilde A'_{N+i} \Phi_i  \\
I_e	&	0	\end{bmatrix}^{-1} \cdots \begin{bmatrix} p \tilde A_{N+n} &	-\tilde A'_{N+n} \Phi_n  \\
I_e	&	0	\end{bmatrix}^{-1} \begin{bmatrix} e_{n+1}  \\ e_n \end{bmatrix} \]
for every $1 \leq i \leq n$.

For our immediate purpose, we devise the following way of counting the number of divisors of elements of $M_e(\Lambda)$. If $f=p$ or $f=\Phi_i$ for some $i$, and $f|a \in M_e(\Lambda)$, we say $f$ is a divisor of $a$. Any other irreducible polynomial that divides $a$ is ignored in our way of counting. To define the number of divisors of $a$, we count $p$ any number of times that $p$ divides $a$ (for example, if $p^k|a$, then $p$ is counted $k$ times towards the number of divisors), but we count each $\Phi_i$ that divides $a$ only once (for example, if $\Phi_i^k|a$, then $\Phi_i$ is counted only once towards the number of divisors). For example, if $p^3 (X^2+2)| a \in M_e(\Lambda)$, then $a$ has at least $3$ divisors ($p$ is counted $3$ times, and $X^2+2$ is not counted), and if $p\Phi_2^2 \Phi_3^2 |b$, then $b$ has at least $3$ divisors ($\Phi_2$ and $\Phi_3$ are each counted only once). 

If $a=\sum a_i$ for some $a_i \in M_e(\Lambda)$ with each $a_i$ having at least $k$ divisors, we say $a$ is a sum of elements, each of which has at least $k$ divisors.

Suppose $e_{i+1}$ is a sum of elements, each of which has at least $n_{i+1}$ divisors, and suppose $e_i$ is a sum of elements, each of which has at least $n_{i}$ divisors. And, suppose $\omega_{i+1}|e_{i+1}$ and $\omega_i | e_i$. Then,

\[ e_{i-1}=\ds \frac1{\Phi_i} \tilde A_{N+i}'^{-1} (-e_{i+1}+p\tilde A_{N+i} e_i) = \tilde A_{N+i}'^{-1}(- \ds \frac 1{\Phi_i} e_{i+1}+\frac p{\Phi_i} \tilde A_{N+i} e_i)\]
and $\frac 1{\Phi_i} e_{i+1}$ and $\frac p{\Phi_i} \tilde A_{N+i} e_i$ are respectively a sum of elements, each of which has at least $n_{i+1}-1$ divisors, and a sum of elements, each of which has at least $n_{i}$ divisors. Both are divisible by $\omega_{i-1}$. Thus, $e_{i-1}$ is a sum of elements, each of which has at least $\operatorname{min}(n_{i+1}-1, n_i)$ divisors, and is divisible by $\omega_{i-1}$.

Since $\omega_{n+1}|e_{n+1}$ and $\omega_n|e_n$, it is not difficult to see that $e_1$ and $e_0$ are sums of elements, each of which has at least $n/2$ divisors.

For $i\geq 1$, $\Phi_i\equiv 0 \pmod{(p, X^{p^{i-1}})}$, so when $0\leq \alpha_1 < \cdots < \alpha_{n'}$ for some $n'$,

\begin{eqnarray*}		p^j \Phi_{\alpha_1}\cdots \Phi_{\alpha_{n'}} &\equiv & p^j p^{n'-i} * \pmod{X^{p^{i-1}}}		\\
&=& p^{n'-i+j}	*		\pmod{X^{p^{i-1}}}
\end{eqnarray*}
($*$ indicates any element). Thus, it follows that 

\[ \begin{bmatrix} e_1	\\	e_0	\end{bmatrix}
 \equiv \begin{bmatrix} 0\\ 0 	\end{bmatrix}
 \pmod{p^{n/2-i}, X^{p^{i-1}}}.  \]
In other words,

\begin{eqnarray*}
\bfL_{n,m}&	\stackrel{def}=	& \begin{bmatrix} p\tilde A_{N+1}& -\tilde A_{N+1}' \Phi_{1} \\
I_e & 0
\end{bmatrix}^{-1} 
\cdots \begin{bmatrix} p\tilde A_{N+m}& -\tilde A_{N+m}' \Phi_m \\
I_e & 0
\end{bmatrix}^{-1}
\begin{bmatrix} \mathbf R_{N+m+1} \\ \mathbf R_{N+m}
\end{bmatrix}		\\
&&
-\begin{bmatrix} p\tilde A_{N+1}& -\tilde A_{N+1}' \Phi_{1} \\
I_e & 0
\end{bmatrix}^{-1} 
\cdots \begin{bmatrix} p\tilde A_{N+n}& -\tilde A_{N+n}' \Phi_n \\
I_e & 0
\end{bmatrix}^{-1}
\begin{bmatrix} \mathbf R_{N+n+1} \\ \mathbf R_{N+n}
\end{bmatrix}		\\
&=&
\begin{bmatrix} 0 \\ 0 \end{bmatrix}	
\pmod{p^{n/2-i}, X^{p^{i-1}}},
\end{eqnarray*}
so $\bfL_{n,m}$ converges to $0$ uniformly as $n,m \to \infty$.

Thus, we obtain our claim.
\end{proof}

\vspace{3mm}

In the proof of Proposition~\ref{GagConcert}, we see that there are $\mathbf R_{N+n}^{(m)}, \mathbf R_{N+n-1}^{(m)} \in M_e(\Lambda)$ so that $\mathbf R_{N+n}^{(m)} \equiv \mathbf R_{N+n} \pmod{\omega_n}, \mathbf R_{N+n-1}^{(m)}\equiv \mathbf R_{N+n-1} \pmod{\omega_{n-1}}$, and

\[ \begin{bmatrix} \mathbf R_{N+n}^{(m)} \\ \mathbf R_{N+n-1}^{(m)} \end{bmatrix} =\begin{bmatrix} p \tilde A_{N+n} &	-\tilde A'_{N+n} \Phi_n  \\
I_e	&	0	\end{bmatrix}^{-1} \cdots \begin{bmatrix} p \tilde A_{N+m} &	-\tilde A'_{N+m} \Phi_m  \\
I_e	&	0	\end{bmatrix}^{-1} \begin{bmatrix} \mathbf R_{N+m+1}  \\ \mathbf R_{N+m} \end{bmatrix}. \]
From Definition~\ref{Mateo},

\begin{multline*} \begin{bmatrix} p \tilde A_{N+n-1} &	-\tilde A'_{N+n-1} \Phi_{n-1}  \\
I_e	&	0	\end{bmatrix}
\cdot \cdots \cdot
\begin{bmatrix} p \tilde A_{N+2} &	- \tilde A'_{N+2} \Phi_2  \\
I_e	&	0	\end{bmatrix}
\cdot
\begin{bmatrix} p \tilde A_{N+1} &	-\tilde A'_{N+1} \Phi_1  \\
I_e	&	0	\end{bmatrix}
\cdot
\begin{bmatrix}	\tilde \bfL^{\sharp}(E)	\\	\tilde \bfL^{\flat}(E)	\end{bmatrix} \\
=  \varprojlim_m \begin{bmatrix} \mathbf R_{N+n}^{(m)}	\\  \mathbf{ R}_{N+n-1}^{(m)}		\end{bmatrix}			,
\end{multline*}
and for a primitive $p^n$-th root of unity $\zeta_{p^n}$, $\varprojlim \mathbf R_{N+n}^{(m)}|_{X=\zeta_{p^n}-1}= \mathbf R_{N+n}|_{X=\zeta_{p^n}-1}$.

Then, naturally we would hope for the following:
Let $\chi$ be a finite character of $\Gamma$ satisfying $\chi(\gamma)=\zeta_{p^{n}}$. We may also consider it as a character of $\Gal(F_n/F)$.
It is not hard to see that assuming $S_{tor}^{\chi}$ is finite, $\det(\mathbf R_{N+n}|_{X=\zeta_{p^n}-1})=0$ if and only if $\Sel_p(E[p^{\infty}]/F_n)^{\chi}$ is infinite. Since $\bfLalg^{\sharp}=\det(\tilde\bfL^{\sharp}(E))$ and $\bfLalg^{\flat}=\det(\tilde\bfL^{\flat}(E))$ are in $\Lambda$, and therefore have a finite number of roots, we would hope that it implies that $\Sel_p(E[p^{\infty}]/F_n)^{\chi}$ is infinite for a finite number of characters $\chi$. But, the author finds it a little difficult to show that because we may have $\det R_{N+n}|_{X=\zeta_{p^n}-1}=0$ even when $\bfLalg^{\sharp}(\zeta_{p^n}-1)\not=0$ and $\bfLalg^{\flat}(\zeta_{p^n}-1)\not=0$.

Instead, we make a more modest claim:

\begin{proposition}			\Label{DDT}
Suppose $\bfLalg^{\sharp}$ and $\bfLalg^{\flat}$ are not 0, and $a_p$ and $\frac{\beta_2}{\beta_1}$ are divisible by $p^T$ for some $T>0$. Suppose $\chi$ is a primitive character of $\Gamma_n$ for sufficiently large $n$. Also, suppose that

\begin{enumerate}[(a)]
\item if $n$ is odd, $p^S\nmid \bfLalg^{\sharp}$ for some $S$ with $S+\ds \frac{ep}{(p-1)^2}<T$, or

\item if $n$ is even, $p^{S'} \nmid \bfLalg^{\flat}$ for some $S'$ with $S'+\ds \frac{ep}{(p-1)^2}<T$.
\end{enumerate}
Then, $E(F_n)^{\chi}$ and $\Sha(E/F_n)[p^{\infty}]^{\chi}$ are finite.
\end{proposition}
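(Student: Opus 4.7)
The plan is to reduce the statement to the non-vanishing of $\det \mathbf R_{N+n}(\zeta_{p^n}-1)$, and then derive this non-vanishing from the matrix recursion combined with the divisibility hypothesis $p^{T}\mid a_p,\beta_2/\beta_1$. Indeed, the discussion immediately preceding Proposition~\ref{DDT} shows that, once $S_{tor}^{\chi}$ is finite (which holds for every primitive $\chi$ of sufficiently large conductor, since $S_{tor}$ is $\Lambda$-torsion and therefore has only finitely many exceptional characters), the $\chi$-part of $\Sel_p(E[p^{\infty}]/F_n)$ is finite exactly when $\det\mathbf R_{N+n}(\zeta_{p^n}-1)\ne 0$. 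Finiteness of $E(F_n)^{\chi}$ and of $\Sha(E/F_n)[p^{\infty}]^{\chi}$ then follows from Assumption~\ref{Assumption Torsions} and the usual Selmer exact sequence.

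For the non-vanishing, I would iterate the recursion of Proposition~\ref{Montana}. Let $M_i=\bigl[\begin{smallmatrix}pA_{N+i}&-A'_{N+i}\Phi_i\\I_e&0\end{smallmatrix}\bigr]$ and $\mathcal E_i=\bigl[\begin{smallmatrix}0&-\Phi_iI_e\\I_e&0\end{smallmatrix}\bigr]$. The hypothesis $p^T\mid a_p,\beta_2/\beta_1$, combined with the congruence $A'_{N+i}\equiv I_e\pmod p$ from Proposition~\ref{Mark IV}, forces every entry of the error $\Delta_i:=M_i-\mathcal E_i$ to carry $p$-adic valuation at least $T$. A short induction computes the idealized product
\[
\mathcal E_{2k}\cdots\mathcal E_1 = \begin{bmatrix}\prod_{j=1}^{k}(-\Phi_{2j})\,I_e & 0 \\ 0 & \prod_{j=1}^{k}(-\Phi_{2j-1})\,I_e\end{bmatrix},
\]
together with the analogous antidiagonal block form in odd length. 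Substituting $M_i=\mathcal E_i+\Delta_i$, expanding, and reading off the bottom $e$ rows of the identity $M_n\cdots M_1\bigl[\tilde \bfL^{\sharp};\tilde \bfL^{\flat}\bigr]\equiv\bigl[\mathbf R_{N+n+1};\mathbf R_{N+n}\bigr]\pmod{\omega_n}$ yields
\[
\mathbf R_{N+n}\equiv c_n\cdot \tilde \bfL^{\star}+\mathcal R_n\pmod{\omega_n},
\]
where $\star=\sharp$ when $n$ is odd and $\star=\flat$ when $n$ is even, $c_n$ is a product of $-\Phi_i$ over the indices $1\le i<n$ of the appropriate parity, and the residual $\mathcal R_n\in M_e(\Lambda)$ lies in $p^T$ times a polynomial in the $\Phi_j$'s.

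Finally I would evaluate at $X=\zeta_{p^n}-1$ and compare $p$-adic valuations. The standard cyclotomic computation $v_p(\Phi_i(\zeta_{p^n}-1))=p^{-(n-i)}$ for $1\le i<n$ gives a bound on $v_p(c_n(\zeta_{p^n}-1))$ uniform in $n$; taking the $e\times e$ determinant multiplies this bound by $e$. The main term $c_n(\zeta_{p^n}-1)^{e}\cdot \bfLalg^{\star}(\zeta_{p^n}-1)$ therefore has $p$-adic valuation at most $S+\frac{ep}{(p-1)^{2}}$ (using $v_p(\bfLalg^{\star})<S$), whereas the contribution of $\mathcal R_n$ to the determinant has valuation strictly greater than $T$ after controlling the cyclotomic factors it drags along. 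The strict inequality $S+\frac{ep}{(p-1)^{2}}<T$ then forces the main term to dominate, so $\det\mathbf R_{N+n}(\zeta_{p^n}-1)\ne 0$ and the reduction in the first paragraph finishes the argument.

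The main obstacle will be the error bookkeeping hidden in the estimate of $\mathcal R_n$. When expanding $M_n\cdots M_1$ as $\prod\mathcal E_i$ plus terms containing at least one factor $\Delta_i$, each such term is $p^T$ times a product of $\mathcal E_j$'s which still carries cyclotomic factors $\Phi_j$, and one must verify that the extra cyclotomic valuation they contribute at $\zeta_{p^n}-1$ is absorbed by the slack $T-S-\frac{ep}{(p-1)^{2}}>0$. This is essentially a telescoping estimate in the spirit of Sprung's $\sharp/\flat$-bookkeeping, and the constant $\frac{ep}{(p-1)^{2}}$ in the hypothesis appears to be calibrated precisely to accommodate the maximal cyclotomic valuation that can arise from these auxiliary products.
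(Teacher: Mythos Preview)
Your approach is essentially the paper's: reduce to $\det\mathbf R_{N+n}(\zeta_{p^n}-1)\ne 0$, use the divisibility hypothesis on $a_p$ and $\beta_2/\beta_1$ to get $M_i\equiv\mathcal E_i$, compute the antidiagonal product $\mathcal E_{n-1}\cdots\mathcal E_1$, and compare $p$-adic valuations at $\zeta_{p^n}-1$. The indexing, the parity split, and the bound $v_p(c_n)<p/(p-1)^2$ all match.

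One comment: the ``main obstacle'' you flag is not an obstacle. The paper does not expand the product and track the individual error terms at all. It simply observes that, after evaluating at $X=\zeta_{p^n}-1$, every entry of $M_i$ and of $\tilde\bfL^{\sharp},\tilde\bfL^{\flat}$ lies in the ring $\Zp[\zeta_{p^n}]$, and that $M_i\equiv\mathcal E_i\pmod{p^T}$ as matrices over that ring (this uses $p^{T-1}\mid A_{N+i}$ and $A'_{N+i}\equiv I_e\pmod{p^T}$, which follow from $p^T\mid a_p,\beta_2/\beta_1$ via the proof of Proposition~\ref{Mark IV}). Congruence modulo $p^T$ is then preserved under matrix multiplication and under the determinant, so immediately
\[
\det\mathbf R_{N+n}(\zeta_{p^n}-1)\;\equiv\; a^e\,\bfLalg^{\star}(\zeta_{p^n}-1)\pmod{p^T},
\]
with no bookkeeping required. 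Your worry that the cyclotomic factors dragged along by the error terms might lower the valuation is in fact backwards: each $\Phi_j(\zeta_{p^n}-1)$ for $j<n$ has \emph{positive} $p$-adic valuation, so those factors only push the error deeper into $p^T\Zp[\zeta_{p^n}]$, never out of it. Once you see this, your $\mathcal R_n$ satisfies $\mathcal R_n(\zeta_{p^n}-1)\in p^T M_e(\Zp[\zeta_{p^n}])$ on the nose, and the comparison of valuations is a one-line inequality rather than a telescoping estimate.
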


\begin{proof}
First, we note $p^{T-1}|B_{i}$ and $p^{T-1}|B_{i}'$ for each $i$ where $B_{i}, B_{i}'$ are the matrices in the proof of Proposition~\ref{Mark IV}, thus $p^{T-1}|A_{i}$ and $A_{i}'\equiv I_e \pmod{p^T}$. Then, we can choose $\tilde A_i, \tilde A_i'$ so that $p^{T-1}|\tilde A_i, \tilde A_i' \equiv I_e \pmod{p^T}$.

Thus, if $n$ is odd, for $\zeta_{p^n}=\chi(\gamma)$,

\begin{multline*}
\left.		\begin{bmatrix} p \tilde A_{N+n-1}	&	-\tilde A'_{N+n-1}  \Phi_{n-1}   \\
I_e	&	0	\end{bmatrix}
\cdot \cdots \cdot
\begin{bmatrix} p \tilde A_{N+2} 	 &	-\tilde A'_{N+2} 	 \Phi_2   \\
I_e	&	0	\end{bmatrix}		
\cdot		
\begin{bmatrix} p \tilde A_{N+1}  &	-\tilde A'_{N+1}  \Phi_1   \\
I_e	&	0	\end{bmatrix} 	\right|_{X=\zeta_{p^n}-1}\\
\equiv 	
\begin{bmatrix} 0	& -\Phi_{n-1}(\zeta_{p^n}-1) I_e	\\	I_e&0	\end{bmatrix} \cdot \cdots \cdot \begin{bmatrix} 0	& -\Phi_1(\zeta_{p^n}-1) I_e		\\	I_e&0	\end{bmatrix}
=
\begin{bmatrix} a I_e&0	\\ 0&bI_e \end{bmatrix}			\pmod{p^T}
\end{multline*}
for some $a,b$ with $v_p(a), v_p(b) < p/(p-1)^2$, and if $n$ is even, it is congruent to $\begin{bmatrix} 0&a I_e \\ b I_e &0 \end{bmatrix}$.

Then, in case (a), $a \tilde \bfL^{\sharp}(\zeta_{p^n}-1)\equiv \mathbf R_{N+n}(\zeta_{p^n}-1) \pmod {p^T}$, and in case (b), $a\tilde \bfL^{\flat}(\zeta_{p^n}-1)\equiv  \mathbf R_{N+n}(\zeta_{p^n}-1) \pmod {p^T}$. If $n$ is sufficiently large, $v_p(a^e \bfLalg^{\sharp}(\zeta_{p^n}-1))<T$ and $v_p(a^e \bfLalg^{\flat}(\zeta_{p^n}-1))<T$ respectively by our assumption, thus $\det(\mathbf R_{N+n}(\zeta_{p^n}-1)) \not\equiv 0 \pmod{p^T}$, and also $S_{tor}^{\chi}$ is finite for a sufficiently large $n$. Thus our claim follows.
\end{proof}

Then we immediately have:

\begin{theorem}			\Label{DDR}
Suppose 

\begin{enumerate}
\item $a_p$ and $\frac{\beta_2}{\beta_1}$ are divisible by $p^T$ for some $T>0$,

\item $p^S\nmid \bfLalg^{\sharp}, p^{S} \nmid \bfLalg^{\flat}$ for some $S$ with $S+\ds \frac{ep}{(p-1)^2}<T$.
\end{enumerate}
Then, $E(F_{\infty})/E(F_{\infty})_{tor}$ is a group of finite rank, and $\Sha(E/F_n)[p^{\infty}]^{\chi}$ is finite for all sufficiently large $n$, and every primitive character $\chi$ of $\Gal(F_n/F)$.
\end{theorem}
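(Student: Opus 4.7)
The plan is to deduce Theorem~\ref{DDR} as a direct consequence of Proposition~\ref{DDT}, supplemented by a standard character-decomposition argument along the tower $F_\infty/F$.

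First I would verify that Proposition~\ref{DDT} can be applied uniformly in $n$. The hypothesis $p^S \nmid \bfLalg^{\sharp}$ supplies case~(a) of Proposition~\ref{DDT} when $n$ is odd, while $p^S \nmid \bfLalg^{\flat}$ (with the same $S$) supplies case~(b) when $n$ is even; both power series are tacitly nonzero, since otherwise $p^S$ would divide them for every $S$. Together with the common hypothesis $p^T \mid a_p,\, p^T \mid \beta_2/\beta_1$ and $S+ep/(p-1)^2<T$, Proposition~\ref{DDT} therefore produces a threshold $n_0$ such that for every $n \geq n_0$ and every primitive character $\chi$ of $\Gamma_n=\Gal(F_n/F)$, both $E(F_n)^{\chi}$ and $\Sha(E/F_n)[p^{\infty}]^{\chi}$ are finite. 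This already yields the second assertion of the theorem.

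Next I would convert the $\chi$-by-$\chi$ finiteness into a rank bound on $E(F_\infty)$ via the character decomposition of the Mordell--Weil group. Setting $V=E(F_\infty)\otimes_{\Z}\overline{\Q}$, the $\Gamma$-action on $V$ is locally finite (every element lies in some $E(F_n)$ with $\Gamma_n$ cyclic of order $p^n$), so $V$ is semisimple as a $\overline{\Q}[\Gamma]$-module and splits canonically as
\[ V \;=\; \bigoplus_{m \geq 0}\;\bigoplus_{\substack{\chi\text{ primitive}\\\text{of }\Gamma_m}}\bigl(E(F_m)\otimes_{\Z}\overline{\Q}\bigr)^{\chi}. \]
For $m \geq n_0$ the inner summand vanishes by the previous step, and for each $m<n_0$ the set of primitive characters of $\Gamma_m$ is finite, each eigenspace being of dimension at most $\rank_{\Z}E(F_{n_0})<\infty$. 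Summing over the finitely many $m<n_0$ bounds $\dim_{\overline{\Q}} V$, hence the rank of $E(F_\infty)/E(F_\infty)_{tor}$; the torsion is already finite by Assumption~\ref{Assumption Torsions}.

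The real content is concentrated entirely in Proposition~\ref{DDT}; in the deduction itself, the only point requiring care is that the non-divisibility hypothesis must be imposed on \emph{both} $\bfLalg^{\sharp}$ and $\bfLalg^{\flat}$ simultaneously, since the alternative in Proposition~\ref{DDT} depends on the parity of $n$ and we need \emph{every} sufficiently large $n$ to be covered in order to make the character decomposition collapse to a finite sum. Apart from this, the argument is essentially bookkeeping.
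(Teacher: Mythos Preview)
Your proposal is correct and matches the paper's own argument, which simply records ``Then we immediately have'' after Proposition~\ref{DDT}; you have spelled out the character-decomposition step that the paper leaves implicit. One small remark: your closing sentence invoking Assumption~\ref{Assumption Torsions} is unnecessary (the theorem only asserts finite rank \emph{modulo} torsion) and slightly imprecise (that assumption is local, about $E(F_{\infty,\mfp})_{tor}$), but this does not affect the proof.
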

We note that it is often relatively easy to show that $E(F_{\infty})$ has a finite number of $p$-power torsions.

\bigskip
\end{subsection}

\begin{subsection}{Appendix: Sprung's $\sharp/\flat$-Selmer groups}
\Label{Appendix}

\hfill

Even though we do not use them in this paper, using the points constructed in Section~\ref{AlphaGo}, we can construct $\Sel_p^{\sharp}(E/F_{\infty})$ and $\Sel_p^{\flat}(E/F_{\infty})$ as Sprung did (\cite{Sprung}).

\begin{definition}[Perrin-Riou map]
\begin{enumerate}
\item  Let $(\cdot, \cdot)_{N+n}$ denote the following pairing given by the local class field theory:

\[ (\cdot, \cdot)_{N+n}: H^1(\Qp(\pi_{N+n}), T_p) \times H^1(\Qp(\pi_{N+n}), T_p) \to \Zp.\]

Recall $\Gamma_n=\Gal(F_n/F) \cong \Gal(\Qp(\pi_{N+n})/ \Qp(\pi_N))$, $\Gamma=\varprojlim \Gamma_n$, and $\Lambda=\Zp[[\Gamma]]\cong \Zp[[X]]$ (non-canonically).
For $z \in H^1(\Qp(\pi_{N+n}), T_p)$ and $x = [x_1,\cdots, x_e]^t \in E(\Qp(\pi_{N+n}))^e$,

\[ \bfP_{N+n, x}(z)\stackrel{def}= \begin{bmatrix} \sum_{\sigma \in \Gamma_n} (z, x_1^{\sigma})_{N+n} \cdot \sigma	\\
\sum_{\sigma \in \Gamma_n} (z, x_2^{\sigma})_{N+n} \cdot \sigma \\
\vdots \\
\sum_{\sigma \in \Gamma_n} (z, x_e^{\sigma})_{N+n} \cdot \sigma
\end{bmatrix} 
\in \Zp[\Gamma_n]^e.\]

\item Also, let $\tilde \bfP_{N+n, x}(z)$ denote its lifting to $\Zp[\Gamma_{n+1}]^e$.
\end{enumerate}
\end{definition}

\begin{notation}
\begin{enumerate}
\item Let $x_{N+n}$ denote

\[ x_{N+n}=[ P(\pi_{N+n, 1}),		\cdots,	P(\pi_{N+n, e}) ]^t.
\]

\item Let $\Proj_{n/m}$ denote the natural projection from $\Zp[\Gamma_n]$ to $\Zp[\Gamma_m]$.
\end{enumerate}
\end{notation}

\bigskip

By Proposition~\ref{Mark IV}, for any $z=(z_n) \in \varprojlim_{n \geq N} H^1(\Qp(\pi_n), T_p)$,

\[ \Proj_{n+1/n} \begin{bmatrix} \bfP_{N+n+1, x_{N+n+1}}	(z_{N+n+1})		\\ \tilde \bfP_{N+n, x_{N+n}} (z_{N+n})		\end{bmatrix} = 
\begin{bmatrix} pA_{N+n} &	-A'_{N+n} \Phi_n  \\
I_e	&	0	\end{bmatrix} \cdot
\begin{bmatrix}	\bfP_{N+n, x_{N+n}}	(z_{N+n})		\\
\tilde\bfP_{N+n-1, x_{N+n-1}}	(z_{N+n-1})
\end{bmatrix}
\]

Following Sprung (\cite{Sprung}), we can define the following:

\begin{definition}

From the previous section we recall the liftings $\tilde A_{N+n}, \tilde A'_{N+n} \in M_e(\Lambda)$ of $A_{N+n}, A'_{N+n} \in M_e(\Lambda_n)$ for every $n$.

For $z=(z_n) \in \varprojlim_{n \geq N} H^1(\Qp(\pi_n), T_p)$,
\begin{eqnarray*} \begin{bmatrix}		\Col^{\sharp}(z)	\\	\Col^{\flat}(z)	\end{bmatrix} 
&\stackrel{def}=&	\varprojlim_n
 \begin{bmatrix} p \tilde A_{N+1} &	-\tilde A'_{N+1} \Phi_1  \\
I_e	&	0	\end{bmatrix}^{-1}
 \cdot 
\begin{bmatrix} p \tilde A_{N+2} &	-\tilde A'_{N+2} \Phi_2  \\
I_e	&	0	\end{bmatrix}^{-1}
 \cdot \\
&& \cdots\quad \cdot
\begin{bmatrix} p \tilde A_{N+n} &	-\tilde A'_{N+n} \Phi_n  \\
I_e	&	0	\end{bmatrix}^{-1} \cdot
\begin{bmatrix} \bfP_{N+n+1, x_{N+n+1}}(z_{N+n+1})		\\ \tilde \bfP_{N+n, x_{N+n}}(z_{N+n})		\end{bmatrix}.
\end{eqnarray*}
\end{definition}

Similar to Proposition~\ref{GagConcert}, we can show $\Col^{\sharp}(z), \Col^{\flat}(z) \in \Lambda^e$. We omit its proof.

\begin{definition}		\Label{Sharp Distinction}

We recall the definition of the relaxed Selmer group $\Selr$ from Definition~\ref{Relaxed Selmer}.
We define

\[ \Sel_p^{\sharp}(E[p^{\infty}] /F_{\infty}) \stackrel{def}= \ker\left( \Selr(E[p^{\infty}] /F_{\infty}) \to \ds \frac {H^1(F_{\infty, \mfp}, E[p^{\infty}])}{\left( \ker \Col^{\sharp}\right)^{\perp}} \right) \]
where $\left( \ker \Col^{\sharp}\right)^{\perp}$ denotes the orthogonal complement of $\ker \Col^{\sharp}$ with respect to the local pairing $\varprojlim_n H^1(\Qp(\pi_n), T_p) \times H^1(\Qp(\pi_{\infty}), E[p^{\infty}])\to \Qp/\Zp$.

Similarly, we define $\Sel_p^{\flat}(E/F_{\infty})$.
\end{definition}

It seems likely that $\Sel_p^{\sharp}(E[p^{\infty}]/F_{\infty})$ and $\Sel_p^{\flat}(E[p^{\infty}]/F_{\infty})$ are $\Lambda$-cotorsion under some suitable assumptions. In fact, we can imagine

\begin{eqnarray}		\Label{Speculation One} 
char(\Sel_p^{\sharp}(E[p^{\infty}] /F_{\infty})^{\vee})=char(S_{tor})\cdot (\bfLalg^{\sharp}),
\end{eqnarray}

\begin{eqnarray}	\Label{Speculation Two}
(\text{resp.} \quad char(\Sel_p^{\flat}(E[p^{\infty}] /F_{\infty})^{\vee})=char(S_{tor}) \cdot (\bfLalg^{\flat}).)
\end{eqnarray}

But, the ways that $\Sel_p^{\sharp/\flat}$ and $\bfLalg^{\sharp/\flat}$ are defined seem to be dual to each other. Thus, we suspect that to prove such an equality, we may need some kind of self-duality (similar to the Tate local duality) of the local conditions such as the one proven in \cite{Kim-1}. The author cannot say with certainty that such self-duality exists for the local conditions in Definition~\ref{Sharp Distinction}, but, an analogous result has been proven for a different but related Selmer group (\cite{Lei-Ponsinet}), and the author is hopeful that equalities such as (\ref{Speculation One}) and (\ref{Speculation Two}) will be proven soon.

\end{subsection}

\end{section}

\bigskip

\end{document}